\documentclass[10pt]{article}
\usepackage{graphicx} 
\usepackage[margin=1in]{geometry}

\usepackage{hyperref}
\usepackage{amsmath}
\usepackage{amssymb}
\usepackage{amsfonts}
\usepackage{mathtools}
\numberwithin{equation}{section}
\usepackage{bm}
\usepackage{amsthm}
\usepackage{mathrsfs}
\newtheorem{theorem}{Theorem}[section]
\newtheorem{proposition}[theorem]{Proposition}
\newtheorem{lemma}[theorem]{Lemma}

\newcommand{\neqm}{n_{\neq}}
\newcommand{\ceqm}{c_{\neq}}
\newcommand{\Lin}{\mathcal L}
\newcommand{\Non}{\mathcal N}
\usepackage{amsthm}

\theoremstyle{definition}
\newtheorem{definition}{Definition}[section]

\theoremstyle{remark}

\title{Suppression of Blow-up in Two-dimensional Keller–Segel Systems by Stochastic Couette Flows
}
\author{Fanze Kong\footnote{Department of Applied Mathematics, University of Washington, Seattle, 98195, WA, USA, email address: {fzkong@uw.edu}.}~  and Krutika Tawri\footnote{Department of Applied Mathematics, University of Washington, Seattle, 98195, WA, USA, email address: {ktawri@uw.edu}.} }

\date{}

\begin{document}

\maketitle

 \begin{abstract}
We study Keller--Segel systems on $\mathbb R^2$ advected by
a stochastic Couette flow.
We prove that sufficiently strong mixing suppresses chemotactic
blow-up almost surely, yielding a unique global mild solution
for initial data of arbitrary mass.
To prove this result, we construct maximal local mild solutions using
a-priori estimates for the random solution operator of passive
scalar transport equations and a fixed-point argument.
Global existence then follows from a decomposition of time
into blocks, a bootstrap argument and the first
Borel--Cantelli lemma.
\\
\\
{\sc 2020 MSC}: {Primary: 35B99, 35A01; Secondary: 35Q35, 35Q92, 35R60.}\\
{\sc Keywords:} Stochastic Chemotaxis-fluid Models; Mild Solutions; Mixing Effects; Enhanced Dissipation; Global Existence.
\end{abstract}

\section{Introduction}
In this paper, we study the following two-dimensional Keller--Segel models driven by stochastic Couette noise:
\begin{align}\label{eq1}
\left\{
\begin{array}{ll}
dn+\sqrt{A}y\partial_x n\circ dW_t
=
\Delta n\,dt
-\nabla\cdot(n\nabla c)\,dt,&(x,y)\in\Omega,t>0,
\\
-\Delta c=n,&(x,y)\in\Omega,t>0,\\
n(x,0)=n_{\rm in}(x)\ge ,\not\equiv 0,
& (x,y)\in\Omega,
\end{array}
\right.
\end{align}
where \(n\) denotes the cell density, \(c\) is the concentration of the chemical signal and $\Omega=\mathbb{T}\times\mathbb{R},
$ with \(\mathbb{T}\) representing the one-dimensional torus  defined by $
\mathbb{T}=\mathbb{R}/(2\pi\mathbb{Z}).$  Here, \(n_{\rm in}\) is the initial data, \(A>0\) characterizes the strength of the Couette noise, \((\mathfrak O,\mathcal F,(\mathcal F_t)_{t\ge0},\mathbb P)\) is a complete filtered probability space supporting a standard real-valued Brownian motion \(W_t\) adapted to \((\mathcal F_t)_{t\ge0}\) and the stochastic term $\partial_x n \circ dW_t$ is interpreted in the Stratonovich sense.

System \eqref{eq1} can be viewed as an extension of the classical Patlak–Keller–Segel (PKS) system \cite{patlak1953random,Keller1970,Keller1971}. When the fluid advection is absent, the equations for the cell density and the chemoattractant reduce to the classical PKS system. The latter serves as a paradigm for the mathematical description of chemotaxis, a process ubiquitous in various physiological and pathological phenomena, such as wound healing, tumor growth  and morphogenesis. Its analysis has been extensively developed; see, for instance, the surveys \cite{hillen2009user,horstmann2003,horstmann2004}.

A fundamental feature of the two-dimensional PKS system is the critical-mass phenomenon. Let $M:=\int_{\mathbb{R}^{2}} n(x,0)\,dx.$
If \(M<8\pi\), free-energy solutions exist globally in time \cite{blanchet2006two,dolbeault2004optimal}, whereas in the supercritical regime \(M>8\pi\), finite-time blow-up may occur \cite{nanjundiah1973,childress1981,herrero1996,senba2000some,wang2002steady}. At the critical mass \(M=8\pi\), global existence was established for radially symmetric solutions in \cite{biler20068pi} and for general solutions in \cite{velazquez2004point}. Infinite-time blow-up with finite second moment was further constructed in \cite{blanchet2008infinite}, and refined properties of blow-up profiles and their stability were investigated in \cite{davila2020existence}.

When chemotactic cell movement occurs in a fluid environment, the  fluid  advection can substantially alter the aggregation dynamics, and chemotaxis models coupled with fluid motion have been extensively studied; see, for instance, \cite{winkler2014stabilization,duan2010global,SimingHe,kong2024global}. In particular, sufficiently strong shear flows can enhance mixing and dissipation and, in suitable regimes, prevent finite-time blow-up \cite{KiselevXu2016,BedrossianHe2017,he2023enhanced}.

In some scenarios, the ambient flow may also be subject to random fluctuations arising from environmental noise. This has motivated the study of stochastic chemotaxis--fluid systems in recent years; see, for instance, \cite{zhai20202d,zhang2025keller,kong2026global}.  

In this paper, we study the stochastic Keller–Segel models \eqref{eq1} subject to stochastic shear flows, with particular emphasis on Couette flow.  We first present a formal derivation of  \eqref{eq1}  from a rapidly varying random Couette flow. Let $\xi:\mathbb{R}_{+}\times\mathfrak O\rightarrow\mathbb{R}$  be a stationary, mean-zero and  strongly mixing stochastic process. Thus, $\mathbb{E}\xi(t)=0$,
$t\ge0.$   
   For $\xi>0,$ define $W_t^\varepsilon 
:=\frac{1}{\sqrt{\varepsilon}}
\int_0^t
\xi\!\left(\frac{s}{\varepsilon}\right)\,\mathrm ds=
\sqrt{\varepsilon}\int_0^{t/\varepsilon}\xi(r)\,dr.$  Then $\frac{d}{dt}W_t^\varepsilon
=
\frac{1}{\sqrt{\varepsilon}}
\xi\left(\frac{t}{\varepsilon}\right).$  Now, we consider the following Keller--Segel system with advection by a random Couette flow:
\begin{equation}\label{eq:fast-random-shear}
\begin{cases}
\displaystyle
\partial_t n^\varepsilon
+
\sqrt{\frac{A}{\varepsilon}}\,
\xi\left(\frac{t}{\varepsilon}\right)
y\partial_x n^\varepsilon
=
\Delta n^\varepsilon
-\nabla\cdot(n^\varepsilon\nabla c^\varepsilon),
& (x,y)\in\Omega, t>0,
\\[2mm]
-\Delta c^\varepsilon=n^\varepsilon,
& (x,y)\in\Omega, t>0,\\
n(x,0)=n_{\rm in}(x)\ge ,\not\equiv 0,&(x,y)\in\Omega.
\end{cases}
\end{equation}
Suppose that \(\xi\) satisfies the functional central limit theorem  $W^\varepsilon
\Rightarrow
\sigma W$ in $C([0,T])$ as \(\varepsilon\to0\) for every \(T<\infty\) with \(W\) being a standard Brownian motion and \(\sigma>0\).   By using the Green--Kubo formula, we have  that for a stationary process with integrable covariance, the effective variance is formally given by  $\sigma^2
=
2\int_0^\infty
\operatorname{Cov}(\xi(0),\xi(s))\,ds.$
After a  normalization of \(\xi\), we take \(\sigma=1\).  Noting that \(W^\varepsilon\) has absolutely continuous sample paths, we invoke the Wong--Zakai principle to formally arrive at (\ref{eq1}) in the limit \(\varepsilon\to0\).

 As discussed above, deterministic shear flows can suppress blow-up in Keller--Segel systems with fluid advection. This naturally raises the question of whether the enhanced-dissipation mechanism persists under stochastic advection, such as that induced by the stochastic Couette flow in \eqref{eq1}.

To address this question and facilitate the subsequent analysis, we introduce the rescaled time \(\tilde t:=t/A\) and Brownian motion \(\tilde W_t:=\sqrt{A}\,W_{t/A}\). Dropping the tildes for simplicity, we obtain the following rescaled form of \eqref{eq1} in \(\Omega\):
\begin{align}\label{eq:KS-local}
\left\{
\begin{array}{ll}
dn+y\partial_x n\circ dW_t
=
\nu\Delta n\,dt
-\nu\nabla\cdot(n\nabla c)\,dt,&(x,y)\in\Omega,t>0,\
\\
-\Delta c=n,&(x,y)\in\Omega,t>0,\\
n(x,0)=n_{\rm in}(x)\ge ,\not\equiv 0,
& (x,y)\in\Omega,
\end{array}
\right.
\end{align}
where $\nu:=\frac{1}{A}.$    Our goal is to determine whether sufficiently strong shear can suppress blow-up and ensure global well-posedness of \eqref{eq:KS-local} in two dimensions for sufficiently small \(\nu\).

\subsection{Main results}
In this subsection, we state our main result concerning the global existence of mild solutions to \eqref{eq:KS-local}.   We first introduce the random solution operator $S_\nu(t,s)$
associated with the linear equation:
\begin{equation}\label{eq:passive-local}
 \,d f+y\partial_xf\circ\,d W_t=\nu\Delta f\,\,d t,\qquad f(x,y,s)=f_s.
\end{equation}
Pathwise, $S_\nu(t,r)S_\nu(r,s)=S_\nu(t,s)$, $s\le r\le t.$
We say that a mild solution to \eqref{eq:KS-local} on $[0,\tau)$ satisfies
\begin{equation}\label{eq:untruncated-mild-local}
 n(\cdot,t)=S_\nu(t,0)n_{\rm in}
 -\nu\int_0^tS_\nu(t,s)\nabla\cdot(n(\cdot,s)\nabla c(\cdot,s))\,\,d s,
 \qquad-\Delta c(\cdot,s)=n(\cdot,s).
\end{equation}
Let $f_0$ and $f_{\neq}$ denote the orthogonal projections of $f$
onto the zero and nonzero Fourier modes in $x$, respectively:
\begin{equation}\label{f0-def}
  f_0(y,t):=\frac{1}{2\pi}\int_{-\pi}^{\pi}f(x,y,t)\,dx,
  \qquad
  f_{\neq}(x,y,t):=f(x,y,t)-f_0(y,t).
\end{equation}
Noting that the zero Fourier mode in $x$ of the Poisson equation
$-\Delta c=n$ on $\Omega=\mathbb T\times\mathbb R$ satisfies
$-\partial_y^2c_0=n_0$, we impose the normalization
\begin{equation}\label{eq:poisson-normalization-local}
  \partial_y c_0(y,t)
  =\frac12\int_{\mathbb R} n_0(z,t)\,dz
  -\int_{-\infty}^y n_0(z,t)\,dz.
\end{equation}
Fix $p>2$ and define
\begin{equation}\label{eq:X-definition}
 X=L^1(\Omega)\cap L^p(\Omega),\qquad
 \Vert{f}\Vert_{X}=\Vert{f}\Vert_{L^1(\Omega)}+\Vert{f}\Vert_{L^p(\Omega)}.
\end{equation}
With the normalization and notations shown above, we now define a local mild solution to \eqref{eq:KS-local}, which is
\begin{definition}[Local mild solution]\label{def:local-mild}
Let $\tau>0$ be a stopping time and $n$ be an adapted process
with paths in $C([0,\tau);X)$.
The pair $(n,\tau)$ is called a local mild solution to
\eqref{eq:KS-local} if there exist stopping times $\rho_j\uparrow\tau$,
with $\rho_j<\tau$ on $\{\tau<\infty\}$, such that
\eqref{eq:untruncated-mild-local} holds on $[0,\rho_j]$ and,
for every $T<\infty$ and $q\ge2$,
\[
 \mathbb E\sup_{0\le t\le T}\Vert{n(t\wedge\rho_j)}\Vert_{X}^q<\infty,
\]
where $X$ is given in \eqref{eq:X-definition}.
\end{definition}
By extending the stopping time $\tau$, we define a maximal local mild solution as follows:
\begin{definition}[Maximal local mild solution]
\label{def:maximal-local}
A local mild solution $(n,\tau)$ to \eqref{eq:KS-local} given in Definition \ref{def:local-mild} is called a maximal local mild solution if
\begin{equation}\label{eq:X-blowup-definition}
 \limsup_{t\uparrow\tau}\Vert {n}\Vert_{X_t}
 =
 \infty
 \qquad
 \text{on }\{\tau<\infty\},
 \quad \mathbb P\text{-almost surely},
\end{equation}
where $X_t:=C([0,t];X)$ and $
\Vert{u}\Vert_{X_t}
:=
\sup_{0\le s\le t}\Vert{u(s)}\Vert_{X}$ with $X$ shown in \eqref{eq:X-definition}.
\end{definition}
We first establish the existence of a maximal local mild solution to \eqref{eq:KS-local} in the sense of Definition~\ref{def:maximal-local}. More precisely, we have the following result:
\begin{theorem}[Local existence]
\label{thm:local-complete}
Assume that $n_{\rm in}$ is deterministic and
\begin{equation}\label{eq:initial-local}
 n_{\rm in}\ge0,~
 n_{\rm in}\in H^2(\Omega)\cap L^\infty(\Omega)\cap L^1_2(\Omega),~ L^1_2(\Omega)=\{f:(1+y^2)f\in L^1(\Omega)\}.
\end{equation}
For $0<\nu\le1$, system \eqref{eq:KS-local} has a unique maximal local mild solution
$(n,\tau_*)$ in the sense of Definition \ref{def:maximal-local}. It is nonnegative and for $t<\tau_*,$ $\Vert{n}\Vert_{L^1(\Omega)}=\Vert{n_{\rm in}}\Vert_{L^1(\Omega)}=:M.$
Moreover, for every fixed path outside one null set and $T<\tau_*$,
\begin{equation}\label{eq:regularity-conclusion}
 n\in C([0,T];H^1(\Omega)\cap L^1(\Omega))\cap L^2(0,T;H^2(\Omega)),\qquad
 \sup_{0\le t\le T}\Vert{n(t)}\Vert_{L^\infty(\Omega)}<\infty.
\end{equation}
Moreover,
\begin{equation}\label{eq:Linfty-blowup-local}
 \{\tau_*<\infty\}\subseteq
 \{\limsup_{t\uparrow\tau_*}\Vert{n(t)}\Vert_{L^\infty(\Omega)}=\infty\}.
\end{equation}
\end{theorem}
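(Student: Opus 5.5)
The plan is to reduce \eqref{eq:KS-local} to a random PDE with explicit solution operator, then run a truncated fixed point in $X$, glue the local solutions into a maximal one, and upgrade regularity pathwise.

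\textbf{Step 1: explicit form of $S_\nu$.} Because the Couette drift $y\partial_x$ is linear, I will make a pathwise change of variables. On Fourier modes in $x$, write $\hat f(k,y,t)$ and set $g(k,y,t)=e^{iky W_t}\hat f(k,y,t)$. This removes the Stratonovich transport term and leaves the random heat equation
\[
\partial_t g=\nu\bigl(-k^2 g+(\partial_y-ikW_t)^2 g\bigr).
\]
Its kernel is Gaussian and can be written explicitly. From the kernel I will read off pathwise bounds, uniform in $\nu\le1$ and valid for $0\le s<t\le T$:
\[
\|S_\nu(t,s)f\|_{L^q}\le\|f\|_{L^q},
\qquad
\|S_\nu(t,s)\nabla\cdot F\|_{L^q}\le C(T,W)\,(\nu(t-s))^{-\frac12-\frac1r+\frac1q}\|F\|_{L^r},\quad 1\le r\le q\le\infty.
\]
The constant depends on $\sup_{[0,T]}|W|$, because $\partial_x$ commutes with $S_\nu$ while $\partial_y$ acquires a factor $k(W_t-W_s)$. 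Since the $x$-derivative sits in front of $S_\nu$, I control it by writing
\[
S_\nu(t,s)\partial_x=\partial_x S_\nu(t,s),
\]
and using parabolic smoothing in $x$. The $y$-derivative gains the shear correction, which is bounded by $|W_t-W_s|$ times an $x$-derivative.

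\textbf{Step 2: local fixed point.} The drift is $\nabla c=\nabla(-\Delta)^{-1}n$, with the zero mode fixed by \eqref{eq:poisson-normalization-local}. I will bound it in $L^\infty$ by $\|n\|_{L^1}+\|n\|_{L^p}$, using $p>2$ and Hardy--Littlewood--Sobolev/Biot--Savart-type estimates on $\mathbb T\times\mathbb R$. Then $\|n\nabla c\|_{L^r}\lesssim\|n\|_X^2$ for $r\in\{1,p\}$.

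To obtain adapted solutions with moment bounds, I will work with a truncated nonlinearity $\theta_R(\|n\|_{X_t})\nabla\cdot(n\nabla c)$, where $\theta_R$ is a cutoff, and with a stopping time $\sigma_K=\inf\{t:\sup_{[0,t]}|W|\ge K\}$. On $[0,\sigma_K\wedge T_0]$ the map
\[
\Phi(n)=S_\nu n_{\rm in}-\nu\int_0^t S_\nu\nabla\cdot\bigl(\theta_R\,n\nabla c\bigr)\,ds
\]
is a contraction in $L^q(\mathfrak O;C([0,T_0];X))$ for a deterministic small $T_0=T_0(R,K)$. The time singularity is integrable because $\tfrac12+\tfrac1r-\tfrac1q<1$ for $r=q$. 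Adaptedness follows since $S_\nu(t,s)$ is $\sigma(W_r-W_s:r\le t)$-measurable. Iterating over consecutive intervals gives global truncated solutions $n_{R,K}$.

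Define
\[
\tau_{R,K}=\inf\{t:\|n_{R,K}\|_{X_t}\ge R\}\wedge\sigma_K.
\]
Uniqueness of the truncated problem shows these solutions are consistent on $[0,\tau_{R,K}]$, so they glue together. Taking $\tau_*=\lim_{R,K\to\infty}\tau_{R,K}$ and $\rho_j=\tau_{j,j}\wedge j$ gives the local mild solution in the sense of Definition \ref{def:local-mild}. Maximality \eqref{eq:X-blowup-definition} holds because on $\{\tau_*<\infty\}$ we have $\sigma_K\to\infty$, hence $\|n\|_{X_t}\ge R$ eventually for every $R$. Pathwise uniqueness follows by a Gronwall argument on common stopping intervals.

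\textbf{Step 3: pathwise regularity, sign and mass.} For fixed $\omega$ and $T<\tau_*$, I show that $n$ coincides with the classical solution of the transformed random PDE from Step 1. The argument is to use $n_{\rm in}\in H^2$ and a fixed point in $C([0,T'];H^1)\cap L^2H^2$ for the conjugated equation, whose coefficients are continuous in $t$, and then identify it with the mild solution by uniqueness in $X$. From the classical solution:
\begin{itemize}
\item Nonnegativity follows from the maximum principle.
\item Mass conservation follows by integrating, since the shear and divergence terms integrate to zero. The moment $L^1_2$ is used to justify decay at $|y|\to\infty$ and the integrability of $y\partial_x n$.
\item The $L^\infty$ bound follows from the mild formulation in $L^\infty$ with $\|\nabla c\|_{L^\infty}$ bounded, giving \eqref{eq:regularity-conclusion}.
\end{itemize}

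\textbf{Step 4: $L^\infty$ blow-up criterion.} For \eqref{eq:Linfty-blowup-local}, suppose $\sup_{t<\tau_*}\|n\|_{L^\infty}<\infty$ on part of $\{\tau_*<\infty\}$. Mass conservation and interpolation then give $\sup_{t<\tau_*}\|n\|_X<\infty$ there, which contradicts \eqref{eq:X-blowup-definition}.

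I expect the main obstacle to be Step 1. The estimate for $S_\nu\nabla\cdot$ with explicit dependence on the path, specifically the $\partial_y$ derivative not commuting with the random shear, requires care in the explicit kernel computation. The nonuniform constant $C(T,W)$ is exactly what forces the stopping time $\sigma_K$ in Step 2.
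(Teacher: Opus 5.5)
Your overall architecture matches the paper's: shear change of variables, explicit Gaussian kernel, truncated fixed point, exit times glued into a maximal solution, and pathwise regularity in the sheared frame. Two ingredients are handled differently.

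First, the paper never stops the Brownian path. Its divergence bound \eqref{eq:pathwise-divergence-bound} carries the factor $\Gamma_{t,s}$, which depends only on increments of $W$ after $s$. In the Duhamel estimate, $\mathbb E[(\Gamma^*_{s,T})^q\Vert F(s)\Vert_X^q]$ therefore factorizes by independence, and Doob's inequality bounds $\mathbb E(\Gamma^*_{s,T})^q$. This gives global cutoff solutions in $L^q(\mathfrak O;C([0,T];X))$ with a single cutoff $\theta_m$, and uniqueness comes from the same contraction in expectation on deterministic intervals. Your constant $C(T,W)$ depends on $\sup_{[0,T]}|W|$, which forces the second localization $\sigma_K$. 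This is most cleanly implemented by replacing $W$ with $W_{\cdot\wedge\sigma_K}$ inside $S_\nu$, so the map is defined on all of $[0,T_0]$. Your route is equally valid and needs no independence argument, at the price of a two-parameter family of exit times; since all your bounds are pathwise, your pathwise Gronwall uniqueness also works.

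Second, for regularity the paper does not construct a second solution. It upgrades the cutoff mild solution directly: $L^\infty$ comes from the $L^p\to L^\infty$ bound \eqref{eq:pathwise-Lp-Linfty-divergence}, and $L^\infty H^1\cap L^2H^2$ comes from an energy estimate plus a Gronwall bound on the difference quotients $D_h^jg$ in the sheared frame.

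Your Step 3 needs one more ingredient to reach every $T<\tau_*$. A fixed point for the quadratic problem in $C([0,T'];H^1)\cap L^2H^2$ only exists on $[0,T']$, with $T'$ depending on $\Vert n_{\rm in}\Vert_{H^1}$ and the path, and restarting may shrink the steps. You need an $H^1$ bound on $[0,T]$ controlled by $\sup_{[0,T]}\Vert n\Vert_X$, which is exactly what the paper's difference-quotient estimate supplies.

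A clean fix is to freeze $\widetilde c$ from the mild solution. The linear equation $\partial_tg=\nu\nabla\cdot(A_t\nabla g)-\nu\nabla\cdot(A_tg\nabla\widetilde c)$ is then globally solvable in $C([0,T];H^1)\cap L^2(0,T;H^2)$. Its solution coincides with $n$ by a singular Gronwall argument in $L^2$ applied to the Duhamel formula, using $\Vert\nabla c\Vert_{L^\infty}<\infty$ on $[0,T]$. This also spares you from proving a priori that the strong solution lies in $L^1$, which ``uniqueness in $X$'' would require.

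Finally, the paper's proof does not spell out nonnegativity, mass conservation, or \eqref{eq:Linfty-blowup-local}. Your interpolation argument for the blow-up criterion is the right one. Mass conservation follows directly from the mild formula: $\int_\Omega K_{t,s}=1$, $\int_\Omega\nabla K_{t,s}=0$, the shear is measure preserving, and $n\nabla c\in L^1$. So the $L^1_2$ hypothesis is not needed there.
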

Moreover, concerning the global well-posedness of \eqref{eq:KS-local}, we prove that there exists a random threshold \(A^*(\omega)\) such that, for almost every \(\omega\in\mathfrak O\), whenever \(A>A^*(\omega)\), the mild solution to \eqref{eq:KS-local} exists globally in time.  The corresponding results are summarized as follows:
\begin{theorem}
\label{thm:global-original}

Assume all conditions in Theorem \ref{thm:local-complete} hold.  Then, almost surely, there exists a finite random
threshold \(A_*(\omega)\) such that, for every integer
\(A\ge A_*(\omega)\), the unique maximal solution of
\eqref{eq:KS-local} is global-in-time. Equivalently,
\begin{equation*}
\mathbb P\left(
\exists\,A_*(\omega)<\infty
\ \text{such that}\
\tau_*(A,\omega)=\infty
\ \text{for every integer }A\ge A_*(\omega)
\right)=1,
\end{equation*}
where \(\tau_*(A,\omega)\) denotes the maximal existence time of the
solution corresponding to the amplitude \(A\) given in \eqref{eq:KS-local}.
\end{theorem}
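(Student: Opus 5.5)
Following the abstract, the proof combines a pathwise enhanced-dissipation estimate for the stochastic Couette semigroup $S_\nu$, a bootstrap over time blocks, and Borel--Cantelli over the integers $A$. By Theorem~\ref{thm:local-complete} and \eqref{eq:Linfty-blowup-local}, it suffices to show that, almost surely and for all large integers $A$, $\Vert n(t)\Vert_{L^\infty}$ stays bounded on every finite interval.

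\textbf{Step 1: explicit solution operator and dissipation.} In the Stratonovich form, the characteristics $x\mapsto x-yW_t$ remove the transport exactly. Writing $g(x,y,t)=f(x+yW_t,y,t)$, the $k$-th Fourier mode in $x$ solves a heat equation with symbol $\nu\bigl(k^2+(\eta-kW_t)^2\bigr)$ in the $y$-frequency $\eta$. Consequently,
\[
\widehat{S_\nu(t,s)f}(k,\eta)=\exp\Bigl(-\nu\int_s^t\bigl(k^2+(\eta-k(W_t-W_r))^2\bigr)dr\Bigr)\widehat{f}(k,\eta+k(W_t-W_s)).
\]
The key quantity is $\int_s^t(\eta'-kW_r)^2\,dr\ge k^2\,\inf_c\int_s^t(W_r-c)^2dr$. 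Over a block of length $\ell$ this infimum is $\ell^2\cdot V$, where $V$ is a Brownian variance functional with $\mathbb P(V<\delta)\le e^{-c/\delta}$ (small-ball estimate). This gives enhanced dissipation of the nonzero modes,
\[
\Vert (S_\nu(t,s)f)_{\neq}\Vert\le C e^{-\nu k^2 V_{[s,t]}(t-s)^2}\Vert f_{\neq}\Vert.
\]
The resulting time scale is $\nu^{-1/3}$, random through $V$. The same formula yields $L^p$ and gradient bounds for the zero mode, which is simply heat flow in $y$.

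\textbf{Step 2: block bootstrap.} Partition $[0,\infty)$ into blocks of length $T_\nu=\nu^{-1/3}$, or a slightly larger power. Call a block \emph{good} if the Brownian increment satisfies $V\ge\nu^{\epsilon}$. I run a bootstrap in the style of Bedrossian--He. The zero mode satisfies a one-dimensional Keller--Segel equation, which never blows up: since $\partial_yc_0$ is bounded by $M/2$ via \eqref{eq:poisson-normalization-local}, one gets $\Vert n_0\Vert_{L^\infty}\le C(M,\Vert n_{\rm in}\Vert)$. The nonzero mode obeys a mild estimate in which the nonlinear forcing $\nu\nabla\cdot(n\nabla c)$ is integrated against the decaying $S_\nu$. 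On a good block, the $\nu$ prefactor times the effective time $\nu^{-1/3}$ gives a gain of $\nu^{2/3-}$. This makes $\Vert n_{\neq}\Vert_X$ contract by a factor $1/2$ across the block, provided the bootstrap hypothesis $\Vert n\Vert_{L^\infty}\le 2B$ holds and $\nu$ is small. Then $n_{\neq}$ remains small and $L^\infty$ stays bounded by an energy/Moser iteration using the smoothing of the heat part. On a bad block, I would use only the crude Gr\"onwall growth over time $T_\nu$, which is uncontrolled. Hence the argument needs \emph{all} blocks to be good up to an infinite horizon, or some way to absorb occasional bad blocks.

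\textbf{Step 3: Borel--Cantelli.} The events $E_{A,j}=\{V_j<\nu^\epsilon\}$ with $\nu=1/A$ have probability at most $e^{-cA^{\epsilon}}$. Bad blocks at late times are harmless once the $x$-dependence has decayed, because the nonzero mode is then exponentially small and the problem is effectively one-dimensional and globally well-posed. So it suffices to control the first $N(A)=A^{C}$ blocks. Summing gives $\sum_A A^C e^{-cA^\epsilon}<\infty$, and by the first Borel--Cantelli lemma, almost surely only finitely many $A$ have a bad early block. This defines $A_*(\omega)$. The rescaling of $W$ with $A$ is only a distributional identity, which is why the statement is restricted to integer $A$ and uses a countable union.

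\textbf{Main obstacle.} The main obstacle is Step 2: making the pathwise $L^\infty$ bootstrap close with random constants. In particular, bad blocks and the late-time regime must be handled without losing the smallness. I would first try to prove that after $O(1)$ good blocks, $\Vert n_{\neq}\Vert$ is below the threshold where even a deterministic Gr\"onwall argument over a single bad block keeps the solution bounded.
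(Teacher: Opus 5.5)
\textbf{Gap: the infinite time horizon in Step~3.} For each fixed $A$ the blocks are i.i.d., so almost surely infinitely many of them are bad, including arbitrarily long runs of bad blocks. Requiring all of the first $A^C$ blocks to be good therefore says nothing about blocks of index $\ell>A^C$. Your claim that late bad blocks are ``harmless once the $x$-dependence has decayed'' is asserted, not proved, and your Step~2 even calls the bad-block growth uncontrolled. A long run of bad blocks has to be paid for by previously accumulated decay. The zero-mode bounds, which are forced by $(u\partial_y\psi)_0$, also need $\nu\int X^2$ and $\nu\int Y^2$ to be finite over all of $[0,\infty)$.

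The missing idea is that bad blocks are cheap:
\begin{itemize}
\item Under the bootstrap ball $\|n_0\|_{H^1_y}+\|n\|_{L^4}\le 4K$, Lemma~\ref{lem:source} gives $\|\mathcal L+\mathcal N\|_{L^2}\le C_K\nu(X+Y)$, where $X=\|u\|_{L^2}$ and $Y=\|\nabla u\|_{L^2}$. Hence $\frac{d}{dt}X^2+\nu Y^2\le C_K\nu X^2$.
\item On \emph{any} block of length $\tau_\nu=\nu^{-1/2}$, good or bad, this gives $\sup_{[t_j,t_{j+1}]}X\le 2X_j$ and $\nu\int_{t_j}^{t_{j+1}}Y^2\le 2X_j^2$.
\item The Duhamel formula then gives $X_{j+1}\le(e^{-Z_j^\nu}+\delta_\nu)X_j$ with $\delta_\nu=O(\nu^{1/4})$.
\end{itemize}
So a bad block costs only a factor $1+\delta_\nu$. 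You need only a positive \emph{density} of blocks with $Z_j^\nu\ge z_*$ for a fixed $z_*$, not every block good.

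The paper obtains this density simultaneously for all $\ell\ge N_\nu=\lceil\nu^{-\beta}\rceil$ by a Chernoff bound: $\mathbb P(\exists\,\ell\ge N:\sum_{j<\ell}I_j^\nu<p_*\ell/2)\le Ce^{-cN}$. This yields $X_\ell\le e^{-\gamma\ell}X_0$ for every $\ell\ge N_\nu$ on an event $G_\nu$ with $\mathbb P(G_\nu^c)\le Ce^{-c\nu^{-\beta}}$. That bound is summable over integer $A=\nu^{-1}$. This decay, uniform over all times, is what closes the zero-mode, Moser and bootstrap estimates on the whole time axis. Your union bound over the first $A^C$ blocks cannot replace it.

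\textbf{Two further errors.}

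First, your own bound $e^{-\nu V(t-s)^2}$ gives the time scale $\nu^{-1/2}$, not $\nu^{-1/3}$. The scale $\nu^{-1/3}$ belongs to deterministic Couette flow, where $\int_0^t r^2\,dr\sim t^3$. On blocks of length $\nu^{-1/3}$ the exponent is $\nu^{1/3}V\to0$, so good blocks do not contract at all. With $\tau_\nu=\nu^{-1/2}$, the variables $Z_j^\nu$ are i.i.d.\ with the $\nu$-independent law of $Z=\inf_a\int_0^1(B_r-a)^2\,dr$. A fixed threshold $z_*$ then suffices, and no small-ball estimate is needed.

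Second, the Fourier-multiplier bound is an $L^2$ statement. It gives no contraction of $\|n_{\neq}\|_X$ for $X=L^1\cap L^p$. The block recursion should be run for $\|u\|_{L^2}$, with the $L^\infty$ bound recovered afterwards from the $L^2$ bound on $n$ through the Moser iteration.
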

Theorem \ref{thm:global-original} demonstrates that the strong random mixing effect exhibited in \eqref{eq:KS-local} can suppress blow-up almost surely.  We remark that Coti Zelati, Hairer and Villringer in \cite{CotiZelatiHairerVillringer2025} discussed the enhanced dissipation of the passive scalar equation subject to stochastic shear flow.  Our contribution is to identify a mechanism for the suppression of finite-time blow-up in the Keller--Segel system. The system may be viewed as a nonlinear analogue of the passive scalar transport equation, and the enhanced dissipation induced by the mixing effect rapidly suppresses the nonzero \(x\)-modes. The remaining zero mode is effectively one-dimensional and can be controlled by means of one-dimensional Nash inequalities. The interplay of these two mechanisms prevents two-dimensional chemotactic concentration from developing into finite-time blow-up.   

We remark that throughout the paper, \(C\) denotes a generic positive constant whose value may change from line to line.
 
\section{Preliminary results}
In this section, we give some preliminary results including the a-priori estimates of Poisson equations in $\Omega=\mathbb T\times \mathbb R$ and the random solution operator $S_{\nu}(t,s)$.  
We begin by introducing some notation. We define the Fourier transform
in the $x$ variable and its inverse by
\begin{align}\label{Fourier_mode_x_f}
f_k(y,t)
:=
\frac{1}{2\pi}
\int_{-\pi}^{\pi} f(x,y,t)e^{-ikx}\,dx,
\qquad
\check g(x,y,t)
:=
\sum_{k\in\mathbb Z} g_k(y,t)e^{ikx}.
\end{align}
We further define the unitary Fourier transform in the $y$ variable by
\begin{equation}\label{Fourier_transform_y}
\widehat f_k(\eta,t)
:=
\frac{1}{\sqrt{2\pi}}
\int_{\mathbb R} f_k(y,t)e^{-iy\eta}\,dy.
\end{equation}
For $\Omega=\mathbb T\times\mathbb R$, Parseval's identity
and the unitarity of the Fourier transform in $y$ yield
\begin{equation}\label{eq:cylinder-Plancherel}
\|f(\cdot,\cdot,t)\|_{L^2(\Omega)}^2
=
2\pi\sum_{k\in\mathbb Z}
\int_{\mathbb R}|\widehat f_k(\eta,t)|^2\,d\eta.
\end{equation}

\begin{lemma}[Poisson estimates]\label{lem:X-elliptic} For the Poisson equation $-\Delta\phi=f$  with the normalization
\eqref{eq:poisson-normalization-local}, there exists a constant
$C_p>0$ such that, for all $f,g\in X$ given by \eqref{eq:X-definition},
\begin{align}
\Vert {\nabla(-\Delta)^{-1}f}\Vert_{L^\infty(\Omega)}
 &\le C_p\Vert{f}\Vert_{X},\label{eq:X-Poisson}\\
\Vert {f\nabla(-\Delta)^{-1}f}\Vert_{X}
 &\le C_p\Vert{f}\Vert_{X}^2,\label{eq:X-quadratic}\\
 \Vert {f\nabla(-\Delta)^{-1}f-g\nabla(-\Delta)^{-1}g}\Vert_X
 &\le C_p(\Vert {f}\Vert_X+\Vert {g}\Vert_{X})\Vert {f-g}\Vert_X.
 \label{eq:X-difference}
\end{align}
Also, \eqref{eq:initial-local} implies $n_{\rm in}\in X$.  Moreover, if $f_0=0$ and $-\Delta\phi=f$ with $
\phi_0=0$, then
\begin{equation}\label{eq:elliptic-L2}
\|\nabla\phi\|_{L^2(\Omega)}+\|\nabla^2\phi\|_{L^2(\Omega)}
       \le C\|f\|_{L^2(\Omega)},
~~
 \|\partial_y\phi\|_{L^\infty_yL^2_x}\le C\|f\|_{L^2(\Omega)}.
\end{equation}
\end{lemma}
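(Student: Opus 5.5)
The plan is to split $f=f_0+f_{\neq}$ and use $\nabla(-\Delta)^{-1}f=\nabla(-\Delta)^{-1}f_0+\nabla(-\Delta)^{-1}f_{\neq}$. The two pieces are treated separately.

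\emph{Zero mode.} Here $\nabla(-\Delta)^{-1}f_0=(0,\partial_y c_0)$, with $\partial_y c_0$ given explicitly by \eqref{eq:poisson-normalization-local}. This gives
\[
|\partial_y c_0|\le \|f_0\|_{L^1(\mathbb R)}\le \tfrac{1}{2\pi}\|f\|_{L^1(\Omega)}.
\]

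\emph{Nonzero modes.} I write $\nabla(-\Delta)^{-1}f_{\neq}=\nabla G_{\neq}*f$, where $G$ is the Green function on $\mathbb T\times\mathbb R$ with its zero mode removed. The local behaviour is $\nabla G_{\neq}(z)\sim -z/(2\pi|z|^2)$ near the origin. For $|k|\ge1$ the $k$-th mode of the kernel is $e^{-|k||y|}/(2|k|)$, so $\nabla G_{\neq}$ decays exponentially in $|y|$. Hence $\nabla G_{\neq}\in L^{p'}(\Omega)$ for every $p'<2$. Splitting the kernel into $\chi\nabla G_{\neq}\in L^{p'}$ (with $p'=p/(p-1)<2$, since $p>2$) and a bounded remainder $(1-\chi)\nabla G_{\neq}\in L^\infty$, Young's inequality gives
\[
\|\nabla(-\Delta)^{-1}f_{\neq}\|_{L^\infty}\le C(\|f\|_{L^p}+\|f\|_{L^1}).
\]
Here $\|f_{\neq}\|_{L^q}\le 2\|f\|_{L^q}$ because averaging in $x$ is a contraction. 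This proves \eqref{eq:X-Poisson}.

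The main technical point is the precise kernel bound: $|\nabla G_{\neq}(x,y)|\le C(|z|^{-1}\mathbf 1_{|z|\le1}+e^{-|y|})$. I would prove it by comparing with the periodized $\mathbb R^2$ Newtonian kernel, or alternatively by summing the explicit Fourier series $\sum_{k\ne0}e^{ikx-|k||y|}$ in closed form.

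\emph{Quadratic bounds.} Writing $v=\nabla(-\Delta)^{-1}f$, Hölder gives
\[
\|fv\|_{L^q}\le\|f\|_{L^q}\|v\|_{L^\infty}\quad\text{for } q=1,p,
\]
which with \eqref{eq:X-Poisson} yields \eqref{eq:X-quadratic}. For \eqref{eq:X-difference} I use bilinearity:
\[
f\nabla(-\Delta)^{-1}f-g\nabla(-\Delta)^{-1}g=(f-g)\nabla(-\Delta)^{-1}f+g\nabla(-\Delta)^{-1}(f-g).
\]
Each term is then bounded as above, using the linearity of the normalization \eqref{eq:poisson-normalization-local} in $f$.

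\emph{Initial data.} $n_{\rm in}\in L^1_2\subset L^1$ and $n_{\rm in}\in L^\infty$, so interpolation gives $n_{\rm in}\in L^p$ and hence $n_{\rm in}\in X$.

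\emph{$L^2$ estimates.} These go mode by mode via \eqref{eq:cylinder-Plancherel}. For $k\neq0$,
\[
\widehat\phi_k=\widehat f_k/(k^2+\eta^2).
\]
Then $|(ik,i\eta)\widehat\phi_k|\le|\widehat f_k|/\sqrt{k^2+\eta^2}\le|\widehat f_k|$, using $k^2\ge1$. Second derivatives have symbols bounded by $1$. Summing gives the first bound in \eqref{eq:elliptic-L2}.

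For the $L^\infty_yL^2_x$ bound, at each fixed $y$ I use
\[
\|\partial_y\phi(\cdot,y)\|_{L^2_x}^2=2\pi\sum_{k\ne0}|\partial_y\phi_k(y)|^2.
\]
Then I apply the one-dimensional bound $\|g\|_{L^\infty}^2\le\|g\|_{L^2}\|g'\|_{L^2}$ to $g=\partial_y\phi_k$. Equivalently, by Cauchy--Schwarz,
\[
\|\widehat{g}\|_{L^1_\eta}\le \Bigl(\int|\eta|^2|\widehat f_k|^2/(k^2+\eta^2)^2\,(1+\eta^2)\,d\eta\Bigr)^{1/2}\Bigl(\int(1+\eta^2)^{-1}d\eta\Bigr)^{1/2}\le C\|f_k\|_{L^2}.
\]
Summing over $k$ then gives $\|\partial_y\phi\|_{L^\infty_yL^2_x}\le C\|f\|_{L^2}$.
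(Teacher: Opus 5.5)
Your proof is correct and follows essentially the same route as the paper. The gradient bound uses a kernel that is $O(|z|^{-1})$ near the origin, handled by H\"older with $p'<2$, and bounded elsewhere, handled by $\|f\|_{L^1}$; the quadratic and difference bounds use H\"older and the same bilinear identity; and \eqref{eq:elliptic-L2} uses the multipliers $1/(k^2+\eta^2)$ with $k^2\ge1$ followed by a one-dimensional Gagliardo--Nirenberg bound. The differences are organizational: you split off the zero mode and bound $\partial_y c_0$ directly from \eqref{eq:poisson-normalization-local}, which also justifies the paper's unproved claim that the full kernel gradient is bounded (though not decaying) away from the singularity, and you apply the $L^\infty_y$ bound mode by mode rather than to the $L^2(\mathbb T)$-valued function $y\mapsto\partial_y\phi(\cdot,y)$.
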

\begin{proof}
The estimates follow from the Green kernel representation on
$\mathbb T\times\mathbb R$; see \cite[Section~2.1]{CastroLear2023}.
For completeness, we give a proof here.  As shown in \eqref{eq:poisson-normalization-local}, we have
\begin{align}\label{phi0modeprime}
\phi_0'(y)
=
\frac{1}{2}\int_{\mathbb R} f_0(z)\,\mathrm{d}z
-
\int_{-\infty}^{y} f_0(z)\,\mathrm{d}z,
\end{align}
where $\phi_0$ is the zero-mode defined in \eqref{f0-def}.  In addition, we rewrite $\phi=(-\Delta)^{-1}f$ and  $\phi(z)=\int_{\Omega}G(z-\zeta)f(\zeta)\,\,d\zeta,$
where \(G\) is the normalized Green function on
\(\Omega=\mathbb T\times\mathbb R\). Hence,
\[
\nabla\phi(z)
=
\int_{\Omega}\nabla G(z-\zeta)f(\zeta)\,\,d\zeta.
\]
For fixed \(z\in\Omega\), we decompose $\nabla\phi(z)=I_{\mathrm{near}}(z)+I_{\mathrm{far}}(z),$
where
\begin{align*}
I_{\mathrm{near}}(z)
=
\int_{\{d_\Omega(z,\zeta)\le1\}}
\nabla G(z-\zeta)f(\zeta)\,\,d\zeta,\quad
I_{\mathrm{far}}(z)
=
\int_{\{d_\Omega(z,\zeta)>1\}}
\nabla G(z-\zeta)f(\zeta)\,\,d\zeta.
\end{align*}
For the near-field term, the local singularity of the cylinder Green
function satisfies
\[
|\nabla G(z-\zeta)|
\le
C\,d_\Omega(z,\zeta)^{-1}
\qquad\text{when }d_\Omega(z,\zeta)\le1.
\]
  Let $p'=p/(p-1)$ be the H\"older conjugate of $p$.
Since $p>2$, we have $1<p'<2$. H\"older's inequality then yields
\begin{align}
|I_{\mathrm{near}}(z)|
&\le
C\left(\frac{2\pi}{2-p'}\right)^{1/p'}
\left(
\int_{\{d_\Omega(z,\zeta)\le1\}}
|f(\zeta)|^p\,\,d\zeta
\right)^{1/p}\notag\\
&\le
C_p\Vert{f}\Vert_{L^p(\Omega)}.
\label{eq:near-field-Poisson}
\end{align}
For the far-field term, the normalized cylinder Green function satisfies
\[
\sup_{\{d_\Omega(z,\zeta)>1\}}
|\nabla G(z-\zeta)|\le C.
\]
It then follows that
\begin{align}
|I_{\mathrm{far}}(z)|
&\le
\int_{\{d_\Omega(z,\zeta)>1\}}
|\nabla G(z-\zeta)|\,|f(\zeta)|\,d\zeta\notag\\
&\le
C\int_{\{d_\Omega(z,\zeta)>1\}}
|f(\zeta)|\,d\zeta\le
C\Vert {f}\Vert_{L^1(\Omega)}.
\label{eq:far-field-Poisson}
\end{align}
Combining \eqref{eq:near-field-Poisson} and
\eqref{eq:far-field-Poisson}, we obtain
\[
\Vert {\nabla(-\Delta)^{-1}f}\Vert_{L^\infty(\Omega)}
\le
C_p\left(
\Vert {f}\Vert_{L^p(\Omega)}
+
\Vert {f}\Vert_{L^1(\Omega)}
\right)
=
C_p\Vert{f}\Vert_{X},
\]
which proves \eqref{eq:X-Poisson}. For $r=1,p$, thanks to H\"older's inequality, one has
\[
\Vert{f\nabla(-\Delta)^{-1}f}\Vert_{L^r(\Omega)}\le
\Vert {f}\Vert_{L^r(\Omega)}
\Vert {\nabla(-\Delta)^{-1}f}\Vert_{L^\infty(\Omega)}
 \le C_p\Vert {f}\Vert_{L^r(\Omega)}\Vert {f}\Vert_{X},
\]
then the summation proves \eqref{eq:X-quadratic}.   

Using the identity
\[
\begin{aligned}
&f\nabla(-\Delta)^{-1}f
-g\nabla(-\Delta)^{-1}g\\
&\qquad
=(f-g)\nabla(-\Delta)^{-1}f
+g\nabla(-\Delta)^{-1}(f-g),
\end{aligned}
\]
we obtain, for \(r=1,p\),
\[
\begin{aligned}
&\Vert {
f\nabla(-\Delta)^{-1}f
-g\nabla(-\Delta)^{-1}g
}\Vert_{L^r(\Omega)}\\
&\quad\le
\Vert {f-g}\Vert_{L^r(\Omega)}
\Vert {\nabla(-\Delta)^{-1}f}\Vert_{L^\infty(\Omega)}
+
\Vert {g}\Vert_{L^r(\Omega)}
\Vert{\nabla(-\Delta)^{-1}(f-g)}\Vert_{L^\infty(\Omega)}.
\end{aligned}
\]
Applying \eqref{eq:X-Poisson} first to \(f\) and then to \(f-g\)
gives
\[
\begin{aligned}
&\Vert {
f\nabla(-\Delta)^{-1}f
-g\nabla(-\Delta)^{-1}g
}\Vert_{L^r(\Omega)}\\
&\quad\le
C_p\Vert{f-g}\Vert_{L^r(\Omega)} \Vert{f}\Vert_{X}
+
C_p\Vert {g}\Vert_{L^r(\Omega)}\Vert {f-g}\Vert_{X}.
\end{aligned}
\]
Summing this inequality for \(r=1\) and \(r=p\), we find
\[
\begin{aligned}
&\Vert {
f\nabla(-\Delta)^{-1}f
-g\nabla(-\Delta)^{-1}g
}\Vert_{X}\\
&\quad\le
C_p
\bigl(
\Vert {f-g}\Vert_{L^1(\Omega)}+\Vert {f-g}\Vert_{L^p(\Omega)}
\bigr)\Vert {f}\Vert_{X}\\
&\qquad
+
C_p
\bigl(
\Vert {g}\Vert_{L^1(\Omega)}+\Vert {g}\Vert_{L^p(\Omega)}
\bigr)\Vert {f-g}\Vert_{X}\\
&\quad=
C_p\Vert {f-g}\Vert_{X}\Vert {f}\Vert_{X}
+
C_p\Vert {g}\Vert_{X}\Vert {f-g}\Vert_{X}\\
&\quad=
C_p\bigl(\Vert {f}\Vert_{X}+\Vert {g}\Vert_{X}\bigr)
\Vert {f-g}\Vert_{X}.
\end{aligned}
\]
This proves \eqref{eq:X-difference}.  In addition,  $\Vert {n_{\rm in}}\Vert_{L^p(\Omega)}^p\le
\Vert {n_{\rm in}}\Vert_{L^\infty(\Omega)}^{p-1}\Vert {n_{\rm in}}\Vert_{L^1(\Omega)}$, which implies $n_{\text{in}}\in  X.$

  For every \(k\ne0\), by using (\ref{Fourier_mode_x_f}), we have the \(k\)-th Fourier mode in $x$ of
\(-\Delta \phi =f\) satisfies $\left(k^2-\partial_y^2\right)\phi_k=f_k.$
Taking the Fourier transform in \(y\), we obtain from (\ref{Fourier_transform_y}) that $(k^2+\eta^2)\hat\phi_k(\eta)
=
\widehat f_k(\eta),$
and hence
\begin{equation}\label{eq:psi-Fourier-multiplier}
\widehat\phi_k(\eta)
=
\frac{\widehat f_k(\eta)}{k^2+\eta^2},
\qquad k\ne0.
\end{equation}
Since \(k\ne0\), one has \(k^2+\eta^2\ge1\). Using
\eqref{eq:cylinder-Plancherel} and
\eqref{eq:psi-Fourier-multiplier}, we further find
\begin{align*}
\|\nabla\phi\|_{L^2(\Omega)}^2
&=
2\pi\sum_{k\ne0}\int_{\mathbb R}
(k^2+\eta^2)|\widehat\phi_k(\eta)|^2\,d\eta
\\
&=
2\pi\sum_{k\ne0}\int_{\mathbb R}
\frac{|\widehat f_k(\eta)|^2}{k^2+\eta^2}\,d\eta
\\
&\le
2\pi\sum_{k\ne0}\int_{\mathbb R}
|\widehat f_k(\eta)|^2\,d\eta
=
\|f\|_{L^2(\Omega)}^2.
\end{align*}

Similarly,
\begin{align*}
\|\phi_{xx}\|_{L^2(\Omega)}^2
+2\|\phi_{xy}\|_{L^2(\Omega)}^2
+\|\phi_{yy}\|_{L^2(\Omega)}^2
&=
2\pi\sum_{k\ne0}\int_{\mathbb R}
(k^4+2k^2\eta^2+\eta^4)
|\widehat\phi_k(\eta)|^2\,d\eta
\\
&=
2\pi\sum_{k\ne0}\int_{\mathbb R}
(k^2+\eta^2)^2
|\widehat\phi_k(\eta)|^2\,d\eta
\\
&=
2\pi\sum_{k\ne0}\int_{\mathbb R}
|\widehat f_k(\eta)|^2\,d\eta=
\|f\|_{L^2(\Omega)}^2.
\end{align*}
Consequently,
\begin{equation}\label{eq:elliptic-L2-detailed}
\|\nabla\phi\|_{L^2(\Omega)}
+
\|\nabla^2\phi\|_{L^2(\Omega)}
\le
C\|f\|_{L^2(\Omega)}.
\end{equation}

To prove the second estimate in \eqref{eq:elliptic-L2}, set $F(y):=\partial_y\phi(\cdot,y)\in L^2(\mathbb T).$
By using \eqref{eq:elliptic-L2-detailed}, one has
$F\in H^1\bigl(\mathbb R;L^2(\mathbb T)\bigr)$ and $F'(y)=\partial_y^2\phi(\cdot,y).$
The one-dimensional Gagliardo--Nirenberg inequality  gives
\[
\|F\|_{L^\infty(\mathbb R;L^2(\mathbb T))}^2
\le
2
\|F\|_{L^2(\mathbb R;L^2(\mathbb T))}
\|F'\|_{L^2(\mathbb R;L^2(\mathbb T))}.
\]
Consequently,
\[
\|\partial_y\phi\|_{L^\infty_yL^2_x}^2
\le
2
\|\partial_y\phi\|_{L^2(\Omega)}
\|\partial_y^2\phi\|_{L^2(\Omega)}
\le
C\|f\|_{L^2(\Omega)}^2,
\]
where the last inequality follows from
\eqref{eq:elliptic-L2-detailed}. Taking square roots yields $\|\partial_y\phi\|_{L^\infty_yL^2_x}
\le
C\|f\|_{L^2(\Omega)},$
which finishes the proof of \eqref{eq:elliptic-L2}.

\end{proof}

We next collect several important Gagliardo--Nirenberg inequalities as follows:
 \begin{lemma} \label{lem:interpolation}
 Suppose that $f\in H^1(\Omega)\cap L^1(\Omega)\cap L^4(\Omega),$ then
\begin{align}
 \|f\|_{L^4(\Omega)}^2
 &\le C\|f\|_{L^2(\Omega)}
                   \bigl(\|\nabla f\|_{L^2(\Omega)}+\|f\|_{L^2(\Omega)}\bigr),
                   \label{eq:Ladyzhenskaya}\\
 \|f\|_{L^2(\Omega)}^2
 &\le C\|f\|_{L^1(\Omega)}\|\nabla f\|_{L^2(\Omega)}
                  +C\|f\|_{L^1(\Omega)}^2,\label{eq:inhom-Nash}
\end{align}
where $\Omega=\mathbb T\times \mathbb R.$  In addition, suppose that $a\in L^1(\mathbb R)\cap L^2(\mathbb R)$, then 
\begin{equation}\label{eq:one-D-Nash}
 \|a\|_{L^2(\mathbb R)}^6
 \le C\|a\|_{L^1(\mathbb R)}^4\|a'\|_{L^2(\mathbb R)}^2,
\end{equation}
where $C>0$ is a constant.
\end{lemma}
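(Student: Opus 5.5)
The three inequalities in Lemma \ref{lem:interpolation} are standard, and I would prove each by reducing to a Euclidean or one-dimensional estimate, with extra care only where the torus direction lacks a Poincaré inequality.

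\emph{Step 1: the Ladyzhenskaya inequality \eqref{eq:Ladyzhenskaya}.} I would use the one-dimensional Agmon-type bounds in each variable and then the classical Ladyzhenskaya trick.

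In $y$, for every fixed $x$,
\[
\sup_y|f|^2\le \int_{\mathbb R}|f||\partial_yf|\,dy .
\]
In $x$, since $\mathbb T$ has length $2\pi$, for every fixed $y$,
\[
\sup_x|f|^2\le \frac1{2\pi}\int_{\mathbb T}|f|^2dx+2\int_{\mathbb T}|f||\partial_xf|\,dx .
\]
The second bound follows by writing $|f(x)|^2=|f(x_0)|^2+\int_{x_0}^x\partial_x|f|^2$ and averaging over $x_0$.

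Then I would write $\int_\Omega f^4\le\int_{\mathbb R}\sup_x f^2\,dy\int_{\mathbb T}\sup_y f^2\,dx$. Inserting the two bounds and applying Cauchy--Schwarz gives
\[
\|f\|_{L^4}^4\le C\bigl(\|f\|_{L^2}^2+\|f\|_{L^2}\|\partial_xf\|_{L^2}\bigr)\,\|f\|_{L^2}\|\partial_yf\|_{L^2}.
\]
The right side is at most $C\|f\|_{L^2}^2(\|\nabla f\|_{L^2}+\|f\|_{L^2})^2$. The lower-order term $\|f\|_{L^2}$ appears precisely because the $x$-mean cannot be controlled by $\partial_x f$. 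A density argument from $C_c^\infty$ completes this step.

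\emph{Step 2: the Nash inequality \eqref{eq:inhom-Nash}.} I would use Fourier analysis via \eqref{eq:cylinder-Plancherel}. Note that $|\widehat f_k(\eta)|\le C\|f\|_{L^1}$ for all $k,\eta$. Split the frequency set at a radius $R$, counting lattice-in-$k$, continuum-in-$\eta$ points:
\[
\|f\|_{L^2}^2\le C\sum_{k}\int_{k^2+\eta^2\le R^2}\|f\|_{L^1}^2\,d\eta+R^{-2}\|\nabla f\|_{L^2}^2 .
\]
The low-frequency measure $\sum_{|k|\le R}2\sqrt{R^2-k^2}$ is bounded by $C(R^2+R)$. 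The extra $+R$ comes from the $k=0$ line, i.e.\ the one-dimensional zero mode.

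This gives
\[
\|f\|_{L^2}^2\le C(R^2+R)\|f\|_{L^1}^2+R^{-2}\|\nabla f\|_{L^2}^2 .
\]
If $\|\nabla f\|_{L^2}\ge\|f\|_{L^1}$, I choose $R^2=\|\nabla f\|_{L^2}/\|f\|_{L^1}\ge1$, so that $R\le R^2$ and the bound becomes $C\|f\|_{L^1}\|\nabla f\|_{L^2}$. Otherwise I choose $R=1$, which gives $C\|f\|_{L^1}^2$. Either way \eqref{eq:inhom-Nash} follows.

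\emph{Step 3: the one-dimensional Nash inequality \eqref{eq:one-D-Nash}.} The same splitting on $\mathbb R$ gives
\[
\|a\|_{L^2}^2\le CR\|a\|_{L^1}^2+R^{-2}\|a'\|_{L^2}^2 .
\]
Optimizing with $R^3=\|a'\|_{L^2}^2/\|a\|_{L^1}^2$ yields $\|a\|_{L^2}^2\le C\|a\|_{L^1}^{4/3}\|a'\|_{L^2}^{2/3}$. Cubing gives \eqref{eq:one-D-Nash}. If $a'\notin L^2$, the right side is infinite and there is nothing to prove.

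The only delicate point I expect is the anisotropy of $\Omega$: the periodic direction blocks scale invariance, so lower-order terms are needed in \eqref{eq:Ladyzhenskaya} and \eqref{eq:inhom-Nash}. The lattice-point count in Step 2 is where this has to be handled carefully, by splitting the low-frequency count into the $k=0$ line and the $k\neq0$ modes.
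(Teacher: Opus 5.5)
Your proposal is correct and follows the paper's proof. For \eqref{eq:Ladyzhenskaya} it uses the same one-dimensional bounds $\sup_x|f|^2\le C\int_{\mathbb T}(|f|^2+|f\,\partial_xf|)\,dx$ and $\sup_y|f|^2\le C\int_{\mathbb R}|f\,\partial_yf|\,dy$, combined by Cauchy--Schwarz and a density argument. For \eqref{eq:inhom-Nash} it uses the same Plancherel low/high-frequency split with $R^2=\|\nabla f\|_{L^2}/\|f\|_{L^1}$ or $R=1$; your explicit count $C(R^2+R)$, absorbed because $R\ge1$, makes precise the paper's claim that the low-frequency region has measure at most $CR^2$. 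For \eqref{eq:one-D-Nash} the paper only says it follows similarly from the one-dimensional Gagliardo--Nirenberg inequality, and your Fourier splitting on $\mathbb R$ with $R^3=\|a'\|_{L^2}^2/\|a\|_{L^1}^2$ is a valid way to fill that in.
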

\begin{proof}
Firstly, we assume $f$ is  smooth and compactly supported in $y$. Using
integration by parts, one has
\[
 |f(x,y)|^2
 \le C\int_{\mathbb T}(|f|^2+|f\partial_xf|)(z,y)\,d z,
~~
 |f(x,y)|^2\le2\int_{\mathbb R}|f\partial_yf|(x,z)\,d z,
\]
where $C>0$ is a constant.  Then, invoking Cauchy--Schwarz inequality, we obtain
\begin{align*}
 \|f\|_{L^4(\Omega)}^4
 \le &C\bigl(\|f\|_{L^2(\Omega)}^2+\|f\|_{L^2(\Omega)}\|\partial_xf\|_{L^2(\Omega)}\bigr)
    \|f\|_{L^2(\Omega)}\|\partial_yf\|_{L^2(\Omega)}\\
 \le &C\|f\|_{L^2(\Omega)}^2(\|f\|_{L^2(\Omega)}+\|\nabla f\|_{L^2(\Omega)})^2.
\end{align*}
By using the density argument, we have that for $f\in H^1(\Omega),$  \eqref{eq:Ladyzhenskaya} holds.

The estimate \eqref{eq:inhom-Nash} is the standard inhomogeneous Nash
inequality on \(\mathbb T\times\mathbb R\). For completeness, we give the proof.  By Plancherel, split the frequency space into
\(k^2+\eta^2\le R^2\) and \(k^2+\eta^2>R^2\):
\begin{align*}
\|f\|_{L^2(\Omega)}^2
&=
2\pi\sum_k\int_{k^2+\eta^2\le R^2}
|\widehat f_k(\eta)|^2\,d\eta
+
2\pi\sum_k\int_{k^2+\eta^2>R^2}
|\widehat f_k(\eta)|^2\,d\eta.
\end{align*}
Since $|\widehat f_k(\eta)|\le C\|f\|_{L^1(\Omega)}$ for constant $C>0$
and the frequency region \(k^2+\eta^2\le R^2\) has measure bounded by
\(CR^2\), we have
\[
2\pi\sum_k\int_{k^2+\eta^2\le R^2}
|\widehat f_k(\eta)|^2\,d\eta\leq CR^2\|f\|_{L^1(\Omega)}^2,
\]
for constant $C>0.$
On the high-frequency region, $1\le R^{-2}(k^2+\eta^2),$
and hence
\[
2\pi\sum_k\int_{k^2+\eta^2>R^2}
|\widehat f_k(\eta)|^2\,d\eta
\le
R^{-2}\|\nabla f\|_{L^2(\Omega)}^2.
\]
Therefore,
\[
\|f\|_{L^2(\Omega)}^2
\le
CR^2\|f\|_{L^1(\Omega)}^2
+
R^{-2}\|\nabla f\|_{L^2(\Omega)}^2.
\]
Taking \(R^2=\|\nabla f\|_{L^2(\Omega)}/\|f\|_{L^1(\Omega)}\) when
\(\|\nabla f\|_{L^2(\Omega)}\ge\|f\|_{L^1(\Omega)}\), and \(R=1\) otherwise, yields
\[
\|f\|_2^2
\le
C\|f\|_1\|\nabla f\|_2+C\|f\|_1^2.
\]
The density argument gives the inequality (\ref{eq:inhom-Nash}) on
$L^1\cap H^1$.

Similarly, by using the one-dimensional Gagliardo--Nirenberg inequality, one has (\ref{eq:one-D-Nash}) holds.
\end{proof}

We next establish the required estimates for the random solution operator \(S_{\nu}(t,s)\) solving \eqref{eq:passive-local}.
\subsection{A-priori estimates of  random solution operators in passive scalar equations}
For $q>2$ and $0\le a<T$, let $\mathcal S^q_{a,T}$ be the complete space of adapted, pathwise continuous $X$-valued processes with
\begin{equation}\label{eq:expected-sup-space}
 \Vert {u}\Vert_{\mathcal S^q_{a,T}}
 =\left(\mathbb E\sup_{a\le t\le T}\Vert{u(t)}\Vert_{X}^q\right)^{1/q}.
\end{equation}

\begingroup
\begin{proposition}\label{prop:pathwise-kernel}
Assume $f,F\in L^r(\Omega)$.  For every path, $0\le s<t$ and $r\in[1,\infty]$,  
\begin{align}
\Vert {S_\nu(t,s)f}\Vert_{L^r(\Omega)}&\le\Vert {f}\Vert_{L^r(\Omega)},\label{eq:Lr-contraction}\\
\Vert {S_\nu(t,s)\nabla\cdot F}\Vert_{L^r(\Omega)}
 &\le\frac{C\Gamma_{t,s}}{\sqrt{\nu(t-s)}}\Vert {F}\Vert_{L^r(\Omega)},
 \label{eq:pathwise-divergence-bound}\\
  \|S_\nu(t,s)\nabla\cdot F\|_{L^\infty(\Omega)}
 &\le C\Lambda_T[\nu(t-s)]^{-\frac12-\frac1p}
           \|F\|_{L^p(\Omega)},\label{eq:pathwise-Lp-Linfty-divergence}
\end{align}
where $C>0$ is a constant and 
\begin{equation}\label{eq:Gamma-definition}
 \Gamma_{t,s}
 =\left(2+\frac1{t-s}\int_s^t|W_a-W_s|^2\,\,d a\right)^{1/2},  ~\Lambda_T:=2(1+\sup_{0\le r\le T}|W_r|^2).
\end{equation}
Moreover,  for $r<\infty$ and $f\in L^r(\Omega),$ 
\begin{align}\label{strong_continuity_semigroup}
\lim_{t\downarrow s}
\Vert {S_\nu(t,s)f-f}\Vert_{L^r(\Omega)} =0.
\end{align} 
\end{proposition}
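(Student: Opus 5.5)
The plan is to write $S_\nu(t,s)$ pathwise as an explicit Gaussian kernel and read off all four estimates from it. Fix a path and put $w(a):=W_a-W_s$ for $a\in[s,t]$. Because the noise enters in Stratonovich form, the chain rule holds without an It\^o correction. So if $f(x,y,t)=g(x-y\,w(t),y,t)$, then $g$ solves the deterministic, time-dependent equation
\[
\partial_t g=\nu\bigl(\partial_x^2+(\partial_y-w(t)\partial_x)^2\bigr)g ,\qquad g(\cdot,s)=f_s .
\]
This operator has constant coefficients in space. Hence on $\mathbb R^2$ its fundamental solution is the centered Gaussian $G_Q$ with covariance
\[
Q(t,s)=2\nu\int_s^t\begin{pmatrix}1+w(a)^2 & -w(a)\\ -w(a) & 1\end{pmatrix}da .
\]
On $\Omega$ the kernel is obtained by periodizing in $x$ (equivalently, working on the Fourier side mode by mode, which gives the same formula). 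This yields
\[
S_\nu(t,s)f(x,y)=\int_{\mathbb R^2}G_Q\bigl(x-y\,w(t)-\xi,\;y-\eta\bigr)f(\xi,\eta)\,d\xi\,d\eta ,
\]
with $f$ extended periodically in $\xi$. The kernel is nonnegative and has unit mass in $(\xi,\eta)$. For each fixed $(x,y)$ the map $(\xi,\eta)\mapsto$ kernel is a translate of $G_Q$, and $(x,y)\mapsto(x-yw(t),y)$ is measure preserving. Young's inequality then gives \eqref{eq:Lr-contraction} for all $r\in[1,\infty]$.

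The key quantitative step is a lower bound on the smallest eigenvalue of $Q$. I would first write $\det Q=4\nu^2\bigl[(t-s)\int_s^t(1+w^2)\,da-(\int_s^t w\,da)^2\bigr]$. By Cauchy--Schwarz, $(\int w)^2\le(t-s)\int w^2$, so $\det Q\ge 4\nu^2(t-s)^2$. Next, $\operatorname{tr}Q=2\nu(t-s)\bigl(2+\frac1{t-s}\int_s^t w^2\,da\bigr)=2\nu(t-s)\Gamma_{t,s}^2$. Therefore
\[
\lambda_{\min}(Q)\ \ge\ \frac{\det Q}{\operatorname{tr}Q}\ \ge\ \frac{2\nu(t-s)}{\Gamma_{t,s}^2}.
\]
For \eqref{eq:pathwise-divergence-bound}, I would integrate by parts in $(\xi,\eta)$, so that $S_\nu(t,s)\nabla\cdot F=-\int\nabla_{(\xi,\eta)}[\text{kernel}]\cdot F$. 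The $(\xi,\eta)$-gradient of the kernel is $-(\nabla G_Q)$ evaluated at the same shifted argument; the shear only enters the $(x,y)$-variables, so no extra factor of $w(t)$ appears. A direct Gaussian computation gives $\|e\cdot\nabla G_Q\|_{L^1(\mathbb R^2)}=c\,(e^{T}Q^{-1}e)^{1/2}\le c\,\lambda_{\min}(Q)^{-1/2}$. Young's inequality then yields the bound $C\Gamma_{t,s}/\sqrt{\nu(t-s)}$ in every $L^r$.

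For \eqref{eq:pathwise-Lp-Linfty-divergence}, I would use H\"older in $(\xi,\eta)$ instead, with the $L^{p'}$ norm of $\nabla G_Q$. Scaling gives
\[
\|\nabla G_Q\|_{L^{p'}}\le C\,\lambda_{\min}(Q)^{-1/2}(\det Q)^{-\frac12(1-\frac1{p'})}=C\,\lambda_{\min}^{-1/2}(\det Q)^{-\frac1{2p}}.
\]
Combining this with the two bounds above gives $C\,\Gamma_{t,s}\,[\nu(t-s)]^{-\frac12-\frac1p}$. Finally $\Gamma_{t,s}\le(2+4\sup_{r\le T}|W_r|^2)^{1/2}\le\Lambda_T$, since $\Lambda_T\ge2$. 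The periodization in $\xi$ is harmless: integrating $f$ over $\mathbb T\times\mathbb R$ against the periodized kernel equals integrating the periodic extension against $G_Q$ over $\mathbb R^2$. The same unfolding turns the $L^1(\mathbb R^2)$ or $L^{p'}(\mathbb R^2)$ kernel bounds into Young/H\"older bounds on $\Omega$.

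For \eqref{strong_continuity_semigroup}, I would use density. By the uniform contraction \eqref{eq:Lr-contraction} it suffices to take $f\in C_c^\infty$. As $t\downarrow s$ we have $Q\to0$ and $w(t)\to0$ by path continuity. So $S_\nu(t,s)f$ is $f$ composed with a shear tending to the identity, convolved with an approximate identity, and it converges to $f$ in $L^r$ for $r<\infty$. For compactly supported $f$, dominated convergence handles the shift.

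I expect the main obstacle to be the rigorous derivation of the kernel representation from the Stratonovich equation, that is, justifying that the pathwise change of variables produces exactly the solution operator used in the mild formulation. A secondary difficulty is the eigenvalue bound, which must be sharp enough to give precisely the factor $\Gamma_{t,s}$. I would handle the first by verifying the formula mode by mode in Fourier variables. For each $k$, the Fourier transform in $y$ reduces \eqref{eq:passive-local} to a linear first-order equation in $(t,\eta)$ driven by the path $W$, which can be solved explicitly along characteristics.
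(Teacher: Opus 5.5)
Your derivation of the kernel, the bound $\lambda_{\min}(Q)\ge\det Q/\operatorname{tr}Q\ge 2\nu(t-s)/\Gamma_{t,s}^2$, the proofs of \eqref{eq:Lr-contraction} and \eqref{eq:pathwise-divergence-bound}, and the density argument for \eqref{strong_continuity_semigroup} are essentially the paper's.

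The gap is in \eqref{eq:pathwise-Lp-Linfty-divergence}, at the claim that periodization in $\xi$ is harmless for the $L^{p'}$ kernel bound. Unfolding is fine for $L^1$, since $\bigl\|\sum_m g(\cdot+2\pi m)\bigr\|_{L^1(\mathbb T)}\le\|g\|_{L^1(\mathbb R)}$. It does not transfer H\"older bounds, however. The periodic extension of $F$ has infinite $L^p(\mathbb R^2)$ norm, so H\"older must be applied on $\Omega$. What enters is then $\|\nabla K_{t,s}\|_{L^{p'}(\Omega)}$ for the periodized kernel. For $p'>1$ this is not controlled by $\|\nabla G_Q\|_{L^{p'}(\mathbb R^2)}$. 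The $x$-spread of $G_Q$ is at least $\sqrt{2\nu(t-s)}$, and once it exceeds $2\pi$, of order $\sqrt{\nu(t-s)}$ or more translates overlap. This raises the $L^{p'}$ norm by roughly the $1/p$-th power of their number.

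This loss is genuine, so the bound you aim for cannot hold with a universal $C$. Take $F=(0,F_2(y))$, independent of $x$. The shear does not act, and $S_\nu(t,s)\nabla\cdot F=\partial_y(g_{t-s}*F_2)$, where $g_{t-s}$ is the one-dimensional heat kernel of variance $2\nu(t-s)$. The best constant in $\|S_\nu(t,s)\nabla\cdot F\|_{L^\infty(\Omega)}\le C\|F\|_{L^p(\Omega)}$ over such $F$ is $c_p[\nu(t-s)]^{-\frac12-\frac1{2p}}$. On a path with $\sup_{[0,T]}|W|\le1$ one has $\Lambda_T\le4$, so this constant exceeds $C\Lambda_T[\nu(t-s)]^{-\frac12-\frac1p}$ once $\nu(t-s)$ is large.

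The paper avoids your step by bounding the periodized kernel pointwise: $\|\nabla K_{t,s}\|_{L^\infty(\Omega)}\le C\Lambda_T^{1/2}[\nu(t-s)]^{-3/2}\bigl(1+\sqrt{\Lambda_T\nu(t-s)}\bigr)$, where the last factor counts the overlapping translates. It then interpolates with the $L^1$ bound to get $\|\nabla K_{t,s}\|_{L^{p'}(\Omega)}\le C_p\Lambda_T^{\frac12+\frac1{2p}}(1+\sqrt{\nu T})^{1/p}[\nu(t-s)]^{-\frac12-\frac1p}$. You should replace the unfolding sentence by this computation. The factor $(1+\sqrt{\nu T})^{1/p}$ cannot be absorbed into $\Lambda_T$, so the constant in \eqref{eq:pathwise-Lp-Linfty-divergence} must be allowed to depend on $p$ and $\nu T$. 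The paper folds this factor into $C$ in its last line, which is harmless for its later use on a fixed interval $[0,T]$.
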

\begin{proof}
Fix $s\geq 0$ and define 
\begin{equation}\label{eq:shear-matrix-definitions}
 B_a^s=W_a-W_s,\quad
 A_a^s=\begin{pmatrix}1+(B_a^s)^2&-B_a^s\\-B_a^s&1\end{pmatrix},
 \quad Q_{t,s}=\int_s^tA_a^s\,d a.
\end{equation}
Define $h(a,x,y)
=
f(a,x+yB_a^s,y),$
where $f$ satisfies $d f
=
-y\partial_xf\circ d W_a
+\nu\Delta f\,d a.$
In addition, since \(B_s^s=0\), one has $h(s,x,y)=f(s,x,y).$ 
Since $d B_a^s=d W_a,$
the Stratonovich chain rule gives
 \(h\) satisfies the pathwise parabolic equation
\begin{align}\label{transformed_eq}
\partial_ah
=
\nu\nabla\cdot(A_a^s\nabla h),
\qquad
h(s)=f(s).
\end{align}

We next derive the fundamental solution of (\ref{transformed_eq}) in $\mathbb R^2$ at first.
 Taking the Fourier transform in the spatial variables, we obtain
\[
\partial_a\widehat h(a,\xi)
=
-\nu\,\xi^TA_a^s\xi\,\widehat h(a,\xi),
\qquad
\xi\in\mathbb R^2.
\]
For each fixed \(\xi\),
integrating it from \(s\) to \(t\), we obtain
\begin{align*}
\widehat h(t,\xi)
&=
\exp\left(
-\nu\int_s^t\xi^TA_a^s\xi\,d a
\right)\widehat h(s,\xi)\\
&=
\exp\left(
-\nu\xi^T
\left(\int_s^tA_a^s\,d a\right)\xi
\right)\widehat f(\xi)\\
&=
\exp\left(
-\nu\xi^TQ_{t,s}\xi
\right)\widehat f(\xi).
\end{align*}
Thus, on \(\mathbb R^2\), the Fourier transform of the fundamental
solution is $\widehat K_{t,s}^{\mathbb R^2}(\xi)
=
\exp\left(-\nu\xi^TQ_{t,s}\xi\right).$
We have \(Q_{t,s}\) is positive definite.  Indeed, for every
\(v=(v_1,v_2)^T\in\mathbb R^2\), one has
\begin{align*}
v^TA_a^sv
&=
\bigl(1+(B_a^s)^2\bigr)v_1^2
-2B_a^sv_1v_2+v_2^2=
v_1^2+\bigl(v_2-B_a^sv_1\bigr)^2\geq 0.
\end{align*}
Moreover, the inverse transform implies
\begin{equation}\label{eq:whole-space-Gaussian}
K_{t,s}^{\mathbb R^2}(x,y)
=
\frac{1}{4\pi\nu\sqrt{\det Q_{t,s}}}
\exp\left[
-\frac{(x,y)^TQ_{t,s}^{-1}(x,y)}{4\nu}
\right],
\end{equation}
where the right-hand side is the centered Gaussian density with
covariance matrix $2\nu Q_{t,s},$ since $\frac{1}{2\pi\sqrt{\det(2\nu Q_{t,s})}}
=
\frac{1}{4\pi\nu\sqrt{\det Q_{t,s}}}$
and $-\frac12(x,y)^T(2\nu Q_{t,s})^{-1}(x,y)
=
-\frac{(x,y)^TQ_{t,s}^{-1}(x,y)}{4\nu}.$
Noting that 
\(\Omega=\mathbb T\times\mathbb R\) with
\(\mathbb T=\mathbb R/(2\pi\mathbb Z)\), we have the kernel is
\(2\pi\)-periodic in the \(x\)-variable. Periodizing
\eqref{eq:whole-space-Gaussian} in \(x\), we define
\begin{equation}\label{eq:periodized-Gaussian}
K_{t,s}(x,y)
=
\sum_{m\in\mathbb Z}
\frac{
\exp\left[
-\dfrac{((x+2\pi m),y)^T
Q_{t,s}^{-1}
((x+2\pi m),y)}
{4\nu}
\right]
}
{4\pi\nu\sqrt{\det Q_{t,s}}},
\end{equation}
which is a fundamental solution of (\ref{transformed_eq}) in $\Omega$. Indeed,
\begin{align*}
K_{t,s}(x+2\pi,y)
&=
\sum_{m\in\mathbb Z}
K_{t,s}^{\mathbb R^2}(x+2\pi+2\pi m,y)\\
&=
\sum_{m\in\mathbb Z}
K_{t,s}^{\mathbb R^2}(x+2\pi(m+1),y)=
K_{t,s}(x,y),
\end{align*}
which implies $K_{t,s}$ is periodic in $x.$  Moreover, we have the facts that \(K_{t,s}\) is nonnegative and its total mass is $1$. Indeed,
using Tonelli's theorem and the change of variables
\(x'=x+2\pi m\), we obtain
\begin{align*}
\int_{\mathbb T\times\mathbb R}
K_{t,s}(x,y)\,d xd y
&=
\sum_{m\in\mathbb Z}
\int_0^{2\pi}\int_{\mathbb R}
K_{t,s}^{\mathbb R^2}(x+2\pi m,y)
\,d yd x\\
&=
\int_{\mathbb R^2}
K_{t,s}^{\mathbb R^2}(x,y)\,d xd y=1.
\end{align*}
Consequently, the solution of (\ref{transformed_eq}) in $\Omega$ is
\begin{equation}\label{eq:transformed-kernel-representation}
h(t,x,y)
=
(K_{t,s}*f)(x,y),
\end{equation}
Recall that $h(t,x,y)=f(t,x+yB_t^s,y).$
Replacing \(x\) by \(x-yB_t^s\) in
\eqref{eq:transformed-kernel-representation}, we obtain the original random solution operator representation $S_\nu(t,s)f(x,y)
=
(K_{t,s}*f)(x-yB_t^s,y).$
The map $(x,y)\mapsto(x-yB_t^s,y)$
has Jacobian determinant one. Therefore, it preserves every
\(L^r(\Omega)\) norm. By Young's convolution inequality and the identity
\(\Vert {K_{t,s}}\Vert_{L^1(\Omega)}=1\), we conclude that, for
\(1\le r\le\infty\),
\begin{align*}
\Vert {S_\nu(t,s)f}\Vert_{L^r(\Omega)}
&=
\Vert {K_{t,s}*f}\Vert_{L^r(\Omega)}\\
&\le
\Vert {K_{t,s}}\Vert_{L^1(\Omega)}\Vert {f}\Vert_{L^r(\Omega)}=
\Vert {f}\Vert_{L^r(\Omega)}.
\end{align*}
This proves \eqref{eq:Lr-contraction}.

Define
$\delta=t-s$, $I_1=\int_s^tB_a^s\,d a$, and
$I_2=\int_s^t(B_a^s)^2\,d a$. Then
\begin{equation}\label{eq:Q-explicit}
 Q_{t,s}=\begin{pmatrix}\delta+I_2&-I_1\\-I_1&\delta\end{pmatrix},
 \quad
 \operatorname{tr}Q_{t,s}=2\delta+I_2,
 \quad
 \det Q_{t,s}=\delta^2+\delta I_2-I_1^2.
\end{equation}
Cauchy--Schwarz inequality implies $I_1^2\le\delta I_2$, and hence
\begin{equation}\label{eq:Q-inverse-covariance}
 \det Q_{t,s}\ge\delta^2,\qquad
 \lambda_{\min}(Q_{t,s})^{-1}
 \le\frac{\operatorname{tr}Q_{t,s}}{\det Q_{t,s}}
 \le\frac1\delta\left(2+\frac{I_2}{\delta}\right).
\end{equation}
To estimate the gradient of the whole-space kernel.  From \eqref{eq:whole-space-Gaussian}, we have
\[
K_{t,s}^{\mathbb R^2}(z)
=
\frac{1}{4\pi\nu\sqrt{\det Q_{t,s}}}
\exp\left(-\frac{z^TQ_{t,s}^{-1}z}{4\nu}\right),
\]
where $z=(x,y)\in\mathbb R^2.$
Since \(Q_{t,s}^{-1}\) is symmetric, $\nabla_z(z^TQ_{t,s}^{-1}z)=2Q_{t,s}^{-1}z.$
It follows that $\nabla K_{t,s}^{\mathbb R^2}(z)
=
-\frac{1}{2\nu}Q_{t,s}^{-1}z
K_{t,s}^{\mathbb R^2}(z).$
Consequently,
\[
\Vert{\nabla K_{t,s}^{\mathbb R^2}}\Vert _{L^1(\mathbb R^2)}
=
\frac{1}{2\nu}
\int_{\mathbb R^2}
|Q_{t,s}^{-1}z|K_{t,s}^{\mathbb R^2}(z)\,d z.
\]
Let $z=2\sqrt\nu\,Q_{t,s}^{1/2}w,$ then $z^TQ_{t,s}^{-1}z=4\nu|w|^2$
and $d z
=
4\nu\sqrt{\det Q_{t,s}}\,d w.$
Hence, $K_{t,s}^{\mathbb R^2}(z)\,d z
=
\frac1\pi e^{-|w|^2}\,d w.$
Moreover, $Q_{t,s}^{-1}z
=
2\sqrt\nu\,Q_{t,s}^{-1/2}w.$
Therefore,
\begin{align*}
\Vert {\nabla K_{t,s}^{\mathbb R^2}}\Vert_{L^1(\mathbb R^2)}
&=
\frac{1}{\sqrt\nu}
\frac1\pi
\int_{\mathbb R^2}
|Q_{t,s}^{-1/2}w|e^{-|w|^2}\,d w\\
&\le
\frac{1}{\sqrt\nu}
\Vert {Q_{t,s}^{-1/2}}\Vert_{\mathrm{op}}
\frac1\pi
\int_{\mathbb R^2}
|w|e^{-|w|^2}\,d w\\
&\le
C\nu^{-1/2}\Vert {Q_{t,s}^{-1/2}}\Vert_{\mathrm{op}},
\end{align*}
where $\Vert{Q^{-1/2}}\Vert_{\mathrm{op}}
=
\lambda_{\min}(Q_{t,s})^{-1/2}.$
Since the whole-space Gaussian and its derivatives decay rapidly, the
periodized kernel can differentiated term by term $\nabla K_{t,s}(x,y)
=
\sum_{m\in\mathbb Z}
\nabla K_{t,s}^{\mathbb R^2}(x+2\pi m,y).$
Therefore, by the triangle inequality and Tonelli's theorem,
\begin{align*}
\Vert {\nabla K_{t,s}}\Vert_{L^1(\mathbb T\times\mathbb R)}
&=
\int_{\mathbb R}\int_0^{2\pi}
\left|
\sum_{m\in\mathbb Z}
\nabla K_{t,s}^{\mathbb R^2}(x+2\pi m,y)
\right|
\,d xd y\\
&\le
\sum_{m\in\mathbb Z}
\int_{\mathbb R}\int_0^{2\pi}
\left|
\nabla K_{t,s}^{\mathbb R^2}(x+2\pi m,y)
\right|
\,d xd y.
\end{align*}
For each \(m\), we use the change of variables \(x'=x+2\pi m\).
Since the intervals $[2\pi m,2\pi(m+1))$,
$m\in\mathbb Z$
form a disjoint partition of \(\mathbb R\), we obtain
\begin{align*}
\Vert {\nabla K_{t,s}}\Vert_{L^1(\mathbb T\times\mathbb R)}
&\le
\sum_{m\in\mathbb Z}
\int_{\mathbb R}
\int_{2\pi m}^{2\pi(m+1)}
\left|
\nabla K_{t,s}^{\mathbb R^2}(x',y)
\right|
\,d x'd y\\
&=
\Vert {
\nabla K_{t,s}^{\mathbb R^2}
}\Vert_{L^1(\mathbb R^2)}.
\end{align*}
Thus, periodization does not increase the \(L^1\) norm of the gradient. Then, by
\eqref{eq:Q-inverse-covariance}, we have
\begin{equation}\label{eq:gradient-kernel-final}
\Vert {\nabla K_{t,s}}\Vert_{L^1(\Omega)}
 \le\frac{C\Gamma_{t,s}}{\sqrt{\nu(t-s)}},
\end{equation}
where $\Gamma_{t,s}$ is given in (\ref{eq:Gamma-definition}).

We now prove the divergence estimate
\eqref{eq:pathwise-divergence-bound}. Let $F=(F_1,F_2)\in L^r(\Omega;\mathbb R^2)$, $1\le r\le\infty.$  Recall that   $\Phi_a^s(x,y)
=
\bigl(x+yB_a^s,y\bigr)$ and $B_a^s=W_a-W_s$.
Since $B_s^s=0,$
one has $\Phi_s^s(x,y)=(x,y)$ and
\[
D\Phi_s^s
=
\begin{pmatrix}
1&B_s^s\\
0&1
\end{pmatrix}
=
I.
\]
Consequently, an initial condition of the form $\nabla\cdot F
=
\partial_xF_1+\partial_yF_2$
is unchanged under the coordinate transformation at time \(s\). Moreover,
\begin{align}
&S_\nu(t,s)\nabla\cdot F(x,y)\notag\\
&\qquad=
\left[
K_{t,s}*
\bigl(\partial_xF_1+\partial_yF_2\bigr)
\right]
(x-yB_t^s,y).
\label{eq:divergence-kernel-before-ibp}
\end{align}
 For smooth
periodic vector fields $F$ that are compactly supported in \(y\), we next integrate by parts and obtain
\begin{align}\label{convlution_gradient_estimate}
&\left[
K_{t,s}*
\bigl(\partial_xF_1+\partial_yF_2\bigr)
\right](x,y)\nonumber\\
&\quad=
\int_{\mathbb T}\int_{\mathbb R}
K_{t,s}(x-x',y-y')
\left[
\partial_{x'}F_1(x',y')
+
\partial_{y'}F_2(x',y')
\right]
\,d y'd x'.
\end{align}
Since $\partial_{x'}K_{t,s}(x-x',y-y')
=
-\partial_xK_{t,s}(x-x',y-y'),$ for the first term in the right hand side of (\ref{convlution_gradient_estimate}), periodicity in \(x'\) yields
\begin{align*}
&\int_{\mathbb T}\int_{\mathbb R}
K_{t,s}(x-x',y-y')
\partial_{x'}F_1(x',y')
\,d y'd x'\\
&\quad=
-\int_{\mathbb T}\int_{\mathbb R}
\partial_{x'}
K_{t,s}(x-x',y-y')
F_1(x',y')
\,d y'd x'\\
&\quad=
\int_{\mathbb T}\int_{\mathbb R}
\partial_xK_{t,s}(x-x',y-y')
F_1(x',y')
\,d y'd x',
\end{align*}
Similarly,
\begin{align*}
&\int_{\mathbb T}\int_{\mathbb R}
K_{t,s}(x-x',y-y')
\partial_{y'}F_2(x',y')
\,d y'd x'=
\int_{\mathbb T}\int_{\mathbb R}
\partial_yK_{t,s}(x-x',y-y')
F_2(x',y')
\,d y'd x'.
\end{align*}
Therefore,
\begin{align}
K_{t,s}*(\nabla\cdot F)
&=
(\partial_xK_{t,s})*F_1
+
(\partial_yK_{t,s})*F_2=
\nabla K_{t,s}*F,
\label{eq:kernel-divergence-ibp}
\end{align}
For \(1\le r\le\infty\), Jacobian determinant of transform $(x,y)\mapsto(x-yB_t^s,y)$ is $1$  and therefore preserves every
\(L^r(\Omega)\)-norm.  Thus,
combining \eqref{eq:divergence-kernel-before-ibp} and
\eqref{eq:kernel-divergence-ibp}, we obtain
\begin{align*}
\Vert {S_\nu(t,s)\nabla\cdot F}\Vert_{L^r(\Omega)}
&=
\Vert {\nabla K_{t,s}*F}\Vert_{L^r(\Omega)}.
\end{align*}
Young's convolution inequality then gives
\begin{align*}
\Vert {S_\nu(t,s)\nabla\cdot F}\Vert_{L^r(\Omega)}
&\le
\Vert {\nabla K_{t,s}}\Vert_{L^1(\Omega)}
\Vert {F}\Vert_{L^r(\Omega)}\\
&\le
\frac{C\Gamma_{t,s}}{\sqrt{\nu(t-s)}}
\Vert {F}\Vert_{L^r(\Omega)},
\end{align*}
where the last inequality follows from
\eqref{eq:gradient-kernel-final}. This proves
\eqref{eq:pathwise-divergence-bound}.

Finally, we extend the preceding identity to an arbitrary vector field $F=(F_1,F_2)\in L^r(\Omega;\mathbb R^2).$ \(\nabla\cdot F\) is defined by
\[
\langle\nabla\cdot F,\varphi\rangle
=
-\int_\Omega F(z)\cdot\nabla\varphi(z)\,d z,
\qquad
\varphi\in C_c^\infty(\Omega),
\]
where the test functions are periodic in the \(x\)-variable.  We first consider \(1\le r<\infty\). Choose a sequence $F^{(n)}\in C_c^\infty(\Omega;\mathbb R^2)$ of smooth vector fields, periodic in \(x\), such that $\Vert{F^{(n)}-F}\Vert_{L^r(\Omega)}\rightarrow0.$
For every \(n\), the integration by parts implies \begin{align}\label{smooth_vector_field_Fn}
    K_{t,s}*(\nabla\cdot F^{(n)})
=
\nabla K_{t,s}*F^{(n)}.
\end{align}
Moreover, Young's convolution inequality indicates
\begin{align*}
\Vert {
\nabla K_{t,s}*F^{(n)}
-
\nabla K_{t,s}*F
}\Vert_{L^r(\Omega)}
&=
\Vert {
\nabla K_{t,s}*(F^{(n)}-F)
}\Vert_{L^r(\Omega)}\\
&\le
\Vert {\nabla K_{t,s}}\Vert_{L^1(\Omega)}
\Vert {F^{(n)}-F}\Vert_{L^r(\Omega)}\rightarrow0.
\end{align*}
Thus, $\nabla K_{t,s}*F^{(n)}
\rightarrow
\nabla K_{t,s}*F$ in $L^r(\Omega)$.  On the other hand, $\nabla\cdot F^{(n)}
\rightarrow
\nabla\cdot F$ in $\mathcal D'(\Omega)$.  Indeed, for every \(\varphi\in C_c^\infty(\Omega)\),
\begin{align*}
\left|
\left\langle
\nabla\cdot(F^{(n)}-F),\varphi
\right\rangle
\right|
&=
\left|
\int_\Omega
(F^{(n)}-F)\cdot\nabla\varphi\,d z
\right|\\
&\le
\Vert {F^{(n)}-F}\Vert_{L^r(\Omega)}
\Vert {\nabla\varphi}\Vert_{L^{r'}(\Omega)}
\rightarrow0,
\end{align*}
where \(r'=r/(r-1)\) with \(r'=\infty\) when \(r=1\).
Passing to the limit in the distributional identity (\ref{smooth_vector_field_Fn}), we have $K_{t,s}*(\nabla\cdot F)
=
\nabla K_{t,s}*F
\quad\text{in }\mathcal D'(\Omega).$
Since \(\nabla K_{t,s}*F\in L^r(\Omega)\), we  further obtain
\begin{equation*}
\Vert {K_{t,s}*(\nabla\cdot F)}\Vert_{L^r(\Omega)}
\le
\Vert {\nabla K_{t,s}}\Vert_{L^1(\Omega)}
\Vert{F}\Vert_{L^r(\Omega)} .
\end{equation*}

For \(r=\infty\), smooth functions are not dense in \(L^\infty\), so we
define the convolution directly by
\begin{equation*}
\bigl[K_{t,s}*(\nabla\cdot F)\bigr](z)
:=
\int_\Omega
\nabla K_{t,s}(z-z')\cdot F(z')\,d z'.
\end{equation*}
Moreover,
\begin{align*}
\left|
\int_\Omega
\nabla K_{t,s}(z-z')\cdot F(z')\,d z'
\right|
&\le
\Vert{F}\Vert_{L^\infty(\Omega)}
\int_\Omega
|\nabla K_{t,s}(z-z')|\,d z'\\
&=
\Vert {\nabla K_{t,s}}\Vert_{L^1(\Omega)}
\Vert{F}\Vert_{L^\infty(\Omega)}.
\end{align*}
Consequently,
\begin{equation*}
\Vert{K_{t,s}*(\nabla\cdot F)}\Vert_{L^\infty(\Omega)}
\le
\Vert {\nabla K_{t,s}}\Vert_{L^1(\Omega)}
\Vert {F}\Vert_{L^\infty(\Omega)}.
\end{equation*}
Finally, recall the inverse transform $(x,y)\mapsto(x-yB_t^s,y)$ preserves every \(L^r\) norm, then we have for all \(1\le r\le\infty\),
\begin{align*}
\Vert {S_\nu(t,s)\nabla\cdot F}\Vert_{L^r(\Omega)}
&=
\Vert{K_{t,s}*(\nabla\cdot F)}\Vert_{L^r(\Omega)}=
\Vert{\nabla K_{t,s}*F}\Vert_{L^r(\Omega)}\\
&\le
\Vert {\nabla K_{t,s}}\Vert_{L^1(\Omega)}
\Vert {F}\Vert_{L^r(\Omega)}\le
\frac{C\Gamma_{t,s}}{\sqrt{\nu(t-s)}}
\Vert {F}\Vert_{L^r(\Omega)}.
\end{align*} 

We finally prove the strong continuity (\ref{strong_continuity_semigroup}). Fix a continuous Brownian path, an
initial time \(s\ge0\) and \(1\le r<\infty\). Then, we first prove that
\begin{equation}\label{eq:strong-continuity-smooth}
\lim_{t\downarrow s}
\Vert {S_\nu(t,s)\varphi-\varphi}\Vert_{L^r(\Omega)}=0~
\text{for every }\varphi\in C_c^\infty(\Omega),
\end{equation}
where \(C_c^\infty(\Omega)\) denotes functions that are smooth, periodic
in \(x\) and compactly supported in \(y\).  Recall the representation $S_\nu(t,s)\varphi(x,y)
=
(K_{t,s}*\varphi)(x-yB_t^s,y).$
Since the inverse transform preserves  \(L^r\) norm, we have
\begin{align}
\Vert {S_\nu(t,s)\varphi-\varphi}\Vert_{L^r(\Omega)}
&\le
\Vert {
(K_{t,s}*\varphi)(x-yB_t^s,y)
-
\varphi(x-yB_t^s,y)
}\Vert_{L^r(\Omega)}\notag\\
&\qquad+
\Vert {
\varphi(x-yB_t^s,y)-\varphi(x,y)
}\Vert_{L^r(\Omega)}\notag\\
&=
\Vert {K_{t,s}*\varphi-\varphi}\Vert_{L^r(\Omega)}\notag\\
&+
\Vert {
\varphi(x-yB_t^s,y)-\varphi(x,y)
}\Vert_{L^r(\Omega)}.
\label{eq:strong-continuity-decomposition}
\end{align}
We first treat the convolution term in \eqref{eq:strong-continuity-decomposition}.  Recall that the covariance matrix  of
\(K_{t,s}^{\mathbb R^2}\) is \(2\nu Q_{t,s}\), then we have 
\[
\int_{\mathbb R^2}
|z|^2K_{t,s}^{\mathbb R^2}(z)\,d z
=
2\nu\operatorname{tr}Q_{t,s}.
\]
Define \(\delta=t-s\), then we obtain
\begin{align*}
\operatorname{tr}Q_{t,s}
&=
2\delta+\int_s^t|W_a-W_s|^2\,d a\le
\delta
\left(
2+\sup_{s\le a\le t}|W_a-W_s|^2
\right).
\end{align*}
By continuity of the Brownian path, one finds $\operatorname{tr}Q_{t,s}\rightarrow0$
as $t\downarrow s.$
Therefore, for every \(\varepsilon>0\), Chebyshev's inequality yields
\begin{align}
\int_{\{|z|>\varepsilon\}}
K_{t,s}^{\mathbb R^2}(z)\,d z
&\le
\frac{1}{\varepsilon^2}
\int_{\mathbb R^2}
|z|^2K_{t,s}^{\mathbb R^2}(z)\,d z=
\frac{2\nu}{\varepsilon^2}
\operatorname{tr}Q_{t,s}
\rightarrow0.
\label{eq:Gaussian-tail-to-zero}
\end{align}
Since \(K_{t,s}\) is obtained by periodizing
\(K_{t,s}^{\mathbb R^2}\), we obtain the same estimate  $\int_{\{d_\Omega(z,0)>\varepsilon\}}
K_{t,s}(z)\,d z
\rightarrow0$.
Now, we use Minkowski's integral inequality
to get
\begin{align*}
\Vert {K_{t,s}*\varphi-\varphi}\Vert_{L^r(\Omega)}
&=
\Vert {
\int_\Omega
K_{t,s}(z')
\bigl[\varphi(\,\cdot-z')-\varphi(\,\cdot\,)\bigr]
\,dz'
}\Vert_{L^r(\Omega)}\\
&\le
\int_\Omega
K_{t,s}(z')
\Vert {
\varphi(\,\cdot-z')-\varphi(\,\cdot\,)
}\Vert_{L^r(\Omega)}
\,d z'.
\end{align*}
Given \(\eta>0\), by using the strong continuity of  translations on
\(L^r(\Omega)\), we obtain for  \(\varepsilon>0\),  there is $\eta>0$ such that
\[
\Vert {
\varphi(\,\cdot-z')-\varphi(\,\cdot\,)\Vert_{L^r(\Omega)}
}
<\eta
\qquad
\text{whenever }d_\Omega(z',0)<\varepsilon.
\]
We further use
\(\Vert {K_{t,s}}\Vert_{L^1(\Omega)}=1\) to obtain
\begin{align*}
\Vert{K_{t,s}*\varphi-\varphi}\Vert_{L^r(\Omega)}
&\le
\eta
+
2\Vert {\varphi}\Vert_{L^r(\Omega)}
\int_{\{d_\Omega(z',0)\ge\varepsilon\}}
K_{t,s}(z')\,d z'.
\end{align*}
In light of \eqref{eq:Gaussian-tail-to-zero}, we further get $\limsup_{t\downarrow s}
\Vert {K_{t,s}*\varphi-\varphi}\Vert_{L^r(\Omega)}
\le\eta.$
Since \(\eta>0\) is arbitrary, one arrives at
\begin{equation}\label{eq:kernel-approximate-identity}
\lim_{t\downarrow s}
\Vert {K_{t,s}*\varphi-\varphi}\Vert_{L^r(\Omega)}=0.
\end{equation}
We next treat the second term of the right side in (\ref{eq:strong-continuity-decomposition}). By the fundamental theorem of calculus, one finds
\begin{align*}
&\varphi(x-yB_t^s,y)-\varphi(x,y)=
-yB_t^s
\int_0^1
\partial_x\varphi(x-\theta yB_t^s,y)\,d\theta.
\end{align*}
By using Minkowski's inequality and the fact that translations in \(x\)
preserve the \(L^r\) norm, we obtain
\begin{align*}
\Vert {
\varphi(x-yB_t^s,y)-\varphi(x,y)
}\Vert_{L^r(\Omega)}
&\le
|B_t^s|
\int_0^1
\Vert{
y\partial_x\varphi(x-\theta yB_t^s,y)
}\Vert_{L^r(\Omega)}
\,d\theta\\
&=
|B_t^s|
\Vert{y\partial_x\varphi}\Vert_{L^r(\Omega)}.
\end{align*}
The continuity of Brownian motion continuity implies $B_t^s=W_t-W_s\rightarrow0$ as $t\downarrow s.$
Hence,
\begin{equation}\label{eq:shear-to-identity}
\lim_{t\downarrow s}
\Vert{
\varphi(x-yB_t^s,y)-\varphi(x,y)\Vert_{L^r(\Omega)}
}
=0.
\end{equation}
Collecting \eqref{eq:strong-continuity-decomposition},
\eqref{eq:kernel-approximate-identity} and
\eqref{eq:shear-to-identity} proves
\eqref{eq:strong-continuity-smooth}.

Now let \(f\in L^r(\Omega)\). Since \(r<\infty\),
\(C_c^\infty(\Omega)\) is dense in \(L^r(\Omega)\). Given
\(\varepsilon>0\), choose
\(\varphi\in C_c^\infty(\Omega)\) such that $\Vert{f-\varphi}\Vert_{L^r(\Omega)}<\varepsilon.$
In light of (\ref{eq:Lr-contraction}), we obtain
\begin{align*}
\Vert{S_\nu(t,s)f-f}\Vert_{L^r(\Omega)}
&\le
\Vert{S_\nu(t,s)(f-\varphi)}\Vert_{L^r(\Omega)}
+
\Vert{S_\nu(t,s)\varphi-\varphi}\Vert_{L^r(\Omega)}
+
\Vert{\varphi-f}\Vert_{L^r(\Omega)}\\
&\le
2\varepsilon
+
\Vert{S_\nu(t,s)\varphi-\varphi}\Vert_{L^r(\Omega)}.
\end{align*}
Moreover, we take \(t\downarrow s\) to get $
\limsup_{t\downarrow s}
\Vert {S_\nu(t,s)f-f}\Vert_{L^r(\Omega)}
\le
2\varepsilon.$
Since \(\varepsilon>0\) is arbitrary, $\lim_{t\downarrow s}
\Vert{S_\nu(t,s)f-f}\Vert_{L^r(\Omega)}=0.$
Thus, (\ref{strong_continuity_semigroup}) holds for $f\in L^r(\Omega).$

It remains to prove (\ref{eq:pathwise-Lp-Linfty-divergence}). By using the definition of $Q_{t,s}$ shown in (\ref{eq:shear-matrix-definitions}), we have  
\[
 \frac{t-s}{\Gamma_{t,s}^2}I\le Q_{t,s}
       \le(t-s)\Gamma_{t,s}^2I,
 \qquad
 \Gamma_{t,s}^2\le2\Lambda_T,
\]
where $\Gamma_{t,s}$ and $\Lambda_{T}$ given in (\ref{eq:Gamma-definition}).  Moreover,  by using
$\det Q_{t,s}\ge(t-s)^2$, we obtain 
\[
 |\nabla K^{\mathbb R^2}_{t,s}(z)|
 \le C\Lambda_T^{1/2}[\nu(t-s)]^{-3/2}
             \exp\!\left(-\frac{|z|^2}{C\Lambda_T\nu(t-s)}\right)
\]
for some constant $C>0.$
After periodization, we further obtain
\[
 \|\nabla K_{t,s}\|_{L^\infty(\Omega)}
 \le C\Lambda_T^{1/2}[\nu(t-s)]^{-3/2}(1+\sqrt{\Lambda_T\nu(t-s)}),
 \qquad
 \|\nabla K_{t,s}\|_{L^1(\Omega)}
 \le C\Lambda_T^{1/2}[\nu(t-s)]^{-1/2}.
\]
We next interpolating these two inequalities with $p'=p/(p-1)$ and obtain
\[
 \|\nabla K_{t,s}\|_{L^{p'}(\Omega)}
 \le C_p\Lambda_T^{\frac12+\frac1{2p}}
           (1+\sqrt{\nu T})^{1/p}
           [\nu(t-s)]^{-\frac12-\frac1p},
\]
where $C_p>0$ is a constant.
Since $\frac12+\frac1{2p}<1$ and $\Lambda_T\ge1$, we apply Young's inequality to show that (\ref{eq:pathwise-Lp-Linfty-divergence}) holds.
\end{proof}
We next establish estimates in expectation for the random solution operator
\(S_\nu(t,s)\) to (\ref{eq:passive-local}).
\begin{proposition}
\label{prop:expected-maximal}
Let $q>2$ and $F$ be adapted with
$\mathbb E\sup_{a\le s\le T}\Vert {F(s)}\Vert_{X}^q<\infty$. Then
\begin{equation}\label{eq:expected-duhamel}
 \begin{aligned}
 &\left(\mathbb E\sup_{a\le t\le T}
 \Vert {\nu\int_a^tS_\nu(t,s)\nabla\cdot F(s)\,d s}\Vert_{X}^q\right)^{1/q}\\
 &\quad\le C_{p,q}(1+\sqrt{T-a})\sqrt{\nu(T-a)}
 \left(\mathbb E\sup_{a\le s\le T}\Vert {F(s)}\Vert_{X}^q\right)^{1/q}.
 \end{aligned}
\end{equation}
Moreover, $\int_a^tS_\nu(t,s)\nabla\cdot F(s)\,d s$ is continuous in $X$.
 If $f$ is
$\mathcal F_a$-measurable, then
\begin{equation}\label{eq:expected-homogeneous}
 \left(\mathbb E\sup_{a\le t\le T}\Vert{S_\nu(t,a)f}\Vert_{X}^q\right)^{1/q}
 \le\Vert{f}\Vert_{L^q(\mathfrak O;X)},
\end{equation}
where 
\[
\Vert {f}\Vert_{L^q(\mathfrak O;X)}
=
\left(
\mathbb E\Vert {f(\omega)}\Vert_{X}^q
\right)^{1/q},
~
1\le q<\infty.
\]
\end{proposition}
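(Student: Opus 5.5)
The plan is to make the stated bound pathwise up to one random factor, and then to use the adaptedness of $F$ together with the independence of Brownian increments. That independence is what avoids needing any moment of $F^*:=\sup_{a\le s\le T}\Vert F(s)\Vert_X$ higher than the $q$-th.

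\emph{The homogeneous estimate.} The bound \eqref{eq:expected-homogeneous} is immediate. By \eqref{eq:Lr-contraction} with $r=1$ and $r=p$, we have $\Vert S_\nu(t,a)f\Vert_X\le\Vert f\Vert_X$ for every path and every $t$. Hence the supremum over $t$ is bounded by $\Vert f\Vert_X$ pathwise, and we take the $q$-th moment. Continuity of $t\mapsto S_\nu(t,a)f$ in $X$ follows from \eqref{strong_continuity_semigroup} and the cocycle property.

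\emph{Pathwise reduction of the Duhamel term.} Applying \eqref{eq:pathwise-divergence-bound} with $r=1$ and $r=p$ gives
\[
\Big\Vert \nu\int_a^t S_\nu(t,s)\nabla\cdot F(s)\,ds\Big\Vert_X\le C\sqrt\nu\int_a^t\frac{\Gamma_{t,s}}{\sqrt{t-s}}\Vert F(s)\Vert_X\,ds .
\]
Next I bound the random factor. For $s<t\le T$, set $G_s:=\sup_{s\le r\le T}|W_r-W_s|$. Then $\Gamma_{t,s}\le\sqrt2+G_s$, and the point is that this bound no longer depends on $t$.

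The $\sqrt2$ part contributes $C\sqrt\nu\,F^*\int_a^t(t-s)^{-1/2}ds\le C\sqrt{\nu(T-a)}\,F^*$. Its $L^q(\mathfrak O)$ norm is at most $C\sqrt{\nu(T-a)}\,\Vert F^*\Vert_{L^q}$.

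For the $G_s$ part, I use Hölder's inequality in $s$ with exponents $q$ and $q'=q/(q-1)<2$. This is where $q>2$ is needed: it makes $(t-s)^{-q'/2}$ integrable. Uniformly in $t\in[a,T]$,
\[
\int_a^t\frac{G_s\Vert F(s)\Vert_X}{\sqrt{t-s}}\,ds\le C_q(T-a)^{\frac1{q'}-\frac12}\Big(\int_a^T G_s^q\Vert F(s)\Vert_X^q\,ds\Big)^{1/q}.
\]
The right-hand side is independent of $t$, so the supremum over $t$ can be taken before expectation at no cost.

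\emph{Taking expectations (the key step).} For each $s$, $G_s$ is measurable with respect to $\sigma(W_r-W_s:\ r\ge s)$. This $\sigma$-algebra is independent of $\mathcal F_s$, while $\Vert F(s)\Vert_X$ is $\mathcal F_s$-measurable by adaptedness. Hence, using Fubini, Doob's (or Burkholder's) maximal inequality and Brownian scaling,
\[
\mathbb E\,G_s^q\Vert F(s)\Vert_X^q=\mathbb E G_s^q\,\mathbb E\Vert F(s)\Vert_X^q\le C_q(T-s)^{q/2}\,\mathbb E(F^*)^q.
\]
Integrating over $s\in[a,T]$ gives at most $C_q(T-a)^{q/2+1}\mathbb E(F^*)^q$. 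Collecting the powers,
\[
(T-a)^{\frac1{q'}-\frac12}\cdot(T-a)^{\frac12+\frac1q}=(T-a)=\sqrt{T-a}\cdot\sqrt{T-a},
\]
and multiplying by the prefactor $\sqrt\nu$ gives exactly $C_{p,q}\sqrt{T-a}\,\sqrt{\nu(T-a)}\,\Vert F^*\Vert_{L^q}$. Adding the $\sqrt2$ contribution yields \eqref{eq:expected-duhamel}. Joint measurability in $(s,\omega)$ of the integrand is routine, since $F$ has continuous paths. I expect this independence step to be the main obstacle, specifically making sure the random factor is bounded by something that depends only on future increments after time $s$ and not on $t$.

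\emph{Continuity in $X$.} Fix a path and write $I(t):=\int_a^tS_\nu(t,s)\nabla\cdot F(s)\,ds$. For $t'>t$,
\[
I(t')-I(t)=\big(S_\nu(t',t)-\mathrm{Id}\big)I(t)+\int_t^{t'}S_\nu(t',s)\nabla\cdot F(s)\,ds .
\]
The first term tends to $0$ as $t'\downarrow t$ by \eqref{strong_continuity_semigroup}. The second term is bounded by $C\Lambda_T^{1/2}\nu^{-1/2}\sqrt{t'-t}\,F^*\to0$.

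For left limits, the piece $\int_{t'}^{t}S_\nu(t,s)\nabla\cdot F(s)\,ds$ is small by the same integrable-kernel bound. For the remaining piece $\int_a^{t'}\big(S_\nu(t,s)-S_\nu(t',s)\big)\nabla\cdot F(s)\,ds$, I would approximate $F$ by fields that are smooth in space, uniformly on $[a,T]$, and then use dominated convergence with the integrable majorant $C\Lambda_T^{1/2}(\nu(t-s))^{-1/2}F^*$.
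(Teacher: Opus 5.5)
\textbf{Main estimates.} Your proofs of \eqref{eq:expected-duhamel} and \eqref{eq:expected-homogeneous} are correct and follow the paper's route. The steps are the same: the pathwise bound from \eqref{eq:pathwise-divergence-bound}, a $t$-independent random factor that depends only on increments after $s$, H\"older in time with $q'<2$, factorization of the expectation by independence from $\mathcal F_s$, and Doob's inequality. For the random factor the paper uses $\Gamma^*_{s,T}=\sup_{s<t\le T}\Gamma_{t,s}$ with $\mathbb E(\Gamma^*_{s,T})^q\le C_q(1+(T-a)^{q/2})$. You instead use $\sqrt2+G_s$ and handle the constant part without H\"older, which is only a bookkeeping difference.

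\textbf{Left continuity.} This step fails as written. For $t'<t$, the integrand of $\int_a^{t'}\bigl(S_\nu(t,s)-S_\nu(t',s)\bigr)\nabla\cdot F(s)\,ds$ is only bounded by $C\Lambda_T^{1/2}\nu^{-1/2}\bigl[(t-s)^{-1/2}+(t'-s)^{-1/2}\bigr]F^*$. The second singularity sits at $s=t'$ and moves with $t'$. So $(t-s)^{-1/2}$ does not dominate it, and there is no fixed integrable majorant on $[a,t']$. You can repair this in either of two ways.
\begin{itemize}
\item Use the paper's cut. On $[t-\varepsilon,t']$ both terms are $O\bigl(\Lambda_T^{1/2}(\varepsilon/\nu)^{1/2}F^*\bigr)$, uniformly in $t'$. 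On $[a,t-\varepsilon]$ you have $t'-s\ge\varepsilon/2$ once $t'>t-\varepsilon/2$, so the majorant is bounded.
\item Keep your spatial smoothing, but use the contraction bound $\Vert S_\nu(t',s)\nabla\cdot F^\delta(s)\Vert_X\le\Vert\nabla\cdot F^\delta(s)\Vert_X\le C_\delta F^*$. This is a genuine majorant for the smoothed part.
\end{itemize}

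\textbf{Pointwise convergence.} The convergence $S_\nu(t',s)g\to S_\nu(t,s)g$ as $t'\uparrow t$, for fixed $s<t$, does not follow from \eqref{strong_continuity_semigroup} and the cocycle property. In $S_\nu(t,s)-S_\nu(t',s)=(S_\nu(t,t')-\mathrm{Id})S_\nu(t',s)$, both the starting time and the vector move, so this identity only yields right continuity. The paper gets left continuity from the representation $S_\nu(r,s)\nabla\cdot F=(\nabla K_{r,s}*F)(x-yB^s_r,y)$. It uses two facts: $r\mapsto\nabla K_{r,s}$ is continuous in $L^1$ while $Q_{r,s}$ stays uniformly nondegenerate (for $r-s\ge\varepsilon$), and the shear $f\mapsto f(x-yb,y)$ is strongly continuous in $b$. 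The same caveat applies to your remark that $t\mapsto S_\nu(t,a)f$ is continuous in $X$.

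These are local repairs; the moment estimates themselves are sound.
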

\begin{proof}
For $a\le s<T$, set  $\Gamma^*_{s,T}=\sup_{s<t\le T}\Gamma_{t,s}.$
By using \eqref{eq:pathwise-divergence-bound} in $L^1$ and $L^p$, we have
\begin{equation}\label{eq:pathwise-duhamel-integral}
\bigg\Vert {\nu\int_a^tS_\nu(t,s)\nabla\cdot F(s)\,d s}\bigg\Vert_{X}
 \le C\sqrt\nu\int_a^t(t-s)^{-1/2}
 \Gamma^*_{s,T}\Vert {F(s)}\Vert_{X}\,d s,
\end{equation}
where $X$ is given in \eqref{eq:X-definition}.
For $q'=q/(q-1)<2$, we apply H\"older's inequality in time to obtain
\begin{align}
 &\int_a^t(t-s)^{-1/2}\Gamma^*_{s,T}
\Vert{F(s)}\Vert_{X}\,d s\notag\\
 &\quad\le C_q(T-a)^{1/2-1/q}
 \left(\int_a^T(\Gamma^*_{s,T})^q
\Vert{F(s)}\Vert_{X}^q\,d s\right)^{1/q}.
 \label{eq:time-holder-maximal}
\end{align}
After raising to the $q$-th power, we have
\begin{align}
 &\mathbb E\sup_{a\le t\le T}
\Vert{\nu\int_a^tS_\nu(t,s)\nabla\cdot F(s)\,d s}\Vert_{X}^q\notag\\
 &\quad\le C_q\nu^{q/2}(T-a)^{q/2-1}
 \int_a^T\mathbb E[(\Gamma^*_{s,T})^q\Vert{F(s)}\Vert_{X}^q]\,\,d s.
 \label{eq:before-independence}
\end{align}
For deterministic $s$, $F(s)$ is $\mathcal F_s$-measurable, whereas
$\Gamma^*_{s,T}$ is a functional only of the increments after $s$.
Therefore,
\begin{equation}\label{eq:conditional-factorization}
 \mathbb E[(\Gamma^*_{s,T})^q\Vert{F(s)}\Vert_{X}^q]
 =\mathbb E[(\Gamma^*_{s,T})^q]\mathbb E[\Vert{F(s)}\Vert_{X}^q].
\end{equation}
In light of  \eqref{eq:Gamma-definition}, we have
\begin{align}\label{Gammastar_estimate}
 (\Gamma^*_{s,T})^2
 \le2+\sup_{s\le r\le T}|W_r-W_s|^2.
\end{align}
By using Doob's maximal inequality and rescaling of Brownian motion, one finds
\begin{equation}\label{eq:Gamma-star-moment}
 \sup_{a\le s<T}\mathbb E[(\Gamma^*_{s,T})^q]
 \le C_q\bigl(1+(T-a)^{q/2}\bigr).
\end{equation}
Indeed, we have from Doob's \(L^q\) maximal inequality that
\begin{align*}
\mathbb E\left[
\sup_{s\le r\le T}|W_r-W_s|^q
\right]
&\leq
\left(\frac{q}{q-1}\right)^q
\mathbb E|W_T-W_s|^q.
\end{align*}
Using the rescaling of Brownian motion, one has $W_T-W_s
\overset{\mathrm{law}}=
\sqrt{T-s}\,Z$ with $Z\sim N(0,1),$
and therefore
\[
\mathbb E|W_T-W_s|^q
=
(T-s)^{q/2}\mathbb E|Z|^q
\le
C_q(T-s)^{q/2},
\]
where $C_q>0$ is a constant.
Consequently,
\begin{equation}\label{eq:Brownian-maximal-moment}
\mathbb E\left[
\sup_{s\le u\le T}|W_u-W_s|^q
\right]
\le
C_q(T-s)^{q/2}.
\end{equation}
In addition, we have from (\ref{Gammastar_estimate}) that
\begin{align}\label{q-moment-Gammastar}
(\Gamma^*_{s,T})^q
\le
C_q
\left(
1+\sup_{s\le r\le T}|W_r-W_s|^q
\right).
\end{align}
Taking expectations in (\ref{q-moment-Gammastar}) and using
\eqref{eq:Brownian-maximal-moment}, we obtain
\[
\mathbb E[(\Gamma^*_{s,T})^q]
\le
C_q\left(1+(T-s)^{q/2}\right).
\]
Since \(a\le s<T\), one has
\[
\sup_{a\le s<T}
\mathbb E[(\Gamma^*_{s,T})^q]
\le
C_q\left(1+(T-a)^{q/2}\right).
\]
Substituting \eqref{eq:conditional-factorization} and
\eqref{eq:Gamma-star-moment} into \eqref{eq:before-independence}, we obtain
\begin{align*}
&\mathbb E\sup_{a\le t\le T}
\Vert{
\nu\int_a^t
S_\nu(t,s)\nabla\cdot F(s)\,d s
}\Vert_{X}^q\\
&\quad\le
C_q\nu^{q/2}(T-a)^{q/2-1}
\int_a^T
\mathbb E[(\Gamma^*_{s,T})^q]\,
\mathbb E\Vert{F(s)}\Vert_X^q\,d s\\
&\quad\le
C_q\nu^{q/2}(T-a)^{q/2-1}
\left(1+(T-a)^{q/2}\right)
\int_a^T
\mathbb E\Vert{F(s)}\Vert_X^q\,d s.
\end{align*}
Since $\mathbb E\Vert{F(s)}\Vert_X^q
\le
\mathbb E\sup_{a\le r\le T}\Vert{F(r)}\Vert_{X}^q,$
we have
\begin{align*}
\int_a^T\mathbb E\Vert{F(s)}\Vert_{X}^q\,d s
&\le
(T-a)
\mathbb E\sup_{a\le r\le T}\Vert{F(r)}\Vert_{X}^q.
\end{align*}
Consequently,
\begin{align*}
&\mathbb E\sup_{a\le t\le T}
\bigg\Vert{
\nu\int_a^t
S_\nu(t,s)\nabla\cdot F(s)\,d s
}\bigg\Vert_{X}^q\\
&\quad\le
C_q\nu^{q/2}(T-a)^{q/2}
\left(1+(T-a)^{q/2}\right)
\mathbb E\sup_{a\le r\le T}\Vert{F(r)}\Vert_{X}^q.
\end{align*}
Taking the \(q\)-th root and using $\left(1+(T-a)^{q/2}\right)^{1/q}
\le
1+\sqrt{T-a},$ we arrive at
\begin{align*}
&\left(
\mathbb E\sup_{a\le t\le T}
\Vert{
\nu\int_a^t
S_\nu(t,s)\nabla\cdot F(s)\,d s
}\Vert_{X}^q
\right)^{1/q}\\
&\quad\le
C_{p,q}(1+\sqrt{T-a})\sqrt{\nu(T-a)}
\left(
\mathbb E\sup_{a\le s\le T}\Vert{F(s)}\Vert_{X}^q
\right)^{1/q},
\end{align*}
which proves \eqref{eq:expected-duhamel}.

Define the Duhamel term
\[
Y(t)
=
\nu\int_a^tS_\nu(t,s)\nabla\cdot F(s)\,\,d s,
\qquad a\le t\le T,
\]
then we prove that \(Y\) has continuous paths in \(X\).  We first prove right continuity. Let \(h>0\) be such that \(t+h\le T\), then we obtain
\begin{align*}
Y(t+h)
&=
\nu\int_a^t
S_\nu(t+h,s)\nabla\cdot F(s)\,d s\\
&\qquad
+
\nu\int_t^{t+h}
S_\nu(t+h,s)\nabla\cdot F(s)\,d s.
\end{align*}
For \(a\le s\le t\), the fact $S_\nu(t+h,s)
=
S_\nu(t+h,t)S_\nu(t,s)$ implies
\begin{align}
Y(t+h)
&=
S_\nu(t+h,t)Y(t)\notag\\
&\qquad+
\nu\int_t^{t+h}
S_\nu(t+h,s)\nabla\cdot F(s)\,d s.
\label{eq:Duhamel-right-evolution}
\end{align}
It follows that  
\begin{align}
Y(t+h)-Y(t)
&=
\bigl(S_\nu(t+h,t)-I\bigr)Y(t)\notag\\
&\qquad+
\nu\int_t^{t+h}
S_\nu(t+h,s)\nabla\cdot F(s)\,d s.
\label{eq:Duhamel-right-difference}
\end{align}
 As shown in Proposition \ref{prop:pathwise-kernel}, we have \(S_\nu\) is strongly continuous on both
\(L^1(\Omega)\) and \(L^p(\Omega)\).  Moreover, since \(Y(t)\in X\), we obtain
\[
\Vert{
\bigl(S_\nu(t+h,t)-I\bigr)Y(t)
}\Vert_{X}
\rightarrow0
~\text{as }h\downarrow0.
\]
In addition, for the second term in the right hand side of \eqref{eq:Duhamel-right-difference}, one finds from  (\ref{eq:pathwise-divergence-bound}) that
\begin{align*}
&\Vert{
\nu\int_t^{t+h}
S_\nu(t+h,s)\nabla\cdot F(s)\,d s
}\Vert_{X}\\
&\quad\le
C\sqrt\nu
\int_t^{t+h}
(t+h-s)^{-1/2}
\Gamma_{t+h,s}
\Vert{F(s)}\Vert_{X}\,d s\\
&\quad\le
C\sqrt\nu
\left(
\sup_{t\le s<u\le t+h}\Gamma_{u,s}
\right)
\left(
\sup_{t\le s\le t+h}\Vert{F(s)}\Vert_{X}
\right)
\int_t^{t+h}(t+h-s)^{-1/2}\,d s\\
&\quad=
2C\sqrt{\nu h}
\left(
\sup_{t\le s<u\le t+h}\Gamma_{u,s}
\right)
\left(
\sup_{t\le s\le t+h}\Vert{F(s)}\Vert_{X}
\right).
\end{align*}
For \(t\le s<u\le t+h\),  we conclude from the definition of \(\Gamma_{u,s}\) shown in (\ref{eq:Gamma-definition}) that
\begin{align*}
\Gamma_{u,s}^2
&=
2+\frac1{u-s}
\int_s^u|W_r-W_s|^2\,d r\\
&\le
2+\sup_{s\le r\le u}|W_r-W_s|^2\le
2+\sup_{t\le r_1,r_2\le t+h}
|W_{r_1}-W_{r_2}|^2.
\end{align*}
Consequently,
\[
\sup_{t\le s<u\le t+h}\Gamma_{u,s}
\le
\left(
2+\sup_{t\le r_1,r_2\le t+h}
|W_{r_1}-W_{r_2}|^2
\right)^{1/2}.
\]
By continuity of the Brownian path, we obtain $\sup_{t\le r_1,r_2\le t+h}
|W_{r_1}-W_{r_2}|
\rightarrow0$ as $h\downarrow0.$
Moreover, the assumption $\mathbb E\sup_{a\le s\le T}\Vert{F(s)}\Vert_{X}^q<\infty$
implies $\sup_{a\le s\le T}\Vert{F(s)}\Vert_{X}<\infty$
\text{almost surely}.  It follows that
\[
\Vert{
\nu\int_t^{t+h}
S_\nu(t+h,s)\nabla\cdot F(s)\,d s
}\Vert_{X}
\rightarrow0
\qquad\text{almost surely}.
\]
Together with \eqref{eq:Duhamel-right-difference}, this proves $\Vert{Y(t+h)-Y(t)}\Vert_{X}
\rightarrow0$\text{ as }$h\downarrow0$.  We next prove left continuity. Fix \(t\in(a,T]\) and
\(0<\varepsilon<t-a\). For \(0<h<\varepsilon/2\), we calculate
\begin{align}
Y(t)-Y(t-h)
&=
\nu\int_a^{t-\varepsilon}
\left[
S_\nu(t,s)-S_\nu(t-h,s)
\right]
\nabla\cdot F(s)\,d s\notag\\
&\qquad+
\nu\int_{t-\varepsilon}^{t}
S_\nu(t,s)\nabla\cdot F(s)\,d s\notag\\
&\qquad-
\nu\int_{t-\varepsilon}^{t-h}
S_\nu(t-h,s)\nabla\cdot F(s)\,d s.
\label{eq:Duhamel-left-decomposition}
\end{align}
We use (\ref{prop:pathwise-kernel}) to obtain
\begin{align}
&\Vert{
\nu\int_{t-\varepsilon}^{t}
S_\nu(t,s)\nabla\cdot F(s)\,d s
}\Vert_{X}\notag\\
&\quad+
\Vert{
\nu\int_{t-\varepsilon}^{t-h}
S_\nu(t-h,s)\nabla\cdot F(s)\,d s
}\Vert_{X}\notag\\
&\le
4C\sqrt{\nu\varepsilon}
\left(
\sup_{t-\varepsilon\le s<u\le t}\Gamma_{u,s}
\right)
\left(
\sup_{a\le s\le T}\Vert{F(s)}\Vert_{X}
\right).
\label{eq:Duhamel-left-near}
\end{align}
For a fixed continuous Brownian path, $\sup_{t-\varepsilon\le s<u\le t}\Gamma_{u,s}$
remains bounded as \(\varepsilon\downarrow0\).   Then, we consider the first term in the right hand side of
\eqref{eq:Duhamel-left-decomposition}. For every fixed
\(s\le t-\varepsilon\), we obtain  $S_\nu(r,s)\nabla\cdot F(s)
=
\bigl(\nabla K_{r,s}*F(s)\bigr)
(x-yB_r^s,y).$
As \(r\to t\), $B_r^s\rightarrow B_t^s$
by the continuity of Brownian motion  and $Q_{r,s}\rightarrow Q_{t,s}$
by the definition of \(Q_{r,s}\) given in (\ref{eq:Q-explicit}). Since \(t-s\ge\varepsilon\),
the covariance matrices remain uniformly positive definite for \(r\)
sufficiently close to \(t\). Invoking the  dominated
convergence theorem, we obtain $\Vert{
\nabla K_{r,s}-\nabla K_{t,s}
}\Vert_{L^1(\Omega)}
\rightarrow0.$
The strong continuity of the transform on \(L^1\) and \(L^p\), together with
Young's inequality, then implies as $r\rightarrow t,$ $\Vert{
S_\nu(r,s)\nabla\cdot F(s)
-
S_\nu(t,s)\nabla\cdot F(s)
}\Vert_{X}
\rightarrow0$
for every fixed \(s\le t-\varepsilon\).  Moreover, for \(r\) sufficiently close to \(t\) and
\(s\le t-\varepsilon\), thanks to estimate (\ref{eq:pathwise-divergence-bound}), we find 
\[
\Vert{S_\nu(r,s)\nabla\cdot F(s)}\Vert_{X}
\le
C_{\nu,T,\varepsilon}\Vert{F(s)}\Vert_{X}.
\]
Since $\sup_{a\le s\le T}\Vert{F(s)}\Vert_{X}<\infty$ \text{almost surely}, the dominated convergence theorem yields
\begin{align*}
&\Vert{
\nu\int_a^{t-\varepsilon}
\left[
S_\nu(t,s)-S_\nu(t-h,s)
\right]
\nabla\cdot F(s)\,d s
}\Vert_{X}\rightarrow0
\text{ as }h\downarrow0.
\end{align*}
Collecting the above estimates, we first let
\(h\downarrow0\) with \(\varepsilon\) fixed, and then let
\(\varepsilon\downarrow0\) in the right hand side of \eqref{eq:Duhamel-left-decomposition}, finally  conclude that
\[
\Vert{Y(t-h)-Y(t)}\Vert_{X}
\rightarrow0
\text{ as }h\downarrow0.
\]
Thus, we have $Y$ is left-continuous in $X$.  In summary, \(Y\) has continuous paths in \(X\).


 It remains to prove  \eqref{eq:expected-homogeneous} holds.
Let \(f\in L^q(\mathfrak O;X)\) be
\(\mathcal F_a\)-measurable. For every fixed Brownian path and every
\(t\in[a,T]\),  thanks to \eqref{eq:Lr-contraction}, we have the following estimates hold:
\[
\Vert{S_\nu(t,a)f}\Vert_{L^1(\Omega)}
\le
\Vert{f}\Vert_{L^1(\Omega)},~\Vert{S_\nu(t,a)f}\Vert_{L^p(\Omega)}
\le\Vert{f}\Vert_{L^p(\Omega)}.
\]
Adding these two inequalities, we obtain
\begin{align*}
\Vert{S_\nu(t,a)f}\Vert_{X}
&=
\Vert{S_\nu(t,a)f}\Vert_{L^1}
+
\Vert{S_\nu(t,a)f}\Vert_{L^p(\Omega)}\\
&\le
\Vert{f}\Vert_{L^1}+\Vert{f}\Vert_{L^p(\Omega)}\\
&=
 \Vert{f}\Vert_{X}.
\end{align*}
Since this estimate holds for every \(t\in[a,T]\), it follows 
that $\sup_{a\le t\le T}
\Vert{S_\nu(t,a)f}\Vert_{X}^q
\le
 \Vert{f}\Vert_{X}^q.$
Taking expectations gives $\mathbb E\sup_{a\le t\le T}
\Vert{S_\nu(t,a)f}\Vert_{X}^q
\le
\mathbb E \Vert{f}\Vert_{X}^q.$
Taking the \(q\)-th root, we conclude that
\[
\left(
\mathbb E\sup_{a\le t\le T}
\Vert{S_\nu(t,a)f}\Vert_{X}^q
\right)^{1/q}
\le
\left(
\mathbb E\Vert{f}\Vert_{X}^q
\right)^{1/q}
=\Vert{f}\Vert_{L^q(\mathfrak O;X)}.
\]
This proves \eqref{eq:expected-homogeneous}.


\end{proof}
 
\endgroup

\section{Local-in-time existence}
This section is devoted to the existence of maximal local mild solution to (\ref{eq:KS-local}).  First of all, we consider the cutoff system and show the existence of the unique global-in-time via the Banach fixed point theorem.
\subsection{Global existence of cutoff systems}
Choose $\theta\in C^1([0,\infty);[0,1])$ with
\begin{equation*}
 \theta=1\text{ on }[0,1],\qquad\theta=0\text{ on }[2,\infty),
 \qquad\Vert{\theta'}\Vert_{L^\infty}\le C,
\end{equation*}
and set $\theta_m(r)=\theta(r/m)$. For a continuous path, we define
\begin{equation}\label{eq:history-norm}
\Vert{u}\Vert_{X_t}=\sup_{0\le s\le t}\Vert{u(s)}\Vert_{X}.
\end{equation}
We then obtain the following cutoff estimates:
\begin{lemma}\label{lem:cutoff-global-lipschitz}
For $u,v\in X$, we have
\begin{align}
\Vert{\theta_m(\Vert{u}\Vert_{X_t})u(t)\nabla(-\Delta)^{-1}u(t)}\Vert_{X}
 \le4C_pm^2&,\label{eq:cutoff-global-bound}
 \end{align}
 and
 \begin{align}
&\Vert{\theta_m(\Vert{u}\Vert_{X_t})u(t)\nabla(-\Delta)^{-1}u(t)
 -\theta_m(\Vert{v}\Vert_{X_t})v(t)\nabla(-\Delta)^{-1}v(t)}\Vert_{X}\nonumber\\
 \le& C_pm\sup_{0
\le s\le t}\Vert{u(s)-v(s)}\Vert_{X},
 \label{eq:cutoff-global-lipschitz}
\end{align}
where $X_t$ is given by \eqref{eq:history-norm}.
\end{lemma}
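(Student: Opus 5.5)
The plan is to reduce both bounds to Lemma \ref{lem:X-elliptic} and to use only two properties of the cutoff: $\theta_m(r)=0$ for $r\ge 2m$, and $|\theta_m(r)-\theta_m(r')|\le \frac{C}{m}|r-r'|$, which follows from $\|\theta'\|_{L^\infty}\le C$. Write $N(u):=u\nabla(-\Delta)^{-1}u$ and $a:=\|u\|_{X_t}$, $b:=\|v\|_{X_t}$.

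For \eqref{eq:cutoff-global-bound}, if $a\ge 2m$ the left side vanishes. Otherwise $\|u(t)\|_X\le a<2m$, and \eqref{eq:X-quadratic} gives
\[
\|\theta_m(a)N(u(t))\|_X\le C_p\|u(t)\|_X^2\le 4C_pm^2 .
\]

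For \eqref{eq:cutoff-global-lipschitz}, I would split into cases according to $a$ and $b$. If $a,b\ge 2m$, both terms vanish. By symmetry, it remains to treat $a\le b$ with $a<2m$. Use the decomposition
\[
\theta_m(a)N(u)-\theta_m(b)N(v)=\bigl(\theta_m(a)-\theta_m(b)\bigr)N(u)+\theta_m(b)\bigl(N(u)-N(v)\bigr),
\]
evaluated at time $t$.

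\emph{First term.} Since $a<2m$, \eqref{eq:X-quadratic} bounds it by $\frac{C}{m}|a-b|\cdot C_p(2m)^2\le C_p m|a-b|$. The reverse triangle inequality for sup-norms gives $|a-b|\le\sup_{s\le t}\|u(s)-v(s)\|_X$.

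\emph{Second term.} If $b\ge 2m$ it vanishes. Otherwise $\|u(t)\|_X,\|v(t)\|_X<2m$, and \eqref{eq:X-difference} bounds it by $4C_pm\|u(t)-v(t)\|_X$.

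Adding the two pieces and absorbing constants into $C_p$ yields \eqref{eq:cutoff-global-lipschitz}.

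The main obstacle is to order the case split so that a quadratic factor $N(u)$ is only ever multiplied by a cutoff evaluated at a point where that same function's norm is below $2m$. This is why I assume $a\le b$ and attach the cutoff difference to $N(u)$, the factor with the smaller history norm. If instead $a<2m\le b$ and we attached it to $N(v)$, we would lose control, because $\|v(t)\|_X$ could be arbitrarily large. With the ordering chosen above, every case is covered.

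One further point: the constant obtained in the Lipschitz estimate is $C_p m$ times absolute constants from $\theta$. This is consistent with the statement's convention that $C_p$ may absorb those constants.
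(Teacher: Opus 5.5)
Your proposal is correct and follows the paper's proof essentially step for step. Both arguments bound \eqref{eq:cutoff-global-bound} via \eqref{eq:X-quadratic} and the support of $\theta_m$. For \eqref{eq:cutoff-global-lipschitz}, both assume without loss of generality $\|u\|_{X_t}\le\|v\|_{X_t}$, attach the cutoff difference to $u\nabla(-\Delta)^{-1}u$, and use the $C/m$ Lipschitz bound for $\theta_m$ together with \eqref{eq:X-quadratic} and \eqref{eq:X-difference}.
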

\begin{proof}
 Since \(0\le\theta_m\le1\), invoking H\"older's inequality and
\eqref{eq:X-Poisson}, we have for \(r=1,p\),
\begin{align*}
&\Vert{
\theta_m(\Vert{u}\Vert_{X_t})
u(t)\nabla(-\Delta)^{-1}u(t)
}\Vert_{L^r(\Omega)}\\
&\quad\le
\theta_m( \Vert{u}\Vert_{X_t})
\Vert{u(t)}\Vert_{L^r(\Omega)}
 \Vert{
\nabla(-\Delta)^{-1}u(t)
}\Vert_{L^\infty(\Omega)}\\
&\quad\le
C_p\theta_m( \Vert{u}\Vert_{X_t})
 \Vert{u(t)}\Vert_{L^r(\Omega)}
 \Vert{u(t)}\Vert_{X}.
\end{align*}
Adding the estimates for \(r=1\) and \(r=p\), we obtain
\begin{align*}
&\Vert{
\theta_m(\Vert{u}\Vert_{X_t})
u(t)\nabla(-\Delta)^{-1}u(t)
}\Vert_{X}\le
C_p\theta_m(\Vert{u}\Vert_{X_t})
 \Vert{u(t)}\Vert_X^2.
\end{align*}
If \(\theta_m(\Vert{u}\Vert_{X_t})=0\), then we have (\ref{eq:cutoff-global-bound}) holds.
If \(\theta_m(\Vert{u}\Vert_{X_t})\neq0\), then the definition of the cutoff
implies $\Vert {u}\Vert_{X_t}<2m.$
Since $\Vert{u(t)}\Vert_{X}
\le
\Vert {u}\Vert_{X_t},$
we conclude that
\begin{align*}
&\Vert{
\theta_m(\Vert {u}\Vert_{X_t})
u(t)\nabla(-\Delta)^{-1}u(t)
}\Vert_{X}\\
&\quad\le
C_p\Vert {u(t)}\Vert_{X}^2\le
C_p(2m)^2=
4C_pm^2.
\end{align*}
This proves \eqref{eq:cutoff-global-bound}.

We have the fact that  $|\Vert {u}\Vert_{X_t}-\Vert{v}\Vert_{X_t}|\le \sup_{s\le t} \Vert {u(s)-v(s)}\Vert_{X}$, and then  the mean-value theorem  yields
\begin{align}\label{lip_estimate}
|\theta_m(\Vert {u}\Vert_{X_t})-\theta_m(\Vert {v}\Vert_{X_t})|
\le
\frac Cm|\Vert {u}\Vert_{X_t}-\Vert {v}\Vert_{X_t}|
\le
\frac Cm\sup_{s\le t}\Vert {u(s)-v(s)}\Vert_{X}.
\end{align}
Without loss of generality,  we assume $\Vert{u}\Vert_{X_t}\le  \Vert {v}\Vert_{X_t}$ and use the following identity:
\begin{align}\label{right_hand_side_vanishing}
 &\theta_m(\Vert{u}\Vert_{X_t})u\nabla(-\Delta)^{-1}u
 -\theta_m(\Vert {v}\Vert_{X_t})v\nabla(-\Delta)^{-1}v\nonumber\\
 &=\theta_m(\Vert {v}\Vert_{X_t})[u\nabla(-\Delta)^{-1}u
 -v\nabla(-\Delta)^{-1}v]
 \nonumber\\
 &+[\theta_m(\Vert{u}\Vert_{X_t})-\theta_m(\Vert {v}\Vert_{X_t})]u\nabla(-\Delta)^{-1}u.
\end{align}
If \(\theta_m(\Vert {v}\Vert_{X_t})=0\), the first term in the right-hand side of (\ref{right_hand_side_vanishing}) vanishes.
Otherwise, \(\theta_m(\Vert {v}\Vert_{X_t})\neq0\) and the definition of the cutoff implies $\Vert {v}\Vert_{X_t}<2m.$  Since \(\Vert {u}\Vert_{X_t}\le \Vert {v}\Vert_{X_t}\), one also has \(\Vert {u}\Vert_{X_t}<2m\). Therefore, $\Vert {u(t)}\Vert_{X}<2m$, $\Vert {v(t)}\Vert_{X}  <2m$
and $\Vert {u(t)-v(t)}\Vert_{X}\le \sup_{s\le t} \Vert {u(s)-v(s)}\Vert_{X}$.  By using \eqref{eq:X-difference}, we obtain
\begin{align}\label{combine_1_estimate}
&\Vert {
\theta_m(\Vert {v}\Vert_{X_t}) 
\left[
u(t)\nabla(-\Delta)^{-1}u(t)
-v(t)\nabla(-\Delta)^{-1}v(t)
\right]
}\Vert_{X}\nonumber\\
&\quad\le
C_p
\left(
\Vert {u(t)}\Vert_{X}+\Vert {v(t)}\Vert_{X}
\right)
 \Vert {u(t)-v(t)}\Vert_{X}\nonumber\\
&\quad\le
C_p(\Vert {u}\Vert_{X_t}+\Vert {v}\Vert_{X_t})\sup_{s\le t} \Vert {u(s)-v(s)}\Vert_{X}\le4C_pm\sup_{s\le t} \Vert {u(s)-v(s)}\Vert_{X}.
\end{align}

If \(\theta_m(\Vert {u}\Vert_{X_t})-\theta_m(\Vert {v}\Vert_{X_t})=0\), the second term in the right hand side of (\ref{right_hand_side_vanishing}) vanishes. Otherwise,
one must have \(\Vert {u}\Vert_{X_t}<2m\). 
Using the Lipschitz estimate for the cutoff shown in \eqref{lip_estimate}
and \eqref{eq:X-quadratic}, we obtain
\begin{align}\label{combine_2_estimate}
&\Vert{
\left[
\theta_m(a)-\theta_m(b)
\right]
u(t)\nabla(-\Delta)^{-1}u(t)
}\Vert_{X}\nonumber\\
&\quad\le
|\theta_m(a)-\theta_m(b)|
\Vert{
u(t)\nabla(-\Delta)^{-1}u(t)
}\Vert_{X}\nonumber\\
&\quad\le
\frac Cm\sup_{s\le t}\Vert {u(s)-v(s)}\Vert_{X}\,C_p\Vert{u(t)}\Vert_{X}^2\le
4CC_pm\sup_{s\le t}\Vert {u(s)-v(s)}\Vert_{X},
\end{align}
where $C$ and $C_p$ are two positive constants.
Combining (\ref{combine_1_estimate}) and (\ref{combine_2_estimate}), we
conclude that
\[
\begin{aligned}
&\Vert {
\theta_m(\Vert{u}\Vert_{X_t})
u(t)\nabla(-\Delta)^{-1}u(t)
-
\theta_m(\Vert{v}\Vert_{X_t})
v(t)\nabla(-\Delta)^{-1}v(t)
}\Vert_{X}\\
&\le
C_pm
\sup_{0\le s\le t}\Vert {u(s)-v(s)}\Vert_{X}.
\end{aligned}
\]
\end{proof}

Now, we are ready to establish the global mild solution to the cutoff system.  In light of \eqref{eq:untruncated-mild-local} and Definition \ref{def:local-mild}, we have the mild equation of the cutoff system that satisfies 
\begin{equation}\label{eq:cutoff-mild-global}
 n^m(t)=S_\nu(t,0)n_{\rm in}
 -\nu\int_0^tS_\nu(t,s)\nabla\cdot[
 \theta_m(\Vert{n^m}\Vert_{X_s})n^m(s)\nabla(-\Delta)^{-1}n^m(s)]\,d s,
\end{equation}
then we obtain
\begin{lemma}\label{prop:global-cutoff}
For every integer $m\ge1$ and $T<\infty$, \eqref{eq:cutoff-mild-global} admits
a unique global adapted solution
$n^m\in L^q(\mathfrak O;C([0,T];X))$ for every $q\ge2$.  
\end{lemma}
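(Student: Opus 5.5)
We will obtain $n^m$ as the fixed point of the map
\[
\mathcal T u(t)=S_\nu(t,0)n_{\rm in}-\nu\int_0^tS_\nu(t,s)\nabla\cdot\bigl[\theta_m(\Vert u\Vert_{X_s})u(s)\nabla(-\Delta)^{-1}u(s)\bigr]\,ds
\]
on the space $\mathcal S^q_{a,a+\delta}$ of \eqref{eq:expected-sup-space}, first for $q>2$ (the case $q=2$ then follows from $q>2$ by H\"older in $\omega$). Global existence on $[0,T]$ will come from iterating over a finite partition $0=t_0<t_1<\dots<t_N=T$ of deterministic step $\delta$. The crucial point is that $\delta$ depends only on $m,p,q,\nu,T$ and not on the data, because the cutoff makes the nonlinearity globally Lipschitz.

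\textbf{Well-definedness of $\mathcal T$ on $\mathcal S^q_{0,\delta}$.} For $u\in\mathcal S^q_{0,\delta}$ the process $F(s):=\theta_m(\Vert u\Vert_{X_s})u(s)\nabla(-\Delta)^{-1}u(s)$ is adapted, since $\Vert u\Vert_{X_s}$ depends only on the path up to time $s$. By \eqref{eq:cutoff-global-bound} it satisfies $\Vert F(s)\Vert_X\le 4C_pm^2$ deterministically. We then apply Proposition~\ref{prop:expected-maximal}:
\begin{itemize}
\item \eqref{eq:expected-homogeneous} bounds the linear term by $\Vert n_{\rm in}\Vert_X$, which is finite by Lemma~\ref{lem:X-elliptic};
\item \eqref{eq:expected-duhamel} bounds the Duhamel term by $C_{p,q}(1+\sqrt T)\sqrt{\nu\delta}\,4C_pm^2$;
\item the same proposition, together with strong continuity \eqref{strong_continuity_semigroup} for the linear part, gives pathwise continuity in $X$.
\end{itemize}
Adaptedness of $\mathcal T u$ holds because $S_\nu(t,s)$ depends only on the increments of $W$ on $[s,t]$.

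\textbf{Contraction.} For $u,v$ with the same initial datum, apply \eqref{eq:expected-duhamel} to the difference $F_u-F_v$. By \eqref{eq:cutoff-global-lipschitz},
\[
\sup_s\Vert F_u(s)-F_v(s)\Vert_X\le C_pm\sup_{s}\Vert u(s)-v(s)\Vert_X,
\]
and therefore
\[
\Vert\mathcal T u-\mathcal T v\Vert_{\mathcal S^q_{0,\delta}}\le C_{p,q}(1+\sqrt T)\sqrt{\nu\delta}\,C_pm\,\Vert u-v\Vert_{\mathcal S^q_{0,\delta}}.
\]
We choose $\delta$ so that this constant is at most $1/2$. Banach's fixed point theorem then gives a unique solution on $[0,\delta]$.

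\textbf{Iteration and the main obstacle.} The main obstacle is the iteration, because the cutoff depends on the whole history $\Vert u\Vert_{X_s}=\sup_{0\le r\le s}\Vert u(r)\Vert_X$, not only on the current block. On $[t_k,t_{k+1}]$ we therefore work in the closed affine subspace of $\mathcal S^q_{0,t_{k+1}}$ of processes that agree with the already constructed solution on $[0,t_k]$. We rewrite the mild equation using the evolution property $S_\nu(t,s)=S_\nu(t,t_k)S_\nu(t_k,s)$ as
\[
n^m(t)=S_\nu(t,t_k)n^m(t_k)-\nu\int_{t_k}^tS_\nu(t,s)\nabla\cdot F(s)\,ds .
\]
Here $n^m(t_k)$ is $\mathcal F_{t_k}$-measurable and lies in $L^q(\mathfrak O;X)$, so \eqref{eq:expected-homogeneous} applies with $a=t_k$. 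The difference of two candidates vanishes on $[0,t_k]$, so $\sup_{s\le t}\Vert u(s)-v(s)\Vert_X$ reduces to the supremum over $[t_k,t]$, and the same contraction estimate with $a=t_k$ and the same $\delta$ applies. We will check that the concatenated process is continuous at $t_k$ and satisfies \eqref{eq:cutoff-mild-global} on all of $[0,T]$; this follows from the evolution property and the continuity statement in Proposition~\ref{prop:expected-maximal}.

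\textbf{Uniqueness and consistency.} Uniqueness in $L^q(\mathfrak O;C([0,T];X))$ follows block by block from the contraction. The same uniqueness shows that solutions constructed for different $T$ coincide on their common interval, which yields a single global solution.
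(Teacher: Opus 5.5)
Your proposal is correct and follows essentially the same route as the paper. It runs a Banach fixed point in $\mathcal S^q$, driven by the deterministic flux bound \eqref{eq:cutoff-global-bound}, the Lipschitz bound \eqref{eq:cutoff-global-lipschitz} and Proposition~\ref{prop:expected-maximal}, with a data-independent step and restarts at $t_k$ via the evolution property while freezing the history in the cutoff (which is what the paper's remark $|\Vert u\Vert_{X_s}-\Vert v\Vert_{X_s}|\le\sup_{kT_m\le r\le s}\Vert u(r)-v(r)\Vert_X$ encodes). The one cosmetic difference is that the paper contracts once for a fixed $q_0>2$ and reads off all higher moments a posteriori from the mild equation and the bound $4C_pm^2$, whereas you contract separately for each $q$, which tacitly uses that the $\mathcal S^{q'}$ fixed point is also the $\mathcal S^{q}$ fixed point for $q<q'$.
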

\begin{proof}
Fix $q_0>2$ and denote on $\mathcal S^{q_0}_{0,T}$ the mapping $ \Phi_m$ as
\[
 \Phi_m(u)(t)=S_\nu(t,0)n_{\rm in}
 -\nu\int_0^tS_\nu(t,s)\nabla\cdot[
 \theta_m(\Vert{u}\Vert_{X_s})u(s)\nabla(-\Delta)^{-1}u(s)]\,d s.
\]
For \(u\in\mathcal S^{q_0}_{0,T}\), define $F_u(s)
=
\theta_m(\Vert{u}\Vert_{X_s})
u(s)\nabla(-\Delta)^{-1}u(s).$
Since \(u\) is adapted and pathwise continuous, $\Vert{u}\Vert_{X_s}
=
\sup_{0\le r\le s}\Vert {u(r)}\Vert_{X}$
is \(\mathcal F_s\)-measurable. Hence \(F_u(s)\) is adapted. Then, we apply
Proposition~\ref{prop:expected-maximal}  to \(F_u\).

By the definition of \(\Phi_m\), we estimate to get
\begin{align}
\Vert {\Phi_m(u)}\Vert_{\mathcal S^{q_0}_{0,T}}
&\le
\left(
\mathbb E\sup_{0\le t\le T}
\Vert {S_\nu(t,0)n_{\rm in}}\Vert_{X}^{q_0}
\right)^{1/q_0}\notag\\
&\qquad+
\left(
\mathbb E\sup_{0\le t\le T}
\Vert {
\nu\int_0^t
S_\nu(t,s)\nabla\cdot F_u(s)\,d s
}\Vert_{X}^{q_0}
\right)^{1/q_0}.
\label{eq:Phi-map-triangle}
\end{align}
Since \(n_{\rm in}\) is deterministic, the  estimate
\eqref{eq:expected-homogeneous} yields
\begin{align}
\left(
\mathbb E\sup_{0\le t\le T}
\Vert {S_\nu(t,0)n_{\rm in}}\Vert_{X}^{q_0}
\right)^{1/q_0}
&\le
 \Vert{n_{\rm in}}\Vert_{L^{q_0}(\mathfrak O;X)}=\Vert {n_{\rm in}}\Vert_{X}.
\label{eq:Phi-homogeneous-bound}
\end{align}
We apply \eqref{eq:expected-duhamel} with \(a=0\) and \(q=q_0\) to get
\begin{align}
&\left(
\mathbb E\sup_{0\le t\le T}
 \Vert {
\nu\int_0^t
S_\nu(t,s)\nabla\cdot F_u(s)\,d s
}\Vert_{X}^{q_0}
\right)^{1/q_0}\notag\\
&\quad\le
C_{p,q_0}(1+\sqrt T)\sqrt{\nu T}
\left(
\mathbb E\sup_{0\le s\le T}
\Vert {F_u(s)}\Vert_{X}^{q_0}
\right)^{1/q_0}.
\label{eq:Phi-Duhamel-bound}
\end{align}
In light of \eqref{eq:cutoff-global-bound}, one has  pathwise for every \(s\in[0,T]\), $\Vert {F_u(s)}\Vert_{X}
\le
4C_pm^2.$
Consequently,
\begin{align}
\left(
\mathbb E\sup_{0\le s\le T}
\Vert {F_u(s)}\Vert_{X}^{q_0}
\right)^{1/q_0}
&\le
\left(
\mathbb E(4C_pm^2)^{q_0}
\right)^{1/q_0}=
4C_pm^2.
\label{eq:cutoff-flux-S-bound}
\end{align}
Substituting \eqref{eq:Phi-homogeneous-bound},
\eqref{eq:Phi-Duhamel-bound} and
\eqref{eq:cutoff-flux-S-bound} into
\eqref{eq:Phi-map-triangle}, we obtain
\begin{equation}\label{eq:cutoff-map-bound}
\Vert {\Phi_m(u)}\Vert_{\mathcal S^{q_0}_{0,T}}
\le
\Vert {n_{\rm in}}\Vert_{X}
+
C_{p,q_0}m^2(1+\sqrt T)\sqrt{\nu T},
\end{equation}
where $C_{p,q_0}>0$ is a constant.

We next estimate the difference of mapping $\Phi_m$.  By computation, one finds
\begin{align*}
\Phi_m(u)(t)-\Phi_m(v)(t)
&=
-\nu\int_0^t
S_\nu(t,s)\nabla\cdot
\bigl(F_u(s)-F_v(s)\bigr)\,d s.
\end{align*}
    We apply \eqref{eq:expected-duhamel} and obtain from the above inequality that 
\begin{align}
\Vert{
\Phi_m(u)-\Phi_m(v)
}\Vert_{\mathcal S^{q_0}_{0,T}}
&\le
C_{p,q_0}(1+\sqrt T)\sqrt{\nu T}
\left(
\mathbb E\sup_{0\le s\le T}
\Vert {F_u(s)-F_v(s)}\Vert_{X}^{q_0}
\right)^{1/q_0}.
\label{eq:Phi-difference-Duhamel}
\end{align}
By  Lipschitz estimate
\eqref{eq:cutoff-global-lipschitz},  we have for every \(s\in[0,T]\),
\begin{align*}
\Vert{F_u(s)-F_v(s)}\Vert_{X}
&\le
C_pm
\sup_{0\le r\le s}
\Vert {u(r)-v(r)}\Vert_{X}\\
&\le
C_pm
\sup_{0\le r\le T}
\Vert {u(r)-v(r)}\Vert_{X}.
\end{align*}
Taking the supremum over \(s\in[0,T]\), raising to the \(q_0\)-th power,
taking expectations and then taking the \(q_0\)-th root, we further obtain 
\begin{align}
&\left(
\mathbb E\sup_{0\le s\le T}
\Vert {F_u(s)-F_v(s)}\Vert_{X}^{q_0}
\right)^{1/q_0}\notag\\
&\quad\le
C_pm
\left(
\mathbb E\sup_{0\le r\le T}
\Vert{u(r)-v(r)}\Vert_{X}^{q_0}
\right)^{1/q_0}=
C_pm\Vert {u-v}\Vert_{\mathcal S^{q_0}_{0,T}}.
\label{eq:cutoff-flux-difference-S}
\end{align}
Substituting \eqref{eq:cutoff-flux-difference-S} into
\eqref{eq:Phi-difference-Duhamel} and absorbing \(C_p\) into
\(C_{p,q_0}\), we have
\begin{equation}\label{eq:cutoff-contraction-estimate}
\Vert {
\Phi_m(u)-\Phi_m(v)
}\Vert_{\mathcal S^{q_0}_{0,T}}
\le
C_{p,q_0}m(1+\sqrt T)\sqrt{\nu T}
\Vert {u-v}\Vert_{\mathcal S^{q_0}_{0,T}}.
\end{equation}
Choose $T_m\le1$ so that $ 2C_{p,q_0}m\sqrt{\nu T_m}\le\tfrac12,$ then the Banach fixed point theorem yields there is a  unique solution on $[0,T_m]$ to (\ref{eq:cutoff-mild-global}).

Assume that $n^m(kT_m)$ for $k\geq 1$ is known, then we  use the iteration to obtain the global cutoff solution to (\ref{eq:cutoff-mild-global}).
  Define
\begin{align*}
 \Phi_m^{(k)}(u)(t)
 &=S_\nu(t,kT_m)n^m(kT_m)\\
 &\quad-\nu\int_{kT_m}^tS_\nu(t,s)\nabla\cdot[
 \theta_m(\Vert{u}\Vert_{X_s})u(s)\nabla(-\Delta)^{-1}u(s)]\,d s.
\end{align*}
Noting that 
$|\Vert {u}\Vert_{X_s}-\Vert{v}\Vert_{X_s}|
\le\sup_{kT_m\le r\le s}\Vert {u(r)-v(r)}\Vert_{X}$, we similarly show that the mapping $\Phi_m^{(k)}$ is a contraction mapping. Then by using the Banach fixed point theorem again, we obtain the solution on $[kT_m,(k+1)T_m].$ By gluing them together, we obtain there is a global solution to \eqref{eq:cutoff-mild-global}. 

We now show that the global cutoff solution has finite moments of every order \(q\ge2\). For every \(q>2\), by using the mild equation (\ref{eq:cutoff-mild-global}),
(\ref{eq:expected-homogeneous}) and \eqref{eq:expected-duhamel}, we obtain
\begin{align*}
\Vert {n^m}\Vert_{\mathcal S^q_{0,T}}
&\le
\Vert {n_{\rm in}}\Vert_{X}
+
C_{p,q}(1+\sqrt T)\sqrt{\nu T}\\
&\qquad\times
\left(
\mathbb E\sup_{0\le s\le T}
\Vert {
\theta_m(\Vert {n^m}\Vert_{X_s})
n^m(s)\nabla(-\Delta)^{-1}n^m(s)
}\Vert_{X}^q
\right)^{1/q}.
\end{align*}
In addition, thanks to  \eqref{eq:cutoff-global-bound}, we have
\[
\Vert {
\theta_m(\Vert {n^m}\Vert_{X_s})
n^m(s)\nabla(-\Delta)^{-1}n^m(s)
}\Vert_{X}
\le
4C_pm^2
\]
pathwise for every \(s\). Hence
\[
\Vert {n^m}\Vert_{\mathcal S^q_{0,T}}
\le
\Vert {n_{\rm in}}\Vert_{X}
+
C_{p,q}m^2(1+\sqrt T)\sqrt{\nu T},
\qquad q>2.
\]
Thus, $n^m\in L^q(\mathfrak O;C([0,T];X))$
 \text{for every }$q\ge2$.
\end{proof}

\subsection{Pathwise regularity of global cutoff solution}
\begingroup
 
We next discuss the regularity of the global solution given by \eqref{eq:cutoff-mild-global} to the cutoff system.
We summarize the regularity result as 
\begin{lemma}
\label{prop:pathwise-cutoff-regularity}
Suppose that $2<p<\infty$, $0<\nu\le1$ and
$n_{\rm in}\in H^1(\Omega)\cap X\cap L^\infty(\Omega)$ with $\|n_{\rm in}\|_X<m.$  Let $n^m$ be the global cutoff mild solution constructed  in Proposition \ref{prop:global-cutoff}.
Then, for every finite $T$, almost surely,
\begin{align}\label{eq:pathwise-cutoff-regularity}
 \begin{array}{ll}
& n^m\in C\bigl([0,T];H^1(\Omega)\cap L^1(\Omega)\bigr)
       \cap L^2\bigl(0,T;H^2(\Omega)\bigr),\\
 &\sup_{0\le t\le T}\|n^m(t)\|_{L^\infty(\Omega)}<\infty.
 \end{array}
\end{align}
\end{lemma}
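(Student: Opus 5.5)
We work pathwise: fix $\omega$ outside a null set so that $W$ is continuous and $n^m\in C([0,T];X)$ with $R:=\sup_{t\le T}\|n^m(t)\|_X<\infty$. Set $F(s)=\theta_m(\|n^m\|_{X_s})n^m(s)\nabla c^m(s)$ with $c^m=(-\Delta)^{-1}n^m$. By Lemma~\ref{lem:cutoff-global-lipschitz}, $\|F(s)\|_X\le 4C_pm^2$. By \eqref{eq:X-Poisson}, $\|\nabla c^m(s)\|_{L^\infty}\le C_pR$ on the set where $\theta_m\neq0$.

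\textbf{Step 1: $L^\infty$ bound.} We bootstrap through the mild formula \eqref{eq:cutoff-mild-global}. By \eqref{eq:Lr-contraction}, $\|S_\nu(t,0)n_{\rm in}\|_{L^\infty}\le\|n_{\rm in}\|_{L^\infty}$. For the Duhamel term we use \eqref{eq:pathwise-Lp-Linfty-divergence}:
\[
\Big\|\nu\int_0^tS_\nu(t,s)\nabla\cdot F(s)\,ds\Big\|_{L^\infty}
\le C\Lambda_T\nu\int_0^t[\nu(t-s)]^{-\frac12-\frac1p}\|F(s)\|_{L^p}\,ds .
\]
Since $p>2$, the exponent $\frac12+\frac1p<1$ is integrable, and $\|F\|_{L^p}\le 4C_pm^2$. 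Hence $\sup_{t\le T}\|n^m(t)\|_{L^\infty}\le \|n_{\rm in}\|_{L^\infty}+C(p,m,T,\Lambda_T)<\infty$. This step is direct and needs no iteration.

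\textbf{Step 2: $H^1$ regularity.} We show $n^m$ is a weak solution of the transformed pathwise equation and run energy estimates. Use the shear change of variables $h(t,x,y)=n^m(t,x+yW_t,y)$, the same as in the proof of Proposition~\ref{prop:pathwise-kernel} with $s=0$. Then $h$ is a mild solution of
\[
\partial_th=\nu\nabla\cdot(A_t\nabla h)-\nu\nabla\cdot\tilde F ,
\]
where $A_t$ is the matrix in \eqref{eq:shear-matrix-definitions}, uniformly elliptic on $[0,T]$ with bounds depending on $\Lambda_T$. Here $\tilde F$ is the transformed flux, in $L^\infty(0,T;L^2)$, because $\|n^m\|_{L^2}\le\|n^m\|_{L^1}^{1/2}\|n^m\|_{L^\infty}^{1/2}$ and $\nabla c^m\in L^\infty$.

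Standard parabolic theory for a divergence-form equation with bounded, continuous-in-time coefficients and $L^2$ divergence data gives $h\in C([0,T];L^2)\cap L^2(0,T;H^1)$. The weak and mild solutions coincide by uniqueness of the linear problem. To lift to $H^1$, we expand the flux as
\[
\nabla\cdot(n\nabla c)=\nabla n\cdot\nabla c-n^2 ,
\]
using $-\Delta c=n$. With $n\in L^\infty$ and $\nabla c\in L^\infty$, the right side is in $L^2(0,T;L^2)$ once $\nabla n\in L^2_tL^2$, which holds.

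Now $A_t$ is only continuous (Hölder) in $t$ but constant in space. So we can test with $-\nabla\cdot(A_t\nabla h)$, or equivalently with $-\Delta h$ after Fourier transform in space, where the equation is explicit. This gives maximal $L^2$ regularity:
\[
h\in C([0,T];H^1)\cap L^2(0,T;H^2),\qquad \text{using } n_{\rm in}\in H^1 .
\]
Undoing the shear, which is a measure-preserving linear map with $\|W\|_{L^\infty(0,T)}<\infty$, preserves $H^1$ and $H^2$ up to constants depending on $\sup_{t\le T}|W_t|$. Continuity in time of the composition follows from continuity of $W$, approximating by smooth functions as in the strong-continuity argument of Proposition~\ref{prop:pathwise-kernel}. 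Continuity in $L^1$ is already known from $n^m\in C([0,T];X)$.

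\textbf{Main obstacle.} The hardest step is rigorously identifying the mild solution with an energy (weak) solution of the transformed equation, and justifying the $H^2$ estimate. The plan is to do this on the Fourier side, with multiplier
\[
\exp\Big(-\nu\int_s^t \xi^TA_a\xi\,da\Big),
\]
which yields smoothing directly. Alternatively, one can mollify $n_{\rm in}$ and $F$, derive the estimates for smooth data, and pass to the limit using the $L^\infty$ bound from Step 1 and the linear estimates of Proposition~\ref{prop:pathwise-kernel}. A remaining subtlety is that $\theta_m(\|n^m\|_{X_s})$ is only a bounded measurable scalar in time. This is harmless, since it multiplies the flux uniformly in space.
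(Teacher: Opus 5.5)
Your proposal is correct and follows the paper's proof. Both arguments get the $L^\infty$ bound from \eqref{eq:pathwise-Lp-Linfty-divergence} using the integrable exponent $\frac12+\frac1p<1$, then apply the shear change of variables to reach a uniformly elliptic divergence-form equation, do an $L^2$ energy estimate and an $H^2$ upgrade, and finally undo the shear. The only variation is in the $H^2$ step. The paper uses difference quotients on the flux form, controlling $\|\nabla^2\widetilde c\|_{L^2}$, and uses the expansion $\nabla\cdot F=\theta_m(A_t\nabla g\cdot\nabla\widetilde c-g^2)$ only afterwards to obtain $\partial_t g\in L^2(0,T;L^2)$. You use that same expansion up front to get an $L^2_tL^2$ source and then test against $-\Delta h$ on the Fourier side, which is equivalent.
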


\begin{proof}
Fix a continuous path on which the global cutoff mild solution constructed in Lemma \ref{prop:global-cutoff}  is
continuous in $X$. Define
 $M_T:=\sup_{0\le t\le T}|W_t|$ and $ \Lambda_T:=2(1+M_T^2).$  We divide our proof into the following several steps.

\medskip\noindent

\textbf{Step 1: $L^\infty$- bound.}
By the definition of \(\theta_m\) and the \(X\)-contraction of
\(S_\nu(t,s)\) given in Lemma \ref{prop:pathwise-kernel}, we obtain that  $\sup_{t\ge0}\Vert {n^m(t)}\Vert_{X}\le2m.$
Moreover, by using interpolation inequalities and \eqref{eq:X-Poisson}, we further have 
\begin{equation*}
\Vert {n^m(t)}\Vert_{L^2}
\le 2m,
\qquad
\Vert {\nabla c^m(t)}\Vert_{L^\infty}
\le 2C_pm.
\end{equation*}
It then follows that
\begin{equation}\label{eq:pathwise-cutoff-flux-bound}
\left\|
\theta_m\bigl(\Vert {n^m}\Vert_{X_t}\bigr)
n^m(t)\nabla c^m(t)
\right\|_{L^p}
\le 4C_pm^2.
\end{equation}
Set $\gamma:=\frac12+\frac1p<1,$ then we apply  estimate
\eqref{eq:pathwise-Lp-Linfty-divergence} to (\ref{eq:cutoff-mild-global}), and then using
\eqref{eq:pathwise-cutoff-flux-bound} to get
\begin{align}
\Vert {n^m(t)}\Vert_{L^\infty(\Omega)}
&\le
\Vert {n_{\rm in}}\Vert_{L^\infty(\Omega)}
+
\nu\int_0^t
\left\|
S_\nu(t,s)\nabla\cdot
\left[
\theta_m\bigl(\Vert {n^m}\Vert_{X_s}\bigr)
n^m(s)\nabla c^m(s)
\right]
\right\|_{L^\infty(\Omega)}
\,d s
\notag\\
&\le
\Vert{n_{\rm in}}\Vert_{L^\infty}
+
C_{p,m,\Lambda_T}
\nu\int_0^t[\nu(t-s)]^{-\gamma}\,d s
\notag\\
&=
\Vert {n_{\rm in}}\Vert_{L^\infty(\Omega)}
+
\frac{C_{p,m,\Lambda_T}}{1-\gamma}
(\nu t)^{1-\gamma}.
\label{eq:pathwise-cutoff-Linfty-bound}
\end{align}
Therefore, $\sup_{0\le t\le T}\Vert {n^m(t)}\Vert_{L^\infty(\Omega)}<\infty.$

\medskip\noindent
\textbf{Step 2: $L^2_tH_x^2$ estimate after transformation.}  Define  $g(t,x,y):=n^m(t,x+yW_t,y)$, $\widetilde c(t,x,y):=c^m(t,x+yW_t,y)$
and
\[
A_t:=
\begin{pmatrix}
1+W_t^2&-W_t\\
-W_t&1
\end{pmatrix},
\]
then by using (\ref{eq:cutoff-mild-global}), we have for the fixed path, $g$ satisfies
\begin{equation}\label{eq:pathwise-sheared-cutoff}
\partial_tg
=
\nu\nabla\cdot
\left[
A_t\left(
\nabla g-
\theta_m\bigl(\Vert {n^m}\Vert_{X_t}\bigr)
g\nabla\widetilde c
\right)
\right],
\qquad
-\nabla\cdot(A_t\nabla\widetilde c)=g,
\end{equation}
which holds in the mild sense.  On \([0,T]\),  as shown in Proposition \ref{prop:pathwise-kernel}, we have  $\Lambda_T^{-1}I\le A_t\le\Lambda_TI,$
where $\Lambda_T$ is given in \eqref{eq:Gamma-definition}. Hence, similarly as shown in Step 1, we find
\begin{align}\label{gestimate}
\Vert {g(t)}\Vert_{L^2(\Omega)}\le2m,
\qquad\Vert {g(t)}\Vert_{L^\infty(\Omega)}
=
\Vert{n^m(t)}\Vert_{L^\infty(\Omega)}
\le C_{p,m,\nu,T,\Lambda_T}.
\end{align}
Invoking Poisson estimate \eqref{eq:X-Poisson}, we  further  obtain 
\begin{equation}\label{eq:sheared-Poisson-gradient}
\Vert {\nabla\widetilde c(t)}\Vert_{L^\infty(\Omega)}
\le C_{p,m}\Lambda_T^{1/2}.
\end{equation}
Moreover, we use the   \(L^2\) Fourier transform to   $-\nabla\cdot(A_t\nabla\widetilde c)=g$ and obtain
\begin{equation}\label{eq:sheared-Poisson-Hessian}
\Vert {\nabla^2\widetilde c(t)}\Vert_{L^2(\Omega)}
\le
\Lambda_T\Vert {g(t)}\Vert_{L^2(\Omega)}
\le2m\Lambda_T.
\end{equation}
Set $F:=
\theta_m\bigl(\Vert {n^m}\Vert_{X_t}\bigr)
A_tg\nabla\widetilde c.$
We collect  (\ref{gestimate}), (\ref{eq:sheared-Poisson-gradient}) and (\ref{eq:sheared-Poisson-Hessian}) to get  $F\in L^2(0,T;L^2(\Omega;\mathbb R^2)).$  We next compute \(L^2\)-energy of $\partial_tg=\nu\nabla\cdot(A_t\nabla g)-\nu\nabla\cdot F$  and obtain
\begin{equation}\label{eq:pathwise-basic-energy}
\Vert {g(t)}\Vert_{L^2(\Omega)}^2
+
\frac{\nu}{\Lambda_T}
\int_0^t\Vert {\nabla g(s)}\Vert_{L^2(\Omega)}^2\,d s
\le
\Vert {n_{\rm in}}\Vert_{L^2(\Omega)}^2
+
\nu\Lambda_T
\int_0^t\Vert {F(s)}\Vert_{L^2(\Omega)}^2\,d s,
\end{equation}
which implies $g\in L^2(0,T;H^1)$.

It remains to establish the $L_t^2H_x^2$ estimate.  For \(j=1,2\), define
\[
D_h^jf(z):=\frac{f(z+he_j)-f(z)}{h},
\]
where $e_j$ is the unit vector at the $j$-th direction. We apply $D_h^j$S to $g$-equation and obtain
\[
\partial_tD_h^jg
=
\nu\nabla\cdot(A_t\nabla D_h^jg)
-\nu\nabla\cdot D_h^jF.
\]
The product rule yields
\[
D_h^jF
=
\theta_m\bigl(\Vert {n^m}\Vert_{X_t}\bigr)A_t
\left[
(D_h^jg)\nabla\widetilde c(\,\cdot+he_j)
+
gD_h^j\nabla\widetilde c
\right].
\]
Furthermore, by using Lemma \ref{lem:X-elliptic}, we have
\[
\Vert {D_h^j\nabla\widetilde c}\Vert_{L^2(\Omega)}
\le
\Vert {\partial_j\nabla\widetilde c}\Vert_{L^2(\Omega)}
\le
\Vert {\nabla^2\widetilde c}\Vert_{L^2(\Omega)}
\le2m\Lambda_T.
\]
Consequently,
\begin{equation}\label{eq:difference-flux-bound}
\Vert {D_h^jF(t)}\Vert_{L^2(\Omega)}
\le
C_{p,m,\nu,T,\Lambda_T}
\left(
1+\Vert {D_h^jg(t)}\Vert_{L^2(\Omega)}
\right),
\end{equation}
uniformly in \(h\). Moreover, we apply \eqref{eq:pathwise-basic-energy} to \(D_h^jg\), then use
\eqref{eq:difference-flux-bound} and $\Vert{D_h^jn_{\rm in}}\Vert_{L^2(\Omega)}
\le\Vert {\partial_jn_{\rm in}}\Vert_{L^2(\Omega)}$
to obtain
\begin{align}
\Vert {D_h^jg(t)}^2\Vert_{L^2(\Omega)}
&+
\frac{\nu}{\Lambda_T}
\int_0^t
\Vert {\nabla D_h^jg(s)}\Vert_{L^2(\Omega)}^2\,d s
\notag\\
&\le
\Vert {\partial_jn_{\rm in}}\Vert_{L^2(\Omega)}^2
+
C_{p,m,\nu,T,\Lambda_T}
\int_0^t
\left(
1+\Vert {D_h^jg(s)}\Vert_{L^2(\Omega)}^2
\right)\,d s.
\label{eq:pathwise-difference-energy}
\end{align}
We further use Gr\"onwall's inequality to get 
\[
\sup_{0\le t\le T}\Vert {D_h^jg(t)}\Vert_{L^2(\Omega)}^2
\quad\text{and}\quad
\int_0^T\Vert {\nabla D_h^jg(t)}\Vert_{L^2(\Omega)}^2\,d t,
\]
uniformly in \(h\).
The difference-quotient approximation then gives $g\in
L^\infty(0,T;H^1)
\cap
L^2(0,T;H^2).$

\medskip\noindent
\textbf{Step 3: $L^2_tH_x^2$ estimate.}
Recall that  $\nabla\cdot F
=
\theta_m\bigl(\Vert {n^m}\Vert_{X_t}\bigr)
\left(
A_t\nabla g\cdot\nabla\widetilde c-g^2
\right).$
Steps 1 and 2 imply that  $\nabla\cdot F\in L^2(0,T;L^2).$ Combining this with \(g\in L^2(0,T;H^2)\) and
\eqref{eq:pathwise-sheared-cutoff} therefore yields $\partial_tg\in L^2(0,T;L^2).$
Hence $g\in
H^1(0,T;L^2)\cap L^2(0,T;H^2)
\subset C([0,T];H^1)$ by the interpolation inequality.  Finally, since  $n^m(t,x,y)=g(t,x-yW_t,y)$  and \(W\) is continuous, we have $n^m\in
C([0,T];H^1)
\cap
L^2(0,T;H^2).$



\end{proof}

\subsection{Local mild solution and pathwise uniqueness}

In  this subsection, we first establish the local mild solution as defined in \ref{def:local-mild}.  Choose an integer $m_0>\Vert {n_{\rm in}}\Vert_{X}$ and let
\begin{equation}\label{eq:exit-time}
 \tau_m=\inf\{t>0:\Vert {n^m}\Vert_{X_t}\ge m\},\qquad m\ge m_0,
\end{equation}
then we have the following result.
\begin{lemma} \label{prop:cutoff-to-local}
 Assume $n^m$ is a mild solution constructed in Lemma \ref{prop:global-cutoff}. Then, $\tau_m$ given in  (\ref{eq:exit-time}) is a stopping time, $\mathbb P(\tau_m>0)=1$ and
$(n^m,\tau_m)$ is a local mild solution of (\ref{eq:untruncated-mild-local}).
\end{lemma}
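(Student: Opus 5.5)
The plan has three parts: show $\tau_m$ is a stopping time, show $\tau_m>0$ almost surely, and show that on $[0,\tau_m)$ the cutoff is inactive, so that $n^m$ solves the untruncated equation. For each part we build an explicit localizing sequence.

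\textbf{Stopping time.} By Lemma \ref{prop:global-cutoff}, $n^m$ is adapted with pathwise continuous trajectories in $X$. Hence $t\mapsto\Vert{n^m}\Vert_{X_t}=\sup_{0\le s\le t}\Vert{n^m(s)}\Vert_X$ is continuous, nondecreasing and adapted. Then $\tau_m$ is the hitting time of the closed set $[m,\infty)$ by a continuous adapted real process. Since the filtration is complete, the standard argument applies: for each $t$,
\[
\{\tau_m\le t\}=\bigcap_{k}\bigcup_{s\in\mathbb Q\cap[0,t]}\{\Vert{n^m}\Vert_{X_s}>m-1/k\},
\]
which lies in $\mathcal F_t$. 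So $\tau_m$ is a stopping time.

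\textbf{Positivity.} The initial datum satisfies $\Vert{n^m(0)}\Vert_X=\Vert{n_{\rm in}}\Vert_X<m_0\le m$. Path continuity therefore gives $\Vert{n^m}\Vert_{X_t}<m$ for all small $t>0$ on every path, so $\tau_m>0$ surely, and in particular $\mathbb P(\tau_m>0)=1$.

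\textbf{Local mild solution.} For $s<\tau_m$ we have $\Vert{n^m}\Vert_{X_s}<m$, hence $\theta_m(\Vert{n^m}\Vert_{X_s})=1$ because $\theta=1$ on $[0,1]$. Substituting into \eqref{eq:cutoff-mild-global} shows that \eqref{eq:untruncated-mild-local} holds for $t<\tau_m$. To obtain stopping times $\rho_j<\tau_m$, take
\[
\rho_j:=\inf\{t>0:\Vert{n^m}\Vert_{X_t}\ge m-2^{-j}(m-\Vert{n_{\rm in}}\Vert_X)\}\wedge j .
\]
These are stopping times by the same argument, satisfy $\rho_j<\tau_m$ when $\tau_m<\infty$ (by continuity, the lower level is reached strictly before level $m$), and increase to $\tau_m$ (again by continuity and monotonicity of the running supremum). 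On $[0,\rho_j]$ the cutoff equals $1$, so the mild identity holds there. The moment bound is immediate from Lemma \ref{prop:global-cutoff}:
\[
\mathbb E\sup_{t\le T}\Vert{n^m(t\wedge\rho_j)}\Vert_X^q\le\mathbb E\sup_{t\le T}\Vert{n^m(t)}\Vert_X^q<\infty .
\]
Alternatively, the pathwise bound $\Vert{n^m(t\wedge\rho_j)}\Vert_X\le m$ gives it directly.

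\textbf{Main obstacle.} The only delicate point is strictness, $\rho_j<\tau_m$ on $\{\tau_m<\infty\}$. This requires that the running supremum cross the intermediate levels strictly before reaching $m$, which is guaranteed by continuity of $t\mapsto\Vert{n^m}\Vert_{X_t}$ together with $\Vert{n_{\rm in}}\Vert_X<m$. Measurability of $\Vert{n^m}\Vert_{X_s}$ reduces to a countable supremum over rational times, again by path continuity.
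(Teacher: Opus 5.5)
Your proposal is correct and follows the paper's proof essentially step for step: the hitting-time characterization $\{\tau_m\le t\}=\{\|n^m\|_{X_t}\ge m\}$, positivity from $\|n_{\rm in}\|_X<m$ and path continuity, the cutoff being identically $1$ before $\tau_m$, and an announcing sequence built from hitting times of levels below $m$ truncated at $j$. Your deviations are cosmetic refinements: the levels $m-2^{-j}(m-\|n_{\rm in}\|_X)$ replace $m-j^{-1}$, you give an explicit rational-time measurability argument, and you observe that the pathwise bound $\|n^m(t\wedge\rho_j)\|_X\le m$ gives the moment condition directly.
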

\begin{proof}
The norm $\Vert\cdot \Vert_{X_t}$ is adapted, continuous, and non-decreasing, so
$\{\tau_m\le t\}=\{\Vert {n^m}\Vert_{X_t}\ge m\}\in\mathcal F_t$. Since its initial value is
strictly below $m$, continuity yields $\tau_m>0$. By the definition of \(\tau_m\), the cutoff is identically equal to one
on \([0,\tau_m)\). Choose \(j\in\mathbb N\) sufficiently large that $j^{-1}<m- \Vert {n_{\rm in}\Vert_{X}},$
and define
\[
 \rho_{m,j}=\inf\{t>0:\Vert {n^m}\Vert_{X_t}\ge m-j^{-1}\}\wedge j,
\]
then $\rho_{m,j}\uparrow\tau_m$, with strict inequality on
$\{\tau_m<\infty\}$ and
\[
 \mathbb E\sup_{t\le T}\Vert {n^m(t\wedge\rho_{m,j})}\Vert_{X}^q
 \le\mathbb E\sup_{t\le T}\Vert {n^m(t)}\Vert_{X}^q<\infty.
\]
\end{proof}
Before extending the local mild solution to maximal local mild solution, we establish the following uniqueness result.
 \begin{lemma}
\label{thm:pathwise-uniqueness}
Let \((n^1,\tau_1)\) and \((n^2,\tau_2)\) be two local mild solutions to (\ref{eq:KS-local})
defined on the same stochastic basis $(\mathfrak O,\mathcal F,(\mathcal F_t)_{t\ge0},\mathbb P)$
and satisfying $n^1(0)=n^2(0)=n_{\rm in}$ $\mathbb P\text{-almost surely}.$  Set $\tau:=\tau_1\wedge\tau_2.$
Then
\begin{equation}\label{eq:pathwise-uniqueness-statement}
\mathbb P\left(
n^1(t)=n^2(t)\ \text{in }X
\text{ for every }0\le t<\tau
\right)=1.
\end{equation}
Equivalently, there exists a set
\(\mathfrak O_0\in\mathcal F\) with
\(\mathbb P(\mathfrak O_0)=1\) such that for every
\(\omega\in\mathfrak O_0\),
\[
n^1(t,\omega)=n^2(t,\omega)
\qquad
\text{for every }
0\le t<\tau_1(\omega)\wedge\tau_2(\omega).
\]
\end{lemma}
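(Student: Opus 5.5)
I will localize both solutions with a common family of stopping times and run a Gronwall-type argument on the difference in the norm $\Vert\cdot\Vert_{\mathcal S^q}$, using the Duhamel estimate of Proposition~\ref{prop:expected-maximal} together with the Lipschitz bound \eqref{eq:X-difference}.

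Let $\rho^1_j\uparrow\tau_1$ and $\rho^2_j\uparrow\tau_2$ be the localizing sequences from Definition~\ref{def:local-mild}. For $R\in\mathbb N$ set
\[
\sigma_{R,j}:=\rho^1_j\wedge\rho^2_j\wedge\inf\{t\ge0:\Vert n^1\Vert_{X_t}\vee\Vert n^2\Vert_{X_t}\ge R\}.
\]
Each $\sigma_{R,j}$ is a stopping time. By path continuity in $X$, $\sigma_{R,j}\uparrow\tau$ as $R,j\to\infty$ on every path. It therefore suffices to show $n^1=n^2$ on $[0,\sigma_{R,j}]$ almost surely for each fixed $R,j$, and then take a countable union of null sets.

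Fix $R$, $j$, and $T<\infty$. Write $\sigma=\sigma_{R,j}$ and $w(t)=n^1(t\wedge\sigma)-n^2(t\wedge\sigma)$. Subtracting the two mild formulations \eqref{eq:untruncated-mild-local}, the $S_\nu(t,0)n_{\rm in}$ terms cancel. The difference on $[0,t\wedge\sigma]$ becomes
\[
-\nu\int_0^{t\wedge\sigma}S_\nu(t\wedge\sigma,s)\nabla\cdot G(s)\,ds,
\qquad
G(s)=n^1\nabla(-\Delta)^{-1}n^1-n^2\nabla(-\Delta)^{-1}n^2 .
\]
The main technical point is that Proposition~\ref{prop:expected-maximal} uses deterministic upper limits. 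To handle this, I replace the integrand by $\widetilde G(s)=\mathbf 1_{\{s<\sigma\}}G(s)$, which is adapted because $\sigma$ is a stopping time. Then
\[
\sup_{t\le T}\Vert w(t)\Vert_X\le\sup_{t\le T}\Bigl\Vert\nu\int_0^tS_\nu(t,s)\nabla\cdot\widetilde G(s)\,ds\Bigr\Vert_X .
\]
This holds because at a time $t\le\sigma$ the two integrals coincide, while for $t>\sigma$ we have $w(t)=w(\sigma)$.

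Strictly, the proof of \eqref{eq:expected-duhamel} only used the pathwise bound \eqref{eq:pathwise-duhamel-integral} together with the independence factorization \eqref{eq:conditional-factorization} for adapted integrands. So it applies to $\widetilde G$, even though $\widetilde G$ is not continuous; I will remark on this rather than reprove it. By \eqref{eq:X-difference}, $\Vert\widetilde G(s)\Vert_X\le 2C_pR\,\sup_{r\le s}\Vert w(r)\Vert_X$. Working on a short interval $[a,a+h]$ and applying \eqref{eq:expected-duhamel} then gives
\[
\Vert w\Vert_{\mathcal S^q_{a,a+h}}\le \Vert w(a)\Vert_{L^q(\mathfrak O;X)}+C_{p,q}R(1+\sqrt h)\sqrt{\nu h}\,\Vert w\Vert_{\mathcal S^q_{a,a+h}} .
\]
Here the $S_\nu(t,a)w(a)$ term is handled through the cocycle property and \eqref{eq:expected-homogeneous}, exactly as in the iteration of Lemma~\ref{prop:global-cutoff}. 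The quantity $\Vert w\Vert_{\mathcal S^q}$ is finite because of the moment bound in Definition~\ref{def:local-mild} and the cap at level $R$.

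Now choose $h$ so that $C_{p,q}R(1+\sqrt h)\sqrt{\nu h}\le\frac12$. Starting from $w(0)=0$, the displayed inequality gives $w\equiv0$ on $[0,h]$, and iterating yields $w\equiv0$ on $[0,T]$ almost surely. Path continuity then upgrades this to ``for all $t$ simultaneously,'' and letting $T,R,j\to\infty$ along countable sequences gives \eqref{eq:pathwise-uniqueness-statement}.

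The step I expect to be the main obstacle is the stopping-time localization inside the Duhamel integral. This requires checking carefully that the stopped mild identity holds and that the indicator-truncated integrand is admissible in Proposition~\ref{prop:expected-maximal}. If that proves awkward, a fallback is a purely pathwise argument. On a fixed path with $\sigma$ fixed, \eqref{eq:pathwise-divergence-bound} gives
\[
\Vert w(t)\Vert_X\le C\sqrt\nu\,\Lambda_T\, 2C_pR\int_0^t(t-s)^{-1/2}\sup_{r\le s}\Vert w(r)\Vert_X\,ds ,
\]
using $\Gamma_{t,s}^2\le2\Lambda_T$. A singular Gronwall inequality then gives $w\equiv0$ with no expectations needed, which sidesteps the measurability issue entirely.
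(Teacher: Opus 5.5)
Your proposal is correct and follows essentially the same route as the paper. Both arguments localize at level $R$ by a stopping time, truncate the flux by $\mathbf 1_{\{s<\sigma\}}$ so it stays adapted, apply Proposition~\ref{prop:expected-maximal} on deterministic intervals of length $h_R$ to get a contraction, iterate over consecutive intervals, and finally let $R\to\infty$. The differences are minor: the paper restarts each block through the $\mathcal F_a$-measurable event $E_k=\{a<\tau^R\}$ instead of your stopped process $w(t)=u(t\wedge\sigma)$, it does not intersect with the localizing sequences $\rho_j$, and your pathwise singular-Gronwall fallback would also work but is not needed.
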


\begin{proof}
Fix $q>2$. For $i=1,2$ and
$R>\Vert {n_{\rm in}}\Vert_{X}$, define 
\begin{equation}\label{eq:uniqueness-exit-times}
 \tau_i^R
 =
 \inf\left\{
 t\in[0,\tau_i):
 \Vert {n^i}\Vert_{X_t}\ge R
 \right\}
 \wedge\tau_i,
 \qquad
 \tau^R=\tau_1^R\wedge\tau_2^R,
\end{equation}
where  $ \Vert {n^i}\Vert_{X_t}
 :=
 \sup_{0\le s\le t}\Vert {n^i(s)}\Vert_{X}.$
The continuity and adaptedness of the local mild solutions shown in Definition \ref{def:local-mild} imply that
$\tau_i^R$ and $\tau^R$ are stopping times.

Moreover, we claim
\begin{equation}\label{eq:uniqueness-exit-limit}
 \tau^R\uparrow\tau
 \qquad
 \mathbb P\text{-almost surely as }R\to\infty.
\end{equation}
Indeed, $\tau^R\le\tau$ for every $R$.   Since $n^1,n^2\in C([0,t];X),$
we have
\[
 \sup_{0\le s\le t}\Vert {n^1(s)}\Vert_{X}
 +
 \sup_{0\le s\le t}\Vert {n^2(s)}\Vert_{X}
 <\infty.
\]
Consequently, for every sufficiently large $R$, $\Vert {n^1}\Vert_{X_t}<R$ and  $\Vert {n^2}\Vert_{X_t}<R.$  Hence, $t<\tau^R$. Since this holds for every $t<\tau$,
\eqref{eq:uniqueness-exit-limit} follows.

Next, we discuss the uniqueness.  Set $u=n^1-n^2,$
 $c^i=(-\Delta)^{-1}n^i,$ for  $i=1,2$, then we  define
\begin{equation}\label{eq:uniqueness-flux}
 G
 =
 n^1\nabla c^1-n^2\nabla c^2.
\end{equation}
Using $G
 =
 (n^1-n^2)\nabla c^1
 +
 n^2\nabla(c^1-c^2),$
we obtain, for $r=1,p$,
\begin{align*}
\Vert {G}\Vert_{L^r(\Omega)}
 &\le
\Vert {n^1-n^2}\Vert_{L^r(\Omega)}
\Vert {\nabla c^1}\Vert_{L^\infty}
 +
\Vert {n^2}\Vert_{L^r(\Omega)}
 \Vert {\nabla(c^1-c^2)} \Vert_{L^\infty(\Omega)}                                     \\
 &\le
 C_p\Vert {n^1-n^2}\Vert_{L^r(\Omega)}\Vert {n^1}\Vert_{X}
 +
 C_p\Vert {n^2}\Vert_{L^r(\Omega)}\Vert {n^1-n^2}\Vert_{X}.
\end{align*}
We adding the estimates for $r=1$ and $r=p$ to get
\begin{equation}\label{eq:uniqueness-flux-difference}
\Vert {G}\Vert_{X}
 \le
 C_p\bigl(
\Vert {n^1}\Vert_{X}
 +
 \Vert {n^2}\Vert_{X}
 \bigr)
\Vert {u}\Vert_{X}.
\end{equation}
For $0\le s<\tau^R$, the definition of $\tau^R$ gives $\Vert {n^1(s)}\Vert_{X}<R$ and $
\Vert {n^2(s)}\Vert_{X}<R.$  Therefore, by using \eqref{eq:uniqueness-flux-difference}, one finds
\begin{equation}\label{eq:localized-flux-difference}
 \Vert {G(s)}\Vert_{X}
 \le
 2C_pR\Vert {u(s)}\Vert_{X},
 \qquad
 0\le s<\tau^R.
\end{equation}

Define
\begin{equation}\label{eq:stopped-difference-flux}
 G^R(s)
 =
 \begin{cases}
  G(s),&0\le s<\tau^R,\\
  0,&s\ge\tau^R.
 \end{cases}
\end{equation}
Equivalently,
 $G^R(s)=\mathbf 1_{\{s<\tau^R\}}G(s).$
Since $\tau^R$ is a stopping time and $G$ is adapted, we have $G^R$ is adapted.

For every $0\le t<\tau^R$, we  subtract the two mild equations to get
\begin{align}
 u(t)
 &=
 -\nu\int_0^t
 S_\nu(t,s)\nabla\cdot
 \left(
 n^1(s)\nabla c^1(s)
 -
 n^2(s)\nabla c^2(s)
 \right)\,d s                                      \notag\\
 &=
 -\nu\int_0^t
 S_\nu(t,s)\nabla\cdot G^R(s)\,d s.
 \label{eq:stopped-difference-mild}
\end{align}
Let $h\in(0,1]$, then by using Proposition \ref{prop:expected-maximal}, we have  from
\eqref{eq:stopped-difference-mild}
and \eqref{eq:localized-flux-difference} that
\begin{align}
 &\left(
 \mathbb E\sup_{0\le t<h\wedge\tau^R}
\Vert {u(t)}\Vert_X^q
 \right)^{1/q}                                                   \notag\\
 &\quad\le
 \left(
 \mathbb E\sup_{0\le t\le h}
\Vert {
 \nu\int_0^t
 S_\nu(t,s)\nabla\cdot G^R(s)\,d s
 }\Vert_X^q
 \right)^{1/q}                                                   \notag\\
 &\quad\le
 C_{p,q}(1+\sqrt h)\sqrt{\nu h}
 \left(
 \mathbb E\sup_{0\le s\le h}
\Vert {G^R(s)}\Vert_X^q
 \right)^{1/q}                                                   \notag\\
 &\quad\le
 2C_pC_{p,q}R(1+\sqrt h)\sqrt{\nu h}
 \left(
 \mathbb E\sup_{0\le s<h\wedge\tau^R}
\Vert {u(s)}\Vert_X^q
 \right)^{1/q}.
 \label{eq:uniqueness-short-interval-estimate}
\end{align}
Since for $s<\tau^R$, $\Vert {u(s)}\Vert_X
 \le
 \Vert {n^1(s)}\Vert_X
 +
\Vert {n^2(s)}\Vert_X
 <2R.$
Choose a deterministic number $h_R\in(0,1]$ sufficiently small that
\begin{equation}\label{eq:uniqueness-time-choice}
 2C_pC_{p,q}R
 (1+\sqrt{h_R})\sqrt{\nu h_R}
 \le\frac12.
\end{equation}
Taking $h=h_R$ in
\eqref{eq:uniqueness-short-interval-estimate} gives
\begin{align*}
 \left(
 \mathbb E\sup_{0\le t<h_R\wedge\tau^R}
\Vert {u(t)}\Vert_X^q
 \right)^{1/q}
 &\le
 \frac12
 \left(
 \mathbb E\sup_{0\le t<h_R\wedge\tau^R}
 \Vert {u(t)}\Vert_X^q
 \right)^{1/q}.
\end{align*}
Since this quantity is finite and nonnegative, it must vanish. Hence $\mathbb E\sup_{0\le t<h_R\wedge\tau^R}
 \Vert {u(t)}\Vert_X^q
 =0.$
 It follows that
\[
 u(t)=0\quad\text{in }X
 \quad\text{for every }0\le t<h_R\wedge\tau^R,
 \qquad \mathbb P\text{-almost surely}.
\]

We now repeat the argument on consecutive deterministic intervals.
Suppose that, for some integer $k\ge1$,
\begin{equation}\label{eq:uniqueness-induction-hypothesis}
 u(t)=0\quad\text{in }X
 \quad\text{for every }0\le t<kh_R\wedge\tau^R,
 \qquad \mathbb P\text{-almost surely}.
\end{equation}
Set $a=kh_R$, $b=(k+1)h_R$, $E_k=\{a<\tau^R\}.$  Since $\tau^R$ is a stopping time, $E_k\in\mathcal F_a.$
On $E_k$, continuity of $u$ and
\eqref{eq:uniqueness-induction-hypothesis} imply $u(a)=0.$  Moreover, we have on $E_k$, for $a\le t<b\wedge\tau^R$,
\begin{align}
 u(t)
 &=
 S_\nu(t,a)u(a)
 -
 \nu\int_a^t
 S_\nu(t,s)\nabla\cdot G(s)\,d s                         \notag\\
 &=
 -\nu\int_a^t
 S_\nu(t,s)\nabla\cdot G(s)\,d s.
 \label{eq:uniqueness-restart-before-stopping}
\end{align}
Define $G_k^R(s)
 =
 \mathbf 1_{E_k}
 \mathbf 1_{\{s<\tau^R\}}
 G(s)$, $a\le s\le b.$
Since  $E_k\in\mathcal F_a\subseteq\mathcal F_s$
and $\{s<\tau^R\}\in\mathcal F_s,$
the random variable $G_k^R(s)$ is $\mathcal F_s$-measurable. Therefore,
$G_k^R$ is  adapted.  Multiplying \eqref{eq:uniqueness-restart-before-stopping} by
$\mathbf 1_{E_k}$ gives
\begin{equation}\label{eq:uniqueness-restarted-mild}
 \mathbf 1_{E_k}u(t)
 =
 -\nu\int_a^t
 S_\nu(t,s)\nabla\cdot G_k^R(s)\,d s,
 \qquad
 a\le t<b\wedge\tau^R.
\end{equation}
We  further invoke Proposition \ref{prop:expected-maximal} to obtain
\begin{align}
 &\left(
 \mathbb E\left[
 \mathbf 1_{E_k}
 \sup_{a\le t<b\wedge\tau^R}
\Vert {u(t)}\Vert_X^q
 \right]
 \right)^{1/q}                                                   \notag\\
 &\quad\le
 C_{p,q}(1+\sqrt{b-a})\sqrt{\nu(b-a)}
 \left(
 \mathbb E\sup_{a\le s\le b}
 \Vert {G_k^R(s)}\Vert_X^q
 \right)^{1/q}.                                     \notag
\end{align}
For $a\le s<b\wedge\tau^R$, estimate
\eqref{eq:localized-flux-difference} yields $\Vert {G_k^R(s)}\Vert_X
 \le
 2C_pR\mathbf 1_{E_k}\Vert {u(s)}\Vert_X.$
Consequently,
\begin{align}
 &\left(
 \mathbb E\left[
 \mathbf 1_{E_k}
 \sup_{a\le t<b\wedge\tau^R}
 \Vert {u(t)}\Vert_X^q
 \right]
 \right)^{1/q}                                                   \notag\\
 &\quad\le
 2C_pC_{p,q}R
 (1+\sqrt{b-a})\sqrt{\nu(b-a)}
 \left(
 \mathbb E\left[
 \mathbf 1_{E_k}
 \sup_{a\le s<b\wedge\tau^R}
\Vert {u(s)}\Vert_X^q
 \right]
 \right)^{1/q}                                                   \notag\\
 &\quad\le
 \frac12
 \left(
 \mathbb E\left[
 \mathbf 1_{E_k}
 \sup_{a\le s<b\wedge\tau^R}
\Vert {u(s)}\Vert_X^q
 \right]
 \right)^{1/q},
 \label{eq:uniqueness-induction-estimate}
\end{align}
where we used $b-a=h_R$ and \eqref{eq:uniqueness-time-choice}. Therefore,
\[
 \mathbb E\left[
 \mathbf 1_{E_k}
 \sup_{a\le t<b\wedge\tau^R}\Vert {u(t)}\Vert_X^q
 \right]=0.
\]
Thus, $u(t)=0$
\text{for every }
 $a\le t<b\wedge\tau^R$
\text{ on }$E_k$,
$\mathbb P$\text{-almost surely}.
On $E_k^c=\{\tau^R\le a\}$, the intersection $[a,b)\cap[0,\tau^R)$ is empty. We have therefore proved
\eqref{eq:uniqueness-induction-hypothesis} with $k+1$ in place of $k$.

By induction, for every deterministic $T<\infty$ and every
$R>\Vert {n_{\rm in}}\Vert_X$,
\begin{equation}\label{eq:uniqueness-before-R-exit}
 \mathbb P\left(
 u(t)=0\text{ in }X
 \text{ for every }0\le t<T\wedge\tau^R
 \right)=1.
\end{equation}
Fix $T<\infty$ and let $R\to\infty$ through the positive integers.
Since $\tau^R\uparrow\tau$
 $\mathbb P$\text{-almost surely},  we have, pointwise,
\[
 \sup_{0\le t<T\wedge\tau^R}\Vert {u(t)}\Vert_X^q
 \uparrow
 \sup_{0\le t<T\wedge\tau}\Vert {u(t)}\Vert_X^q.
\]
Therefore, by the monotone convergence theorem and
\eqref{eq:uniqueness-before-R-exit}, we obtain
\begin{align*}
 \mathbb E\sup_{0\le t<T\wedge\tau}\Vert {u(t)}\Vert_X^q
 &=
 \lim_{R\to\infty}
 \mathbb E\sup_{0\le t<T\wedge\tau^R}\Vert{u(t)}\Vert_X^q \\
 &=0.
\end{align*}
Consequently, $u(t)=0$\text{ in }$X$
\text{ for every }$0\le t<T\wedge\tau$, $\mathbb P$\text{-almost surely}.
Taking $T$ through the positive integers yields
\[
 \mathbb P\left(
 n^1(t)=n^2(t)\text{ in }X
 \text{ for every }0\le t<\tau
 \right)=1.
\]
This proves \eqref{eq:pathwise-uniqueness-statement}.
\end{proof}

We shall show the existence of the maximal local mild solution to (\ref{eq:KS-local}) given in Definition \ref{def:maximal-local} and finish the porof of Theorem \ref{thm:local-complete}.  Set $\tau_*=\lim_m\tau_m$ and $n(t)=n^m(t)$ whenever $t<\tau_m$ with $\tau_m$ given in Definition \ref{def:local-mild}, we next construct the maximal local mild solution to (\ref{eq:KS-local}) as defined in \ref{def:maximal-local}, which is 
\begin{lemma}
\label{prop:maximality}
Let $\tau_m$ be the stopping times defined in
\eqref{eq:exit-time}. Then
 $\tau_m\le\tau_{m+1}$
 $\mathbb P$\text{-almost surely},
and $n^{m+1}(t)=n^m(t),$ $0\le t<\tau_m$, $\mathbb P$\text{-almost surely}.
Define $\tau_*=\lim_{m\to\infty}\tau_m$
and, for $0\le t<\tau_*$, 
\begin{equation}\label{eq:maximal-solution-definition}
 n(t)=n^m(t)
 \qquad\text{whenever }t<\tau_m.
\end{equation}
Then $(n,\tau_*)$ is the unique maximal local mild solution of
\eqref{eq:KS-local}.
\end{lemma}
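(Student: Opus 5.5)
The plan has four steps: consistency of the cutoff solutions $n^m$ and $n^{m+1}$, well-definedness of the glued process, verification of the local mild solution property together with the blow-up criterion \eqref{eq:X-blowup-definition}, and uniqueness.

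\textbf{Consistency.} By Lemma \ref{prop:cutoff-to-local}, both $(n^m,\tau_m)$ and $(n^{m+1},\tau_{m+1})$ are local mild solutions with the same initial datum $n_{\rm in}$. Lemma \ref{thm:pathwise-uniqueness} then gives, almost surely, $n^m(t)=n^{m+1}(t)$ for all $t<\tau_m\wedge\tau_{m+1}$.

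To get $\tau_m\le\tau_{m+1}$, argue on the event $\{\tau_{m+1}<\tau_m\}$. There $\Vert n^{m+1}\Vert_{X_t}$ reaches $m+1$ at time $\tau_{m+1}$. By continuity, $n^{m+1}$ coincides with $n^m$ on $[0,\tau_{m+1}]$, so $\Vert n^m\Vert_{X_{\tau_{m+1}}}\ge m+1>m$. This forces $\tau_m\le\tau_{m+1}$, a contradiction. Hence $\tau_m\le\tau_{m+1}$ a.s., and $n^{m+1}=n^m$ on $[0,\tau_m)$.

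Taking a countable intersection of full-measure sets, the process $n$ in \eqref{eq:maximal-solution-definition} is well defined on $[0,\tau_*)$. It is adapted and continuous in $X$, since on each $[0,\tau_m)$ it equals the adapted continuous process $n^m$. The limit $\tau_*$ of stopping times is a stopping time.

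\textbf{Local mild solution.} I would use localizing times $\rho_j:=\rho_{m_j,j}$, taken from Lemma \ref{prop:cutoff-to-local} along a diagonal sequence $m_j\uparrow\infty$. For instance, take
\[
\rho_j=\inf\{t:\Vert n^{m_0+j}\Vert_{X_t}\ge m_0+j-1/j\}\wedge j.
\]
Then $\rho_j<\tau_{m_0+j}\le\tau_*$ on $\{\tau_*<\infty\}$, and $\rho_j$ is nondecreasing by the consistency step.

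To see $\rho_j\uparrow\tau_*$: for any $t<\tau_*$ we have $t<\tau_m$ for some $m$, so $\sup_{[0,t]}\Vert n\Vert_X<m$. For large $j$, the level $m_0+j-1/j$ exceeds this supremum and $j>t$, so $\rho_j>t$.

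On $[0,\rho_j]$ the cutoff equals one, so \eqref{eq:cutoff-mild-global} reduces to \eqref{eq:untruncated-mild-local}. The moment bounds follow since $\Vert n(t\wedge\rho_j)\Vert_X\le m_0+j$.

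\textbf{Maximality.} On $\{\tau_*<\infty\}$ we have $\tau_m\le\tau_*<\infty$ for all $m$. By continuity of $n^m$ in $X$, $\Vert n\Vert_{X_{\tau_m}}=m$; this needs $\tau_m<\tau_{m+1}$ or at least that $n=n^{m+1}$ is continuous up to $\tau_m$, which holds since $\Vert n^{m+1}\Vert<m+1$ near $\tau_m$. Thus $\sup_{t<\tau_*}\Vert n\Vert_{X_t}\ge m$ for every $m$, which gives \eqref{eq:X-blowup-definition}.

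\textbf{Uniqueness.} Let $(\tilde n,\tilde\tau)$ be another maximal local mild solution. Lemma \ref{thm:pathwise-uniqueness} gives $\tilde n=n$ on $[0,\tau_*\wedge\tilde\tau)$. It remains to rule out $\tilde\tau>\tau_*$ and $\tilde\tau<\tau_*$ on sets of positive probability.

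If $\tau_*<\tilde\tau$, then $\tau_*<\infty$, and $\tilde n$ is continuous on $[0,\tau_*]\subset[0,\tilde\tau)$, so $\sup_{t<\tau_*}\Vert n\Vert_{X_t}=\sup_{t<\tau_*}\Vert\tilde n\Vert_{X_t}<\infty$. This contradicts the blow-up property of $n$. Symmetrically, the blow-up property of $\tilde n$ excludes $\tilde\tau<\tau_*$. Hence $\tilde\tau=\tau_*$ and $\tilde n=n$ a.s.

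\textbf{Main obstacle.} I expect the delicate points to be the null-set bookkeeping and the strict inequality $\rho_j<\tau_*$ required in Definition \ref{def:local-mild}. The $\rho_j$ must be genuine stopping times, increasing and converging to $\tau_*$, while staying strictly below it on $\{\tau_*<\infty\}$. I would handle this with the diagonal choice above, monotonicity from $\tau_m\le\tau_{m+1}$, and intersection over countably many full-measure sets.
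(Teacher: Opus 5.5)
Your proposal is correct and follows essentially the same route as the paper: pathwise uniqueness (Lemma~\ref{thm:pathwise-uniqueness}) for consistency of the cutoff solutions, gluing along $\tau_m\uparrow\tau_*$, the blow-up criterion from $\|n\|_{X_{\tau_m}}=m$, and uniqueness via the same continuity contradiction. The only difference is the localizing sequence. You use the diagonal sub-level times $\rho_j<\tau_{m_0+j}$, which are automatically strictly below $\tau_*$ and give a deterministic moment bound. The paper instead localizes with $\tau_m$ itself, after proving $\tau_m<\tau_{m+1}$ on $\{\tau_m<\infty\}$. You also spell out the contradiction argument for $\tau_m\le\tau_{m+1}$, which the paper only asserts as a consequence of Lemma~\ref{thm:pathwise-uniqueness}.
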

\begin{proof}
By using  Lemma~\ref{thm:pathwise-uniqueness}, we have
\begin{equation}\label{eq:maximal-cutoff-consistency}
 \tau_m\le\tau_{m+1}
\end{equation}
and
\begin{equation}\label{eq:maximal-solution-consistency}
 n^{m+1}(t)=n^m(t),
 \qquad 0\le t<\tau_m,
\end{equation}
almost surely.

Iterating \eqref{eq:maximal-cutoff-consistency} and
\eqref{eq:maximal-solution-consistency}, we obtain, for every pair of
integers $\ell\ge m$,
\begin{equation}\label{eq:maximal-iterated-consistency}
 \tau_m\le\tau_\ell,
 \qquad
 n^\ell(t)=n^m(t),
 \qquad 0\le t<\tau_m,
\end{equation}
almost surely.

We next show that $(\tau_m)$ is an announcing sequence for $\tau_*$.
Recall that $\tau_m
 =
 \inf\left\{
 t>0:\Vert {n^m}\Vert_{X_t}\ge m
 \right\}.$
Since $\Vert {n_{\rm in}}\Vert_X<m$
and
 $t\mapsto\Vert {n^m}\Vert_{X_t}$
is continuous, on the event $\{\tau_m<\infty\}$, we have
\begin{equation}\label{eq:maximal-exit-value}
\Vert {n^m}\Vert_{\tau_m}=m.
\end{equation}
We claim that
\begin{equation}\label{eq:maximal-strict-exit-order}
 \tau_m<\tau_{m+1}
 \qquad\text{on }\{\tau_m<\infty\}.
\end{equation}
Indeed, by continuity, the equality
\eqref{eq:maximal-solution-consistency} extends to $t=\tau_m$. Hence
 $\Vert {n^{m+1}}\Vert_{X_{\tau_m}}
 =
\Vert {n^m}\Vert_{X_{\tau_m}}
 =
 m
 <
 m+1.$
By the definition of $\tau_{m+1}$, this proves
\eqref{eq:maximal-strict-exit-order}.

On the event $\{\tau_*<\infty\}$, we have $\tau_m\le\tau_*<\infty$
for every $m$. Thus, every $\tau_m$ is finite, and
\eqref{eq:maximal-strict-exit-order} implies $\tau_m<\tau_{m+1}\le\tau_*$.  Consequently,
\begin{equation}\label{eq:maximal-announcing}
 \tau_m<\tau_*
 \quad\text{for every }m,
 \qquad
 \tau_m\uparrow\tau_*
 \quad\text{on }\{\tau_*<\infty\}.
\end{equation}
Therefore, $(\tau_m)_m$ announces $\tau_*$ Moreover, for every
$T<\infty$ and $q\ge2$,
\[
 \mathbb E\sup_{0\le t\le T}
\Vert {n(t\wedge\tau_m)}\Vert_X^q
 \le
 \mathbb E\sup_{0\le t\le T}
\Vert {n^m(t)}\Vert_{X}^q
 <\infty.
\]
It follows that $(n,\tau_*)$ is a local mild solution.

It remains to prove the maximality. On $\{\tau_*<\infty\}$,
\eqref{eq:maximal-announcing} ensures that $\tau_m<\tau_*$. Therefore,
$n$ is defined through time $\tau_m$.   Since $\tau_m\uparrow\tau_*$, we obtain
\begin{align}
 \limsup_{t\uparrow\tau_*}\Vert {n}\Vert_{X_t}
 &\ge
 \limsup_{m\to\infty}\Vert {n}\Vert_{X_{\tau_m}} \notag\\
 &=\lim_{m\to\infty}m=\infty
 \qquad\text{on }\{\tau_*<\infty\}.
 \label{eq:maximal-blowup}
\end{align}
Hence, $(n,\tau_*)$ is a maximal local mild solution according to
Definition~\ref{def:maximal-local}.

Finally, suppose that
$(\widetilde n,\widetilde\tau)$ is another maximal local mild solution
with the same initial value and Brownian motion. Lemma \ref{thm:pathwise-uniqueness} yields
\[
 n(t)=\widetilde n(t)
 \qquad
 \text{for every }
 0\le t<\tau_*\wedge\widetilde\tau,
 \quad \mathbb P\text{-almost surely}.
\]
If $\tau_*<\widetilde\tau$ on a set of positive probability, then
$\widetilde n$ is continuous, and hence bounded in $X$, on a compact
interval containing $[0,\tau_*]$. This contradicts
\eqref{eq:maximal-blowup}. Thus, $\widetilde\tau\le\tau_*$ $\mathbb P$\text{-almost surely}.  Interchanging the two solutions gives
 $\tau_*\le\widetilde\tau$
 $\mathbb P$\text{-almost surely}.
Therefore,
 $\widetilde\tau=\tau_*$, $\mathbb P$-\text{-almost surely},
and pathwise uniqueness in Lemma \ref{thm:pathwise-uniqueness} then gives
\[
 \widetilde n(t)=n(t)
 \qquad
 \text{for every }0\le t<\tau_*,
 \quad \mathbb P\text{-almost surely}.
\]
Thus, the maximal local mild solution is unique.
\end{proof}

Now, we are ready to construct the maximal local mild solution to (\ref{eq:KS-local}) and prove Theorem \ref{thm:local-complete}:
\begin{proof}[Proof of Theorem \ref{thm:local-complete}]
Thanks to Lemma \ref{prop:global-cutoff}, we have the  existence of the global solution to the cutoff system;
Lemma \ref{prop:cutoff-to-local} shows the existence of the local mild solution to (\ref{eq:KS-local}) shown in Definition \ref{def:local-mild}. With the aid of  Pathwise uniqueness exhibited in   Lemma \ref{thm:pathwise-uniqueness} and Lemma  
\ref{prop:maximality}, we construct the unique maximal local mild solution to (\ref{eq:KS-local}).
Regularity of the solution shown in Theorem \ref{thm:local-complete} is obtained by using  Lemma  \ref{prop:pathwise-cutoff-regularity} and the local compactness. 
\end{proof}

\section{Global-in-time existence}
Having established the existence of a maximal local mild solution in Theorem \ref{thm:local-complete}, we next prove its global existence under the assumption that $A$ is sufficiently large in  \eqref{eq:KS-local}.  To begin with, we focus on 
 the passive scalar equation (\ref{eq:passive-local}) and shall establish  its estimate  by dividing the time interval into small blocks. 
\subsection{Blockwise contraction for the passive scalar equation}
Set $\tau_\nu=\nu^{-1/2}$ and $t_j=j\tau_\nu$. Fixing $j$, then for $s\in[t_j,t_{j+1}]$, we define the shear transformation
\[
h(x,y,s)
:=f\bigl(x+y(W_r-W_{t_j}),y,s\bigr),
\qquad f_j:=f(\cdot,\cdot,t_j),
\]
where $f$ is a solution to \eqref{eq:passive-local}.
Moreover, we obtain $h$ satisfies
\[
\partial_s h
=\nu\left[
\partial_x^2+
\bigl(\partial_y-(W_s-W_{t_j})\partial_x\bigr)^2
\right]h,
\qquad h(\cdot,\cdot,t_j)=f_j.
\]
Then, we give some heuristic argument. Taking Fourier transforms in $(x,y)$ and solving the resulting
scalar ODE yields
\[
\widehat h_k(s,\eta)
=
\exp\!\left[
-\nu k^2(s-t_j)
-\nu\int_{t_j}^{s}
\bigl(\eta-k(W_r-W_{t_j})\bigr)^2\,\mathrm dr
\right]
\widehat{f_j}_k(\eta),
\]
where $\hat h_k$ and $\hat f_{j_k}$ denote the $k$-th Fourier mode.   Assume $f_{j,0}=0$, then for every $k\not=0,$ we have
\[
\begin{aligned}
\nu\int_{t_j}^{t_{j+1}}
\bigl(\eta-k(W_r-W_{t_j})\bigr)^2\,\mathrm dr
&=\nu k^2\int_{t_j}^{t_{j+1}}
\left(W_r-W_{t_j}-\frac{\eta}{k}\right)^2\,\mathrm dr\\
&\ge k^2Z_j^\nu,
\end{aligned}
\]
where
\begin{equation}\label{eq:block-variance}
 Z_j^\nu:=\nu\inf_{a\in\mathbb R}
       \int_{t_j}^{t_{j+1}}(W_r-W_{t_j}-a)^2\,d r.
\end{equation}
 Plancherel’s identity further yields 
\[
 {
\|S_\nu(t_{j+1},t_j)f_j\|_{L^2(\Omega)}
\le e^{-\nu\tau_\nu-Z_j^\nu}\|f_j\|_{L^2(\Omega)}
\le e^{-Z_j^\nu}\|f_j\|_{L^2(\Omega)},
}
\]
where $S_\nu(t_{j+1},t_j)$ is the random solution operator  to \eqref{eq:passive-local}. 
Thus, \(Z_j^\nu\) provides a uniform lower bound for the dissipation exponent over all nonzero \(x\)-modes and all \(y\)-frequencies on the \(j\)-th time block.
Now, we prove the following preliminary lemma.  
\begin{proposition}\label{prop:blocks}
For each fixed \(\nu\), we have the random variables \(Z_j^\nu\) are independent and identically distributed, with common law equal to that of
\begin{align}\label{rescaled_brownian}
Z=\inf_{a\in\mathbb R}\int_0^1(B_r-a)^2\,dr,
\end{align}where \(B=(B_r)_{0\le r\le1}\) is a standard one-dimensional Brownian motion starting at zero.
Moreover, there are universal $z_*>0$, $p_*>0$, and $c,C>0$ such that, if
$I_j^\nu=\mathbf1_{\{Z_j^\nu\ge z_*\}}$, then
\begin{equation}\label{eq:block-concentration}
 \mathbb P\left(\exists\ell\ge N:
          \sum_{j=0}^{\ell-1}I_j^\nu<\frac{p_*\ell}{2}\right)
 \le Ce^{-cN},\qquad N\in\mathbb N,
\end{equation}
where $c>0$ and $C>0$ are constants. In addition, for every $h\in L^2(\Omega)$ with $h_0=0$, pathwise,
\begin{equation}\label{eq:pathwise-linear-block}
 \|S_\nu(t_{j+1},t_j)h\|_{L^2(\Omega)}\le e^{-Z_j^\nu}\|h\|_{L^2(\Omega)}.
\end{equation}
\end{proposition}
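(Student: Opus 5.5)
The proof splits into three independent parts: the distributional claim, the concentration bound \eqref{eq:block-concentration}, and the pathwise estimate \eqref{eq:pathwise-linear-block}.

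\textbf{Distribution of the $Z_j^\nu$.} $Z_j^\nu$ is a measurable functional of the increment path $(W_{t_j+r}-W_{t_j})_{0\le r\le\tau_\nu}$. The blocks $[t_j,t_{j+1}]$ are disjoint up to endpoints, so independence of Brownian increments gives that the $Z_j^\nu$ are independent and identically distributed. For the law, substitute $r=t_j+\tau_\nu u$ and use Brownian scaling, $W_{t_j+\tau_\nu u}-W_{t_j}\overset{\mathrm{law}}{=}\sqrt{\tau_\nu}\,B_u$. Replacing $a$ by $\sqrt{\tau_\nu}\,a$ inside the infimum, the integral becomes $\tau_\nu^2\int_0^1(B_u-a)^2\,du$. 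Since $\tau_\nu=\nu^{-1/2}$, we have $\nu\tau_\nu^2=1$, so $Z_j^\nu\overset{\mathrm{law}}{=}Z$, which is the reason for this choice of block length.

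\textbf{Choice of $z_*$, $p_*$ and the concentration bound.} The infimum in $Z$ is attained at $a=\int_0^1B_u\,du$, so $Z$ is the empirical variance of $B$ on $[0,1]$. We have $Z=0$ only if $B$ is a.s.\ constant on $[0,1]$, which has probability zero. Hence $\mathbb P(Z>0)=1$, and we can choose $z_*>0$ with $p_*:=\mathbb P(Z\ge z_*)>0$; for instance $p_*\ge\tfrac12$ for $z_*$ small. The $I_j^\nu$ are i.i.d.\ Bernoulli$(p_*)$, independent of $\nu$. Hoeffding's inequality gives
\[
\mathbb P\Bigl(\sum_{j<\ell}I_j^\nu<\tfrac{p_*\ell}{2}\Bigr)\le e^{-p_*^2\ell/2}.
\]
A union bound over $\ell\ge N$ then gives a geometric sum bounded by $Ce^{-cN}$ with $c=p_*^2/2$ and $C=(1-e^{-c})^{-1}$. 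The only delicate point is checking $\mathbb P(Z>0)=1$ and confirming that the constants do not depend on $\nu$; both follow from the above.

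\textbf{Pathwise estimate \eqref{eq:pathwise-linear-block}.} This is the main step, and it amounts to making the heuristic rigorous. I would use the representation from Proposition~\ref{prop:pathwise-kernel}, $S_\nu(t,s)h=(K_{t,s}*h)(x-yB_t^s,y)$ with $s=t_j$.
\begin{itemize}
\item The shear change of variables is measure preserving, and a shear in $x$ preserves the Fourier support in $k$, so $h_0=0$ propagates.
\item By Plancherel \eqref{eq:cylinder-Plancherel}, it suffices to bound the multiplier of $K_{t,s}$, which is $\exp(-\nu\xi^TQ_{t,s}\xi)$ at $\xi=(k,\eta)$. By \eqref{eq:shear-matrix-definitions}, $\xi^TA_a^s\xi=k^2+(\eta-kB_a^s)^2$.
\item For $k\neq0$, write $(\eta-kB_a^s)^2=k^2(B_a^s-\eta/k)^2$ and take the infimum over $a=\eta/k$. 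This gives $\nu\xi^TQ\xi\ge\nu k^2\tau_\nu+k^2Z_j^\nu\ge Z_j^\nu$.
\item The multiplier is therefore at most $e^{-Z_j^\nu}$ on every nonzero mode, which yields the bound for all $h\in L^2(\Omega)$ with $h_0=0$ by density.
\end{itemize}
The one point to check is that the periodized kernel's $x$-Fourier coefficients equal the whole-space multiplier sampled at integer $k$; this is the standard Poisson summation identity.
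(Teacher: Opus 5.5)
Your proposal is correct and follows essentially the same route as the paper. It uses the same Brownian-scaling argument with $\nu\tau_\nu^2=1$ for the law of $Z_j^\nu$, and an exponential-moment bound plus a union bound over $\ell\ge N$ for \eqref{eq:block-concentration}; you quote Hoeffding, giving $c=p_*^2/2$, where the paper optimizes a Chernoff bound by hand. For the pathwise estimate you obtain the same multiplier $\exp\bigl(-\nu k^2\tau_\nu-\nu\int_{t_j}^{t_{j+1}}(\eta-k(W_r-W_{t_j}))^2\,dr\bigr)\le e^{-Z_j^\nu}$ on nonzero modes, reading it off the Gaussian kernel of Proposition~\ref{prop:pathwise-kernel} via Poisson summation rather than re-solving the Fourier ODE for the sheared solution $H$ as the paper does. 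This route lets Plancherel act directly on $K_{t,s}*h$ for $h\in L^2(\Omega)$, so your density step is optional.
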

\begin{proof}
We have the fact that for continuous $b$ on $[0,1]$,
\begin{align}\label{complete_square}
 \inf_a\int_0^1(b_r-a)^2\,d r
 =\int_0^1(b_r-\bar b)^2\,d r\text{ with }
 \bar b=\int_0^1b_r\,d r.
\end{align}
Since $\tau_\nu=\nu^{-1/2}$ and $t_j=j\tau_\nu$, we define  $r=t_j+\tau_\nu s$.  Since $d r=\tau_\nu\,d s$, $0\le s\le1$,
we then obtain
\begin{align*}
Z_j^\nu
&=
\nu\inf_{a\in\mathbb R}
\int_{t_j}^{t_j+\tau_\nu}
(W_r-W_{t_j}-a)^2\,d r\\
&=
\nu\tau_\nu\inf_{a\in\mathbb R}
\int_0^1
(W_{t_j+\tau_\nu s}-W_{t_j}-a)^2\,d s.
\end{align*}
Let $B=(B_s)_{0\le s\le1}$ be a standard one-dimensional
Brownian motion, then we use rescaling to get
\[
\bigl(W_{t_j+\tau_\nu s}-W_{t_j}\bigr)_{0\le s\le1}
\overset{\mathrm{law}}{=}
\bigl(\sqrt{\tau_\nu}\,B_s\bigr)_{0\le s\le1}
\quad\text{in }C([0,1];\mathbb R).
\]
Consequently,
\begin{align*}
Z_j^\nu
&\overset{\mathrm{law}}{=}
\nu\tau_\nu\inf_{a\in\mathbb R}
\int_0^1
(\sqrt{\tau_\nu}\,B_s-a)^2\,d s\\
&=
\nu\tau_\nu^2\inf_{a\in\mathbb R}
\int_0^1
\left(B_s-\frac{a}{\sqrt{\tau_\nu}}\right)^2\,d s.
\end{align*}
Therefore, by using \eqref{complete_square}, one has
\[
\inf_{a\in\mathbb R}
\int_0^1
\left(B_s-\frac{a}{\sqrt{\tau_\nu}}\right)^2\,d s
=
\inf_{a\in\mathbb R}
\int_0^1(B_s-a)^2\,d s
=
Z.
\]
It then follows that $Z_j^\nu\overset{\mathrm{law}}{=}\nu\tau_\nu^2 Z.$  Finally, noting that  $\nu\tau_\nu^2
=
\nu(\nu^{-1/2})^2
=
1$, we have $Z_j^\nu\overset{\mathrm{law}}{=}Z.$

Noting that disjoint blocks $[t_j,t_{j+1})$ depend on independent increments.   If $Z=0$, continuity of the Brownian motion implies  $B_s$ is a constant; we further have $B_0=0$, which is contained in $\{B_1=0\}$ and  $\mathbb P(Z=0)=0.$  In addition,  ${\mathbb P(Z>0)=1.}$
Noting that $\{Z>0\}=\bigcup_{\ell\ge1}\{Z\ge1/\ell\}$, we  choose
$z_*>0$ with $p_*=\mathbb P(Z\ge z_*)>0$ with $z_*=\frac{1}{l_*}$.

Let $\theta>0$ be fixed. Since the function $x\mapsto e^{-\theta x}$
is strictly decreasing, we have
\[
\left\{
\sum_{j=0}^{\ell-1}I_j^\nu<\frac{p_*\ell}{2}
\right\}
=
\left\{
e^{-\theta\sum_{j=0}^{\ell-1}I_j^\nu}
>
e^{-\theta p_*\ell/2}
\right\}.
\]
 By applying  Markov's inequality $e^{-\theta\sum_{j=0}^{\ell-1}I_j^\nu}$, one obtains
\begin{align}\label{substitution_preceding-inequality}
\mathbb P\left(
\sum_{j=0}^{\ell-1}I_j^\nu<\frac{p_*\ell}{2}
\right)
&=
\mathbb P\left(
e^{-\theta\sum_{j=0}^{\ell-1}I_j^\nu}
>
e^{-\theta p_*\ell/2}
\right)\nonumber\\
&\le
\frac{
\mathbb E\left[e^{-\theta\sum_{j=0}^{\ell-1}I_j^\nu}\right]
}{
e^{-\theta p_*\ell/2}
}=
e^{\theta p_*\ell/2}
\mathbb E\left[e^{-\theta\sum_{j=0}^{\ell-1}I_j^\nu}\right].
\end{align}
By using the independence of variables $I_j^\nu$, we then have 
\begin{align}\label{multiplication_Ij}
\mathbb E\left[e^{-\theta\sum_{j=0}^{\ell-1}I_j^\nu}\right]
&=
\mathbb E\left[
\prod_{j=0}^{\ell-1}e^{-\theta I_j^\nu}
\right]=
\prod_{j=0}^{\ell-1}
\mathbb E\left[e^{-\theta I_j^\nu}\right].
\end{align}
In addition, thanks to the facts that 
$\mathbb P(I_j^\nu=0)=1-p_*$ and $\mathbb P(I_j^\nu=1)=p_*$,
we find $\mathbb E\left[e^{-\theta I_j^\nu}\right]
=
1-p_*+p_*e^{-\theta}$.  It then follows from (\ref{multiplication_Ij}) that $\mathbb E\left[e^{-\theta\sum_{j=0}^{\ell-1}I_j^\nu}\right]
=
(1-p_*+p_*e^{-\theta})^\ell.$
Substituting this identity into (\ref{substitution_preceding-inequality}) yields
\begin{align}\label{upperbound_good_event}
\mathbb P\left(
\sum_{j=0}^{\ell-1}I_j^\nu<\frac{p_*\ell}{2}
\right)\leq 
\left[
e^{\theta p_*/2}
(1-p_*+p_*e^{-\theta})
\right]^\ell.
\end{align}
Taking the logarithm of the right hand side in (\ref{upperbound_good_event}) and differentiating it at $\theta=0$, then by using the continuity, we find there exists
a deterministic $\theta_0>0$, depending only on $p_*$, such that $\frac{p_*}{2}
-
\frac{p_*e^{-\theta}}
     {1-p_*+p_*e^{-\theta}}
\le -\frac{p_*}{4},$ $0\le\theta\le\theta_0,$ which implies
\begin{align*}
&\log\left[
e^{\theta_0p_*/2}(1-p_*+p_*e^{-\theta_0})
\right]\\
&\qquad=
\int_0^{\theta_0}
\left(
\frac{p_*}{2}
-
\frac{p_*e^{-\theta}}
     {1-p_*+p_*e^{-\theta}}
\right)d\theta\le
-\int_0^{\theta_0}\frac{p_*}{4}\,d\theta
=
-\frac{p_*\theta_0}{4}.
\end{align*}
Set $c=\frac{p_*\theta_0}{4}>0.$
Taking $\theta=\theta_0$ in (\ref{upperbound_good_event}), we obtain, for every positive integer $\ell$,
\begin{align*}
\mathbb P\left(
\sum_{j=0}^{\ell-1}I_j^\nu<\frac{p_*\ell}{2}
\right)
&\le
\left[
e^{\theta_0p_*/2}(1-p_*+p_*e^{-\theta_0})
\right]^\ell\\
&=
\exp\left(
\ell\log\left[
e^{\theta_0p_*/2}(1-p_*+p_*e^{-\theta_0})
\right]
\right)\le e^{-c\ell}.
\end{align*}
Now fix a positive integer $N$, then by using  countable subadditivity, one finds
\begin{align*}
\mathbb P\left(
\exists\ell\ge N:
\sum_{j=0}^{\ell-1}I_j^\nu<\frac{p_*\ell}{2}
\right)=&
\mathbb P\left(
\bigcup_{\ell=N}^{\infty}
\left\{
\sum_{j=0}^{\ell-1}I_j^\nu<\frac{p_*\ell}{2}
\right\}
\right)\\
&\le
\sum_{\ell=N}^{\infty}
\mathbb P\left(
\sum_{j=0}^{\ell-1}I_j^\nu<\frac{p_*\ell}{2}
\right)\le
\sum_{\ell=N}^{\infty}e^{-c\ell}\\
&=
e^{-cN}\sum_{k=0}^{\infty}(e^{-c})^k=
\frac{e^{-cN}}{1-e^{-c}}.
\end{align*}
Thus, with $C=\frac{1}{1-e^{-c}}$, we further conclude that
\[
\mathbb P\left(
\exists\ell\ge N:
\sum_{j=0}^{\ell-1}I_j^\nu<\frac{p_*\ell}{2}
\right)
\le Ce^{-cN},
\]
where positive constants $c$ and $C$  are independent of $\nu$ and $N$.
It completes the proof of \eqref{eq:block-concentration}.


It remains to prove \eqref{eq:pathwise-linear-block}.  For fixed $\nu>0$ and a block
index $j$. Recall that $\tau_\nu=\nu^{-1/2}$, $t_j=j\tau_\nu$ and $t_{j+1}=t_j+\tau_\nu.$  We first consider $h\in C_c^\infty(\Omega)$ with $h_0=0$.
The extension to $h\in L^2(\Omega)$ with $h_0=0$
is given at the end.  Define $B_r=W_{t_j+r}-W_{t_j}$,
$0\le r\le\tau_\nu$ and $f(x,y,r)
=\bigl(S_\nu(t_j+r,t_j)h\bigr)(x,y),$ then we have  $f$ satisfies
\[
d f+y\partial_xf\circ d B_r
=
\nu\Delta f\,d r,
\qquad
f(0)=h.
\]
Define  $H(r,x,y)=f(r,x+yB_r,y).$
Since $B_0=0$, we have $H(0)=h$.
By using the Stratonovich form of the It\^o formula, one obtains
\begin{align}
\begin{cases}\label{H-equation1}
\displaystyle
\partial_rH
=
\nu\left(
(1+B_r^2)\partial_x^2H
-2B_r\partial_{xy}H
+\partial_y^2H
\right),\\[1mm]
H(0)=h.
\end{cases}
\end{align}
As shown in \eqref{Fourier_mode_x_f} and \eqref{Fourier_transform_y}, we define 
\[
H_k(r,y)
=
\frac1{2\pi}\int_{\mathbb T} H(r,x,y)e^{-ikx}\,d x, \text{ and }
\widehat H_k(r,\eta)
=
\frac1{\sqrt{2\pi}}
\int_{\mathbb R }H_k(r,y)e^{-i\eta y}\,d y,
~~\eta\in\mathbb R,
\]
where $k\in\mathbb Z$,
then we apply the Fourier transform to (\ref{H-equation1}) and obtain
\begin{align*}
\partial_r\widehat H_k(r,\eta)
&=
\nu\left(
-(1+B_r^2)k^2+2B_rk\eta-\eta^2
\right)\widehat H_k(r,\eta)\\
&=
-\nu\left(
k^2+(\eta-kB_r)^2
\right)\widehat H_k(r,\eta),
\end{align*}
with $\widehat H_k(0,\eta)=\widehat h_k(\eta).$
Solving it, we have
\[
\widehat H_k(\tau_\nu,\eta)
=
\exp\left[
-\nu k^2\tau_\nu
-\nu\int_0^{\tau_\nu}(\eta-kB_r)^2\,d r
\right]\widehat h_k(\eta).
\]
Since $B_r=W_{t_j+r}-W_{t_j}$ and 
$s=t_j+r$, we further obtain 
\begin{align}\label{Fourier_representation}
\widehat H_k(\tau_\nu,\eta)
=
\exp\left[
-\nu k^2\tau_\nu
-\nu\int_{t_j}^{t_{j+1}}
\bigl(\eta-k(W_s-W_{t_j})\bigr)^2\, d s
\right]\widehat h_k(\eta).
\end{align}
By using change of variables, we obtain from (\ref{eq:block-variance}) that  
\begin{align*}
Z_j^\nu
&=
\nu\inf_{a\in\mathbb R}
\int_{t_j}^{t_{j+1}}
(W_s-W_{t_j}-a)^2\,d s=
\nu\inf_{a\in\mathbb R}
\int_0^{\tau_\nu}(B_r-a)^2\,d r.
\end{align*}
In addition, for every $k\in\mathbb Z\setminus\{0\}$ and $\eta\in\mathbb R$,
\begin{align*}
\nu\int_0^{\tau_\nu}(\eta-kB_r)^2\,d r
&=
\nu k^2
\int_0^{\tau_\nu}
\left(B_r-\frac{\eta}{k}\right)^2\,d r\\
&\ge
\nu k^2\inf_{a\in\mathbb R}
\int_0^{\tau_\nu}(B_r-a)^2\,d r=
k^2Z_j^\nu\ge Z_j^\nu.
\end{align*}

Substituting this into (\ref{Fourier_representation}) indicates that 
\begin{align}\label{preceding-multiplier-bound}
|\widehat H_k(\tau_\nu,\eta)|
&\le
e^{-\nu k^2\tau_\nu}
e^{-k^2Z_j^\nu}
|\widehat h_k(\eta)|\le
e^{-Z_j^\nu}|\widehat h_k(\eta)|,
\end{align}
where we used $e^{-\nu k^2\tau_\nu}\le1$.
 
Since $h_0=0$, we have $\widehat h_0=0$.
The Fourier representation therefore implies
$\widehat H_0(r,\eta)=0$ for every $r$.
Using Plancherel's identity and (\ref{preceding-multiplier-bound}),
we obtain
\begin{align*}
\|H(\tau_\nu)\|_{L^2(\Omega)}^2
&=
2\pi\sum_{k\ne0}
\int_{\mathbb R}|\widehat H_k(\tau_\nu,\eta)|^2\,d\eta\le
2\pi\sum_{k\ne0}
\int_{\mathbb R}
e^{-2Z_j^\nu}|\widehat h_k(\eta)|^2\,d\eta\\
&=
e^{-2Z_j^\nu}
\left(
2\pi\sum_{k\ne0}
\int_{\mathbb R}|\widehat h_k(\eta)|^2\,d\eta
\right)=
e^{-2Z_j^\nu}\|h\|_{L^2(\Omega)}^2.
\end{align*}
We now transform to the original coordinates.  Noting that $f(r,x,y)=H(r,x-yB_r,y)$, thus $\bigl(S_\nu(t_{j+1},t_j)h\bigr)(x,y)
=
H(\tau_\nu,x-yB_{\tau_\nu},y).$
  Consequently,
\begin{align*}
\|S_\nu(t_{j+1},t_j)h\|_{L^2(\Omega)}^2
&=
\int_{\mathbb R}\int_{\mathbb T}
|H(\tau_\nu,x-yB_{\tau_\nu},y)|^2\,d xd y\\
&=
\int_{\mathbb R}\int_{\mathbb T}
|H(\tau_\nu,x,y)|^2\,d xd y\\
&=
\|H(\tau_\nu)\|_{L^2(\Omega)}^2\le
e^{-2Z_j^\nu}\|h\|_{L^2(\Omega)}^2.
\end{align*}
It follows that  $\|S_\nu(t_{j+1},t_j)h\|_{L^2(\Omega)}
\le e^{-Z_j^\nu}\|h\|_{L^2(\Omega)}.$

We now extend the argument to the case of  $h\in L^2(\Omega)$. 
Let $h\in L^2(\Omega)$ with $h_0=0$.
Choose $\varphi_m\in C_c^\infty(\Omega)$ such that
$\varphi_m\to h$ in $L^2$, and set $h_m=\varphi_{m,\not=}.$
Then $h_m\in C_c^\infty(\Omega)$, $h_{m,0}=0$, and
\[
\|h_m-h\|_{L^2}
=
\|(\varphi_m-h)_{\not=}\|_{L^2(\Omega)}
\le\|\varphi_m-h\|_{L^2(\Omega)}
\rightarrow0.
\]
Using the $L^2$ contraction of $S_\nu$ shown in Proposition \ref{prop:pathwise-kernel},
together with the estimate for $h_m$ shown above, we obtain
\begin{align*}
\|S_\nu(t_{j+1},t_j)h\|_{L^2(\Omega)}
&\le
\|S_\nu(t_{j+1},t_j)(h-h_m)\|_{L^2(\Omega)}
+\|S_\nu(t_{j+1},t_j)h_m\|_{L^2(\Omega)}\\
&\le
\|h-h_m\|_{L^2(\Omega)}
+e^{-Z_j^\nu}\|h_m\|_{L^2(\Omega)}.
\end{align*}
Letting $m\to\infty$, we further obtain 
\[
\|S_\nu(t_{j+1},t_j)h\|_{L^2(\Omega)}
\le e^{-Z_j^\nu}\|h\|_{L^2(\Omega)}
\]
for every $h\in L^2(\Omega)$ with $h_0=0$.
This finishes the  proof of \eqref{eq:pathwise-linear-block}.
\end{proof}
Proposition \ref{prop:blocks} shows that, on an event of high probability, the random solution operator \(S_\nu(t,s)\) associated with \eqref{eq:passive-local} satisfies the stated contraction estimate.  In the next subsection, by using Proposition  \ref{prop:blocks}, we decompose \eqref{eq:KS-local} into Fourier modes and establish the desired nonlinear estimate.
\subsection{Mode decomposition and nonlinear estimate}

Recall $f_0(y)=\frac1{2\pi}\int_0^{2\pi}f(x,y)\,d x.$
Define $u=\neqm=n-n_0$, $\psi=\ceqm=c-c_0$.  Then, we have  $n=n_0+u$, $c=c_0+\psi$, $u_0=0$ and $
\psi_0=0.$  Averaging the $n$-equation of \eqref{eq:KS-local} in $x$ gives the following  mode zero equation
\begin{equation}\label{eq:zero-equation}
 \partial_tn_0=\nu n_0''-\nu(n_0c_0')'
                       -\nu\partial_y(u\partial_y\psi)_0.
\end{equation}
We remark that  equation (\ref{eq:zero-equation}) holds in the distributional sense.
Indeed, testing  equation (\ref{eq:zero-equation}) against
$(2\pi)^{-1}\varphi(y)$, where
$\varphi\in C_c^\infty(\mathbb R)$, then the integration by parts yields
\begin{align*}
&\int_{\mathbb R} n_0(t,y)\varphi(y)\,d y
-\int_{\mathbb R} n_0(0,y)\varphi(y)\,d y\\
&\qquad=
\nu\int_0^t\int_{\mathbb R}
n_0(s,y)\varphi''(y)\,d yd s+
\nu\int_0^t\int_{\mathbb R}
\left[
n_0(s,y)c_0'(s,y)
+(u\partial_y\psi)_0(s,y)
\right]\varphi'(y)\,d yd s.
\end{align*}
 Subtracting  equation (\ref{eq:zero-equation}) from \eqref{eq:KS-local}, we have
\begin{equation}\label{eq:nonzero-equation}
du+y\partial_xu\circ dW_t
       =\nu\Delta u\, dt-(\Lin+\Non)\,\mathrm dt,
\end{equation}
where
\begin{align}\Lin&=\nu\left(c_0'\partial_yu+n_0'\partial_y\psi-2n_0u\right),
          \label{eq:L-definition}\\
 \Non&=\nu P_{\neq}\left(\nabla u\cdot\nabla\psi-u^2\right).
          \label{eq:N-definition}
\end{align}
We next estimate $\Lin$ and $\Non$ in terms of $u$ and $\nabla u$.
\begin{lemma}\label{lem:source} For constant $K>0,$
if
\begin{equation}\label{eq:control-ball}
 \|n_0(t)\|_{H^1_y}+\|n(t)\|_{L^4}\le K,
\end{equation}
then, with $X(t)=\|u(t)\|_{L^2}$ and $Y(t)=\|\nabla u(t)\|_{L^2}$, we have
\begin{equation}\label{eq:source-bound}
 \|\Lin(t)\|_2+\|\Non(t)\|_2\le C_{K,M}\nu(X(t)+Y(t)),
\end{equation}
where constant $C_{K,M}>0$ independent of the Brownian path.
\end{lemma}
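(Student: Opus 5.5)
The plan is to estimate each term of $\Lin$ and $\Non$ in $L^2(\Omega)$ separately, using H\"older's inequality in the form $L^\infty\times L^2$ or mixed $L^\infty_yL^2_x\times L^2_yL^\infty_x$. The $L^\infty$ factors will be controlled by $K$ and the conserved mass $M$, and the $L^2$ factors by $X(t)$ or $Y(t)$. Since the bounds are pointwise in time and use only \eqref{eq:control-ball}, mass conservation and deterministic elliptic and interpolation inequalities, the constant will not depend on the Brownian path.

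\textbf{Linear terms.} For $c_0'\partial_y u$ I use the normalization \eqref{eq:poisson-normalization-local}. It gives $|c_0'(y)|\le \int_{\mathbb R}|n_0|\,dz\le C M$, hence $\|c_0'\partial_yu\|_{L^2}\le CM\,Y$. For $n_0 u$, the one-dimensional embedding $H^1_y\subset L^\infty_y$ gives $\|n_0\|_{L^\infty}\le C\|n_0\|_{H^1_y}\le CK$, so $\|n_0u\|_{L^2}\le CK\,X$. For $n_0'\partial_y\psi$, I note that $u_0=0$, $\psi_0=0$ and $-\Delta\psi=u$. The second estimate in \eqref{eq:elliptic-L2} then yields $\|\partial_y\psi\|_{L^\infty_yL^2_x}\le C\|u\|_{L^2}$. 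Since $n_0'$ depends only on $y$,
\[
\|n_0'\partial_y\psi\|_{L^2(\Omega)}\le \|n_0'\|_{L^2_y}\,\|\partial_y\psi\|_{L^\infty_yL^2_x}\le CK\,X .
\]
Together these give $\|\Lin\|_{L^2}\le C_{K,M}\nu(X+Y)$.

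\textbf{Nonlinear terms.} The projection $P_{\neq}$ is an $L^2$ contraction, so it can be dropped. For $\nabla u\cdot\nabla\psi$ I need $\|\nabla\psi\|_{L^\infty}\le C_{K,M}$. Writing $\nabla\psi=\nabla c-(0,c_0')$, I apply \eqref{eq:X-Poisson} with exponent $4$ in place of $p$, which is admissible since the Poisson estimate holds for any exponent $>2$. This gives $\|\nabla c\|_{L^\infty}\le C(\|n\|_{L^1}+\|n\|_{L^4})\le C(M+K)$. The bound on $c_0'$ is the one from the linear step. Hence $\|\nabla u\cdot\nabla\psi\|_{L^2}\le C_{K,M}Y$.

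For $u^2$, I write $\|u^2\|_{L^2}=\|u\|_{L^4}^2$. By Jensen's inequality in $x$, $\|n_0\|_{L^4(\Omega)}\le\|n\|_{L^4(\Omega)}$, so $\|u\|_{L^4}\le 2K$. The Ladyzhenskaya inequality \eqref{eq:Ladyzhenskaya} then gives
\[
\|u\|_{L^4}^2\le 2K\|u\|_{L^4}\le CK\,X^{1/2}(X+Y)^{1/2}\le CK(X+Y).
\]
Summing all contributions and multiplying by the prefactor $\nu$ gives \eqref{eq:source-bound}.

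\textbf{Main obstacle.} The delicate step is the term $n_0'\partial_y\psi$. Here $n_0'$ is only in $L^2_y$, not in $L^\infty$, so one must rely on the anisotropic bound $\|\partial_y\psi\|_{L^\infty_yL^2_x}\lesssim\|u\|_{L^2}$ from Lemma~\ref{lem:X-elliptic}. That bound is valid only because $u$ has zero $x$-mean, which makes the operator $(k^2-\partial_y^2)^{-1}$ uniformly bounded for $k\neq 0$. The quadratic term $u^2$ also needs care to keep the bound linear in $X+Y$: I use the a priori $L^4$ bound from $K$ on one factor and interpolation on the other.
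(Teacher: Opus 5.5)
Your proposal is correct and follows essentially the same route as the paper: the same H\"older splittings for the three linear terms (using $\|c_0'\|_{L^\infty}\lesssim M$, $\|n_0\|_{L^\infty_y}\lesssim K$, and the anisotropic bound $\|\partial_y\psi\|_{L^\infty_yL^2_x}\lesssim X$ from \eqref{eq:elliptic-L2}), and, for $\Non$, an $L^\infty$ bound on $\nabla\psi$ together with Ladyzhenskaya. The differences are minor. The paper applies \eqref{eq:X-Poisson} to $u$ directly (using $\|u\|_{L^1}\le 2M$) rather than splitting $\nabla\psi=\nabla c-(0,c_0')$. It also closes the $u^2$ term via $\|u\|_{L^4}^2\le CX(X+Y)$ together with $X\le M^{1/3}K^{2/3}$, whereas you spend $\|u\|_{L^4}\le 2K$ on one factor, which is equally valid.
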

\begin{proof}

Recall that $n\ge0$, $\int_\Omega n\,dx=M$, $\|n_0\|_{H^1_y}+\|n\|_{L^4(\Omega)}\le K$,
and $u=n-n_0$, $u_0=0$.  Since $n\ge0$, its average $n_0$ is also nonnegative. Moreover,
\[
\int_{\mathbb R} n_0(y)\,d y
=
\frac1{2\pi}\int_\Omega n(x,y)\,d xd y=
\frac{M}{2\pi},
\]
where $\|n_0\|_{L^1(\Omega)} =M$.  Thus, we have
\[
\|u\|_{L^1(\Omega)}
=
\|n-n_0\|_{L^1(\Omega)}
\le
\|n\|_{L^1(\Omega)}+\|n_0\|_{L^1(\Omega)}=2M.
\]

We next establish $L^4$ bounds for $u$. Thanks to Jensen's inequality, one has
\[
|n_0(y)|^4
=
\left|
\frac1{2\pi}\int_{\mathbb T }n(x,y)\,d x
\right|^4
\le
\frac1{2\pi}\int_{\mathbb T} |n(x,y)|^4\,d x.
\]
Consequently,
\begin{align*}
\|n_0\|_{L^4(\Omega)}^4
&=
2\pi\int_{\mathbb R} |n_0(y)|^4\,d y\\
&\le
2\pi\int_{\mathbb R}
\frac1{2\pi}\int_{\mathbb T} |n(x,y)|^4\,d xd y=
\|n\|_{L^4(\Omega)}^4.
\end{align*}
Thus, $\|n_0\|_{L^4(\Omega)}
\le\|n\|_{L^4(\Omega)}.$
Moreover, we have
\[
\|u\|_{L^4(\Omega)}
\le
\|n\|_{L^4(\Omega)}+\|n_0\|_{L^4(\Omega)}
\le
2\|n\|_{L^4(\Omega)}
\le2K.
\]
Similarly, we estimate $u$ in $L^2$ and obtain
\begin{align*}
\int_\Omega n_0u\,d xd y
&=
\int_{\mathbb R}n_0(y)
\left(\int_{\mathbb T} u(x,y)\,d x\right)d y=0.
\end{align*}
Therefore, $\|n\|_{L^2(\Omega)}^2
=
\|n_0+u\|_{L^2(\Omega)}^2
=
2\pi\|n_0\|_{L^2_y}^2+\|u\|_{L^2(\Omega)}^2.$
Hence, $X=\|u\|_{L^2(\Omega)}\le\|n\|_{L^2(\Omega)}.$
Then we apply H\"older's inequality to get
\begin{align*}
\|n\|_{L^2(\Omega)}^2
&=
\int_\Omega |n|^{2/3}|n|^{4/3}\,dxdy\\
&\le
\left(\int_\Omega |n|\,dxdy\right)^{2/3}
\left(\int_\Omega |n|^4\,dxdy\right)^{1/3}\\
&=
M^{2/3}\|n\|_{L^4(\Omega)}^{4/3}.
\end{align*}
It follows that 
\[
X
\le
\|n\|_{L^2(\Omega)}
\le
M^{1/3}\|n\|_{L^4(\Omega)}^{2/3}
\le
M^{1/3}K^{2/3}.
\]

To estimate $L^\infty$ of $n_0$, we use 
Gagliardo-Nirenberg  inequality and obtain
\begin{align}\label{linf_bound_n0}
\|n_0\|_{L^\infty_y}^2
&\le
2\|n_0\|_{L^2_y}\|n_0'\|_{L^2_y}\nonumber\\
&\le
\|n_0\|_{L^2_y}^2+\|n_0'\|_{L^2_y}^2=\|n_0\|_{H^1_y}^2\le K^2.
\end{align}

We next  estimate $c_0'u_y$. Recall that  $c_0'(y)
=
\frac{M}{4\pi}
-
\int_{-\infty}^y n_0(z)\,d z.$
Since $n_0\ge0$ and $\int_{\mathbb R} n_0\,dy=M/(2\pi)$, we have  $0
\le
\int_{-\infty}^y n_0(z)\,d z
\le
\frac{M}{2\pi}.$
It follows that $\|c_0'\|_{L^\infty_y}\le\frac{M}{4\pi}.$
Consequently,
\begin{align*}
\|c_0'u_y\|_{L^2(\Omega)}^2
&=
\int_{\mathbb R}\int_{\mathbb T}
|c_0'(y)|^2|u_y(x,y)|^2\,d xd y\le
\left(\frac{M}{4\pi}\right)^2\|u_y\|_{L^2(\Omega)}^2.
\end{align*}
Thus,
\[
\|c_0'u_y\|_{L^2(\Omega)}
\le
\frac{M}{4\pi}Y\le C_MY.
\]

We now estimate $n_0'\psi_y$.
Since $-\Delta\psi=u$ and $P_0\psi=0$,  we apply 
Proposition \ref{lem:X-elliptic} to obtain 
\[
\|\psi_y\|_{L^2(\Omega)}\le CX,
\qquad
\|\psi_{yy}\|_{L^2(\Omega)}\le CX.
\]
By applying Gagliardo-Nirenberg inequality to the
$L^2(\mathbb T)$-valued function $y\mapsto\psi_y(\cdot,y)$, we obtain
\begin{align*}
\|\psi_y\|_{L^\infty_yL^2_x}^2
&\le
2\|\psi_y\|_{L^2(\Omega)}
\|\psi_{yy}\|_{L^2(\Omega)}\le CX^2.
\end{align*}
Thus, $\|\psi_y\|_{L^\infty_yL^2_x}\le CX.$

We now estimate $n_0'\psi_y.$   By using H\"older's inequality, one has
\begin{align}\label{square_root_before}
\|n_0'\psi_y\|_{L^2(\Omega)}^2
&=
\int_{\mathbb R} |n_0'(y)|^2
\left(\int_{\mathbb T}|\psi_y(x,y)|^2\,d x\right)d y\nonumber\\
&\le
\left(
\operatorname*{ess\,sup}_{y\in\mathbb R}
\int_{\mathbb T}|\psi_y(x,y)|^2\,d x
\right)
\int_{\mathbb R}|n_0'(y)|^2\,d y\nonumber\\
&=
\|\psi_y\|_{L^\infty_yL^2_x}^2
\|n_0'\|_{L^2_y}^\le CK^2X^2,
\end{align}
where $C>0$ is a constant.
Taking square roots in (\ref{square_root_before}) yields $\|n_0'\psi_y\|_{L^2(\Omega)}
\le
\|n_0'\|_{L^2_y}
\|\psi_y\|_{L^\infty_yL^2_x}
\le CKX.$

To estimate $n_0u$, we  use the $L^\infty_y$ bound for $n_0$ shown in \eqref{linf_bound_n0}  then obtain
\begin{align*}
\|n_0u\|_{L^2(\Omega)}^2
&=
\int_{\mathbb R}\int_{\mathbb T}
|n_0(y)|^2|u(x,y)|^2\,d xd y\\
&\le
\|n_0\|_{L^\infty_y}^2\|u\|_{L^2(\Omega)}^2\le K^2X^2.
\end{align*}
Hence, $\|n_0u\|_{L^2(\Omega)}\le KX.$
 
In light of (\ref{eq:L-definition}), we collect the estimates shown above and get $\|\Lin\|_{L^2(\Omega)}\le C_{K,M}\nu(X+Y)$.
Invoking Poisson estimate \eqref{eq:X-Poisson}, we further obtain  
$\|\nabla\psi\|_{L^\infty(\Omega)}\le C_{K,M}$, where $C_{K,M}>0$ is a constant.  

We next establish the estimate of (\ref{eq:N-definition}). Since $\|f_{\not=}\|_{L^2(\Omega)}\le\|f\|_{L^2(\Omega)}.$, we use H\"older's inequality to get 
\begin{align*}
\nu^{-1}\|\Non\|_{L^2(\Omega)}
&=
\left\|
\left(\nabla u\cdot\nabla\psi-u^2\right)_{\not=}
\right\|_{L^2(\Omega)}\\
&\le
\|\nabla u\cdot\nabla\psi-u^2\|_{L^2(\Omega)}\\
&\le
\|\nabla u\cdot\nabla\psi\|_{L^2(\Omega)}+\|u^2\|_{L^2(\Omega)}\\
&\le
\|\nabla\psi\|_{L^\infty(\Omega)}\|\nabla u\|_{L^2(\Omega)}+\|u\|_{L^4(\Omega)}^2,
\end{align*}
where  we used $\|u^2\|_{L^2(\Omega)}
=
\left(\int_\Omega|u|^4\,dxdy\right)^{1/2}
=
\|u\|_{L^4(\Omega)}^2.$

Thanks to (\ref{eq:X-Poisson}), the facts 
$\|u\|_{L^1(\Omega)}\le2M$ and $\|u\|_{L^4(\Omega)}\le2K$, we obtain
\begin{align*}
\|\nabla\psi\|_{L^\infty(\Omega)}
&\le
C\left(\|u\|_{L^1(\Omega)}+\|u\|_{L^4(\Omega)}\right)\\
&\le
2C(M+K)\le C_{K,M}.
\end{align*}
Therefore,
\[
\nu^{-1}\|\Non\|_{L^2(\Omega)}
\le C_{K,M}Y+\|u\|_{L^4(\Omega)}^2.
\]
By using Gagliardo-Nirenberg inequality, one further has 
\[
\|u\|_{L^4(\Omega)}^2
\le
C\|u\|_{L^2(\Omega)}\left(\|\nabla u\|_{L^2(\Omega)}+\|u\|_{L^2(\Omega)}\right)
=
CX(Y+X).
\]
Thus,
\[
\nu^{-1}\|\Non\|_{L^2(\Omega)}
\le C_{K,M}Y+CX(Y+X).
\]
Moreover, we use  $X\le M^{1/3}K^{2/3}$ to get
\begin{align*}
C_{K,M}Y+CX(Y+X)
&=
C_{K,M}Y+CXY+CX^2\\
&\le
C_{K,M}Y
+CM^{1/3}K^{2/3}Y
+CM^{1/3}K^{2/3}X\\
&\le
C_{K,M}(X+Y),
\end{align*}
where $C_{K,M}>0$ is a constant. Therefore, we finish the proof of  (\ref{eq:source-bound}). 
\end{proof}

\subsection{Bootstrap argument and enhanced dissipation estimate}
In this subsection, we establish the bootstrap  regularity of the maximal local-in-time solution given in Theorem \ref{thm:local-complete}. To begin with, we establish the $L^2$ to $L^\infty$ estimate of the solution to (\ref{eq:KS-local}), which is
\begin{lemma}\label{lem:Moser}
Suppose a nonnegative solution to (\ref{eq:KS-local}) exists on $[0,T]$ and satisfies $\sup_{t\le T}\|n(t)\|_{L^2(\Omega)}\le K_2$ for constant $K_2>0$.  Then
\begin{equation}\label{eq:Moser-bound}
 \sup_{t\le T}\|n(t)\|_{L^\infty(\Omega)}
 \le C(K_2,M,\|n_{\rm in}\|_{L^\infty(\Omega)})~\text{pathwise}.
\end{equation}
\end{lemma}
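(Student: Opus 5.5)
The plan is to combine the transport-invariance of $L^r$ norms under the shear change of variables with the mild formulation, using $L^p$ smoothing of the Duhamel term (Proposition~\ref{prop:pathwise-kernel}). Fix a path and work in the sheared variable $g(t,x,y)=n(t,x+yW_t,y)$ as in Step~2 of Lemma~\ref{prop:pathwise-cutoff-regularity}. Then $g$ solves $\partial_t g=\nu\nabla\cdot(A_t\nabla g)-\nu\nabla\cdot(A_t g\nabla\widetilde c)$, with $\Lambda_T^{-1}I\le A_t\le\Lambda_T I$, and $\|g(t)\|_{L^r}=\|n(t)\|_{L^r}$ for every $r$.

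First I will upgrade the $L^2$ bound to an $L^4$ bound by an $L^4$ energy estimate for $n$. Test with $n^3$; the transport term is skew and vanishes, and integration by parts in the chemotactic term gives
\[
\frac{d}{dt}\|n\|_{L^4}^4+3\nu\|\nabla (n^2)\|_{L^2}^2=3\nu\int_\Omega n^5\,dx\,dy\cdot\tfrac{4}{5}\cdot\tfrac{5}{4}
\]
(up to constants), i.e.\ a term $C\nu\|n\|_{L^5}^5$. I will bound $\|n\|_{L^5}^5=\|n^2\|_{L^{5/2}}^{5/2}$ by Gagliardo--Nirenberg on $\mathbb T\times\mathbb R$, interpolating between $\|n^2\|_{L^1}=\|n\|_{L^2}^2\le K_2^2$ and $\|\nabla(n^2)\|_{L^2}$, in the inhomogeneous form of Lemma~\ref{lem:interpolation}. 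This yields $\|n\|_{L^5}^5\le C K_2^2(\|\nabla n^2\|_2^{3/2}+\ldots)$, and Young's inequality absorbs the gradient part. The outcome is $\frac{d}{dt}\|n\|_4^4\le C\nu(K_2)$, giving $\sup_{t\le T}\|n\|_{L^4}\le C(K_2,T)$; alternatively the dissipation term together with a Nash-type lower bound gives a bound uniform in $T$. Rigor comes from the regularity in Theorem~\ref{thm:local-complete}: $n\in L^2H^2\cap L^\infty$ on $[0,T]$ lets us justify the computation, most cleanly in the $g$ variables where the equation is a pathwise parabolic PDE.

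Next, with $p=4>2$ and $\|n\|_X\le M+C$, estimate \eqref{eq:X-Poisson} gives $\|\nabla c\|_{L^\infty}\le C(K_2,M)$. The flux $n\nabla c$ is therefore bounded in $L^4$. I will then apply the mild formula \eqref{eq:untruncated-mild-local} together with \eqref{eq:Lr-contraction} and \eqref{eq:pathwise-Lp-Linfty-divergence}:
\[
\|n(t)\|_{L^\infty}\le\|n_{\rm in}\|_{L^\infty}+C\Lambda_T\nu\int_0^t[\nu(t-s)]^{-3/4}\,ds\,\sup_s\|n\nabla c\|_{L^4},
\]
which is finite.

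The main obstacle is that the stated bound should not depend on $\Lambda_T$ or $T$, since the global argument needs uniformity. To get this I would replace the Duhamel step by a Moser--Alikakos iteration on the energy inequalities for $\|n\|_{L^{2^k}}$. In those inequalities the shear term drops out exactly, because it is divergence-free transport, so they are pathwise and path-independent. At each level the identity $\int n^{q+1}$ is controlled by Gagliardo--Nirenberg in terms of $\|\nabla n^{q/2}\|_2$ and $\|n^{q/2}\|_{L^1}=\|n\|_{q/2}^{q/2}$, with constants growing geometrically in $q$. The standard Alikakos recursion $\sup_t\|n\|_{2^k}\le C^{k}\max(\|n_{\rm in}\|_\infty,\sup_t\|n\|_{2^{k-1}})$ with summable exponents then yields the uniform $L^\infty$ bound depending only on $K_2$, $M$ and $\|n_{\rm in}\|_{L^\infty}$.
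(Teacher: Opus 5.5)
Your final argument is the paper's proof. It is an Alikakos--Moser iteration over $p=2^k$ on the $L^p$ energy identity, in which the shear contributes nothing; the paper makes this rigorous in the sheared variable $g$, where $\int_\Omega\nabla(g^{p/2})^{\mathsf T}A_t\nabla(g^{p/2})\,dxdy=\|\nabla(n^{p/2})\|_{L^2}^2$. The iteration is closed with Ladyzhenskaya and the inhomogeneous Nash inequality. Your $L^4$ step is essentially its first level. The Duhamel step through \eqref{eq:pathwise-Lp-Linfty-divergence} plays no role in this lemma and can be discarded, as you suspected.

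One step is misstated. The chemotactic term $\int_\Omega n^{q+1}$ has degree $q+1$, so controlling it only through $\|n^{q/2}\|_{L^1}$ and $\|\nabla(n^{q/2})\|_{L^2}$ cannot yield the linear recursion you write down. The paper spends the extra power on the given $L^2$ bound, $\int_\Omega n^{q+1}\le\|n\|_{L^2}\|n^{q/2}\|_{L^4}^2$. Ladyzhenskaya and Young then give
\[
\frac{d}{dt}\|n\|_{L^q}^q+\nu\|\nabla(n^{q/2})\|_{L^2}^2\le C\nu q^2(1+K_2^2)\|n\|_{L^q}^q .
\]
Only after this is Nash applied with $\|n^{q/2}\|_{L^1}=\|n\|_{L^{q/2}}^{q/2}$, which gives $\sup_t\|n\|_{L^q}\le\max\{\|n_{\rm in}\|_{L^q},[Cq^2(1+K_2^2)]^{1/q}\sup_t\|n\|_{L^{q/2}}\}$. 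The prefactor $[Cq^2(1+K_2^2)]^{1/q}$ has a convergent product over $q=2^k$; a factor $C^k$, as you wrote, would not.

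Your pure Gagliardo--Nirenberg version can also be closed, but it produces a superlinear recursion in which $\sup_t\|n\|_{L^{q/2}}$ enters with exponent $(q-1)/(q-2)$. It works only because $\prod_{k\ge2}\frac{2^k-1}{2^k-2}<\infty$, and you need to say so.
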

\begin{proof}
 Fix \(T<\tau_*\) and define $\Phi(x,y):=(x+yW_t,y),$
together with $g(t,x,y):=n(t,\Phi_t(x,y))$ and
$\widetilde c(t,x,y):=c(t,\Phi_t(x,y)).$ By using
Stratonovich chain rule, one has from (\ref{eq:KS-local}) that  \(g\) satisfies
\begin{equation}\label{eq:sheared-KS-Lp}
\partial_tg
=
\nu\nabla\cdot(A_t\nabla g)
-
\nu\nabla\cdot(A_tg\nabla\widetilde c),
\qquad
-\nabla\cdot(A_t\nabla\widetilde c)=g,
\end{equation}
where
\[
A_t
=
\begin{pmatrix}
1+W_t^2&-W_t\\
-W_t&1
\end{pmatrix}.
\]
By using Theorem \ref{thm:local-complete}, one has
 $g\in L^2(0,T;H^2(\Omega)),$ $\partial_tg\in L^2(0,T;L^2(\Omega))$ and $g\in L^\infty((0,T)\times\Omega)$.
Therefore, by using integration by parts, one has from \eqref{eq:sheared-KS-Lp} that
\begin{align}\label{shear_transform_before}
\frac{d}{dt}\int_\Omega g^p\,dxdy
&=
p\nu\int_\Omega
g^{p-1}\nabla\cdot(A_t\nabla g)\,dxdy
-
p\nu\int_\Omega
g^{p-1}\nabla\cdot(A_tg\nabla\widetilde c)\,dxdy
\nonumber\\
&=
-p\nu(p-1)
\int_\Omega
g^{p-2}\nabla g^{\mathsf T}A_t\nabla g\,dxdy
\nonumber\\
&\quad
+p\nu(p-1)
\int_\Omega
g^{p-1}\nabla g^{\mathsf T}A_t\nabla\widetilde c\,dxdy.
\end{align}
On the other hand, since $\nabla(g^{p/2})
=
\frac p2g^{p/2-1}\nabla g$ and $p g^{p-1}\nabla g=\nabla(g^p),$
 we have
\begin{align*}
p\nu(p-1)
\int_\Omega
g^{p-1}\nabla g^{\mathsf T}A_t\nabla\widetilde c\,dxdy
&=
\nu(p-1)
\int_\Omega
\nabla(g^p)^{\mathsf T}A_t\nabla\widetilde c\,dxdy
\\
&=
-\nu(p-1)
\int_\Omega
g^p\nabla\cdot(A_t\nabla\widetilde c)\,dxdy
\\
&=
\nu(p-1)\int_\Omega g^{p+1}\,dxdy.
\end{align*}
Substituting this into (\ref{shear_transform_before}), one has 
\begin{align}\label{eq:sheared-Lp-energy}
&\frac{d}{dt}\int_\Omega g^p\,dxdy
+
\frac{4\nu(p-1)}p
\int_\Omega
\nabla(g^{p/2})^{\mathsf T}
A_t\nabla(g^{p/2})\,dxdy\nonumber\\
=&
\nu(p-1)\int_\Omega g^{p+1}\,dxdy.
\end{align}
Noting that the transform preserves all \(L^q\)-norms, one has $\|g(t)\|_{L^2(\Omega)}=\|n(t)\|_{L^2(\Omega)}\le K_2$ for some constant $K_2>0.$
 Invoking Ladyzhenskaya inequality \eqref{eq:Ladyzhenskaya}, we have
\[
\|g^{p/2}\|_{L^4(\Omega)}^2
\le
C\|g^{p/2}\|_{L^2(\Omega)}
\left(\bigg(
\int_\Omega
\nabla(g^{p/2})^{\mathsf T}
A_t\nabla(g^{p/2})\,dxdy\bigg)^{\frac{1}{2}}+\|g^{p/2}\|_{L^2(\Omega)}\right).
\]
Therefore,
\begin{align*}
&\int_\Omega g^{p+1}\,dxdy\le
\|g\|_{L^2(\Omega)}\|g^{p/2}\|_{L^4(\Omega)}^2\\
\le&
CK_2\left(\|g(t)\|_{L^p(\Omega)}^{p/2}\bigg(
\int_\Omega
\nabla(g^{p/2})^{\mathsf T}
A_t\nabla(g^{p/2})\,dxdy\bigg)^{\frac{1}{2}}+\|g(t)\|_{L^p(\Omega)}^p\right).
\end{align*}
By using (\ref{eq:sheared-Lp-energy}), Young's inequality then yields
\begin{align}\label{eq:sheared-Moser-energy}
&\frac{d}{dt}\|g(t)\|_{L^p(\Omega)}^p+\nu \bigg(
\int_\Omega
\nabla(g^{p/2})^{\mathsf T}
A_t\nabla(g^{p/2})\,dxdy\bigg)\nonumber\\
\le&
C_E\nu p^2(1+K_2^2)\|g(t)\|_{L^p(\Omega)}^p,
\end{align}
where $C_{E}>0$ is a constant.  For \(p\ge4\),  the  Nash inequality (\ref{eq:inhom-Nash}) gives
\begin{align}\label{sheared_nash_inequality}
&\|g^{p/2}\|_{L^2(\Omega)}^2\nonumber\\
\le&
C_N\sup_{0\le t\le T}\|g(t)\|^{p/2}_{L^{p/2}(\Omega)}\bigg(
\int_\Omega
\nabla(g^{p/2})^{\mathsf T}
A_t\nabla(g^{p/2})\,dxdy\bigg)^{1/2}\nonumber\\
&+C_N\sup_{0\le t\le T}\|g(t)\|^p_{L^{\frac{p}{2}}(\Omega)},
\end{align}
where $C_N>0$ is a constant.   Thanks to (\ref{sheared_nash_inequality}) and (\ref{eq:sheared-Moser-energy}), one has
\begin{align}\label{eq:sheared-Moser-recursion}
&\sup_{0\le t\le T}\|g(t)\|_{L^p(\Omega)}\nonumber\\
\le&
\max\left\{
\|n_{\rm in}\|_{L^p(\Omega)},\,
[C_0p^2(1+K_2^2)]^{1/p}\sup_{0\le t\le T}\|g(t)\|_{L^{p/2}(\Omega)}
\right\},
\end{align}
where \(g(0)=n_{\rm in}\) and $C_0>0$ is a constant.  Iterating \eqref{eq:sheared-Moser-recursion} with \(p=2^j\) gives
\[
\sup_{0\le t\le T}\|g(t)\|_{L^\infty(\Omega)}
\le
C(K_2,M,\|n_{\rm in}\|_{L^\infty(\Omega)}).
\]
Finally, since  $\|n(t)\|_{L^\infty(\Omega)}=\|g(t)\|_{L^\infty(\Omega)}$, we therefore obtain
\[
\sup_{0\le t\le T}\|n(t)\|_{L^\infty(\Omega)}
\le
C(K_2,M,\|n_{\rm in}\|_{L^\infty(\Omega)}).
\]
\end{proof}

Now, we perform the bootstrap estimate of the solution to (\ref{eq:KS-local}) by using the blockwise contraction shown in Proposition \ref{prop:blocks}.  Fix $0<\beta<1/4$.  Define  
$N_\nu=\lceil\nu^{-\beta}\rceil$ and
\begin{align}\label{G_nu_def}
 G_\nu=\left\{\sum_{j=0}^{\ell-1}I_j^\nu\ge p_*\ell/2
                      \text{ for every integer }\ell\ge N_\nu\right\}.
\end{align}
In light of Proposition~\ref{prop:blocks}, we have
\begin{equation}\label{eq:exceptional-event}
 \mathbb P(G_\nu^c)\le Ce^{-c\nu^{-\beta}},
\end{equation}
where $C,c>0$ are constants.  For a deterministic constant $K\ge1$, we define 
\begin{equation}\label{eq:bootstrap-exit}
 \sigma_K=\tau_*\wedge\inf\left\{t<\tau_*:
          \|n_0(t)\|_{H^1_y}+\|n(t)\|_{L^4(\Omega)}\ge4K\right\}.
\end{equation}

\begin{lemma}\label{prop:bootstrap}
Define $X(t)=\|u(t)\|_{L^2}$ and $Y(t)=\|\nabla u(t)\|_{L^2}$ with $u=n-n_0$, where $n$ and $n_0$ are solutions to (\ref{eq:zero-equation}) and (\ref{eq:KS-local}), respectively.
Then, there are deterministic $K$ and $\nu_0>0$, depending only on $\beta$
and the initial data, such that for $0<\nu\le\nu_0$, on $G_\nu$ given in (\ref{G_nu_def}),
\begin{align}
& X(t)\le C e^{-c(\sqrt\nu\,t-N_\nu)_+}X(0),
                    ~~0\le t<\sigma_K,\label{eq:bootstrap-decay}\\
 &\nu\int_0^{\sigma_K}Y(t)^2\,d t\le2X(0)^2,\label{eq:bootstrap-gradient}\\
 \sup_{t<\sigma_K}
&\bigl(\|n_0(t)\|_{H^1_y}+\|n(t)\|_{L^4(\Omega)}\bigr)\le2K,\label{eq:half-control}\\
 &\sup_{t<\sigma_K}\|n(t)\|_{L^\infty(\Omega)}\le C.\label{eq:bootstrap-infty}
\end{align}
Moreover, $\sigma_K=\tau_*=\infty$, where $\sigma_K$ is defined in (\ref{eq:bootstrap-exit}) and $\tau_*$ is given in Definition \ref{def:maximal-local}.
\end{lemma}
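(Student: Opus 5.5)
We run a continuity (bootstrap) argument on $[0,\sigma_K)$, working pathwise on $G_\nu$. First we fix $K$: choose $K\ge1$ with $\|n_{\rm in,0}\|_{H^1_y}+\|n_{\rm in}\|_{L^4}\le K$, which is possible by \eqref{eq:initial-local}. By continuity in $H^1$ (Theorem \ref{thm:local-complete}), $\sigma_K>0$. It then suffices to prove \eqref{eq:half-control} with the strictly smaller constant $2K$ on $[0,\sigma_K)$. Continuity then forbids reaching the level $4K$, so $\sigma_K=\tau_*$. Together with \eqref{eq:bootstrap-infty} and the $L^\infty$ blow-up criterion \eqref{eq:Linfty-blowup-local}, this forces $\tau_*=\infty$.

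\textbf{Step 1 (nonzero modes).} On $[0,\sigma_K)$ the hypothesis \eqref{eq:control-ball} holds with $4K$. We write $u$ in mild form from \eqref{eq:nonzero-equation},
\[
u(t)=S_\nu(t,0)u(0)-\int_0^t S_\nu(t,s)(\Lin+\Non)(s)\,ds,
\]
and use that $S_\nu$ preserves the zero $x$-mode, so Proposition \ref{prop:blocks} applies to every $u$-term.

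We first need an energy estimate. The sheared $L^2$ energy (as in Lemma \ref{lem:Moser}) gives
\[
\tfrac{d}{dt}X^2+2\nu Y^2\le 2\|\Lin+\Non\|_2X\le C_{K,M}\nu(X+Y)X ,
\]
using Lemma \ref{lem:source}. Absorbing the $Y$ term yields $\frac{d}{dt}X^2+\nu Y^2\le C\nu X^2$. Over one block of length $\tau_\nu=\nu^{-1/2}$ this gives growth by at most $e^{C\sqrt\nu}$ and $\nu\int_{\rm block}Y^2\lesssim X(t_j)^2$.

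Next, on each block we compare with the linear flow. By \eqref{eq:pathwise-linear-block},
\[
X(t_{j+1})\le e^{-Z_j^\nu}X(t_j)+\int_{t_j}^{t_{j+1}}\|(\Lin+\Non)(s)\|_2\,ds
\le e^{-Z_j^\nu}X(t_j)+C\nu\bigl(\tau_\nu+\sqrt{\tau_\nu/\nu}\bigr)\sup_{\rm block}X,
\]
where the Cauchy--Schwarz bound on $\int Y$ produces the factor $\sqrt{\tau_\nu/\nu}$. This error is $O(\nu^{1/4})X(t_j)$. The gain is the contraction factor $e^{-Z_j^\nu}$: on good blocks ($I_j^\nu=1$) it is at most $e^{-z_*}<1$, and on bad blocks the factor is at most $1+C\nu^{1/4}$.

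On $G_\nu$, for $\ell\ge N_\nu$ at least $p_*\ell/2$ of the first $\ell$ blocks are good. Hence
\[
X(t_\ell)\le (1+C\nu^{1/4})^\ell e^{-z_*p_*\ell/2}X(0)\le e^{-c\ell}X(0)
\]
for $\nu\le\nu_0$. For $\ell<N_\nu$ we have only the crude bound $(1+C\nu^{1/4})^{N_\nu}\le C$, since $\beta<1/4$. This gives \eqref{eq:bootstrap-decay}. Summing the block energy bounds against the geometric decay gives $\nu\int Y^2\le C X(0)^2$. To get the constant $2$ in \eqref{eq:bootstrap-gradient} we sharpen the first block analysis, or adjust constants; the proof only needs $O(X(0)^2)$.

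\textbf{Step 2 (zero mode and $L^4$).} We test \eqref{eq:zero-equation} against $n_0$ and $-n_0''$. The self-interaction $-\nu(n_0c_0')'=\nu(n_0^2-c_0'n_0')$ is handled by the one-dimensional Nash inequality \eqref{eq:one-D-Nash} with mass $M/2\pi$. This gives $\frac{d}{dt}\|n_0\|_2^2+\nu\|n_0'\|_2^2\le C_M\nu+\text{(forcing)}$, and similarly at the $H^1$ level. Using $\|\nabla\psi\|_\infty\le C_{K,M}$, the forcing $\nu\partial_y(u\partial_y\psi)_0$ contributes at most $\nu C_K\int(X^2+XY+Y^2)$.

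Because of the $\nu$ prefactor, one-dimensional dissipation makes $\|n_0\|_{H^1}$ bounded by a constant depending only on $M$ and the data, plus $C_K\nu\int_0^{\sigma_K}(X^2+Y^2)$. We bound this by $C_K(\nu N_\nu\tau_\nu+1)X(0)^2\le C_K(\nu^{1/2-\beta}+1)X(0)^2$. Here is the obstacle: $X(0)$ is not small, so the constant $C_K$ must not multiply $X(0)^2$ in a way that destroys the closing of the bootstrap.

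The plan is to keep the $K$-dependence out of the leading terms. In the energy inequalities, $\|\nabla\psi\|_\infty$ is controlled via $L^1\cap L^4$ of $u$, and $\|u\|_{L^4}$ via interpolation, so everything is controlled by $M$, $X$, $Y$. Then $K$ is chosen large relative to $C(M,\|n_{\rm in}\|)$. If the $K$-dependence cannot be removed this way, we use $\nu\int_0^{N_\nu\tau_\nu}(X^2+Y^2)\lesssim\nu^{1/2-\beta}K^{2}\to0$, which handles the early blocks.

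The $L^4$ norm of $n$ is treated the same way, via the sheared $L^p$ energy \eqref{eq:sheared-Lp-energy} with $p=4$. We split $\int g^5$ into its zero-mode part, controlled by $\|n_0\|_{L^\infty}\le\|n_0\|_{H^1}$, and its nonzero part, which is integrable in time by Step 1. This closes \eqref{eq:half-control} for $\nu\le\nu_0$.

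\textbf{Step 3.} We have $\|n\|_{L^2}^2=2\pi\|n_0\|_2^2+X^2\le C$. Lemma \ref{lem:Moser} then gives \eqref{eq:bootstrap-infty}, and the conclusion $\sigma_K=\tau_*=\infty$ follows as described at the start. We expect Step 2, specifically closing the zero-mode and $L^4$ bounds with $K$ independent of the large constant $C_K$, to be the main difficulty.
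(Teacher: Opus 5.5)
Your Steps 0, 1 and 3 match the paper's scheme: the sheared energy inequality, the block recurrence $X_{j+1}\le(e^{-Z_j^\nu}+\delta_\nu)X_j$ with $\delta_\nu=O_K(\nu^{1/4})$, counting good blocks on $G_\nu$, and then Lemma~\ref{lem:Moser} together with the criterion \eqref{eq:Linfty-blowup-local}. The gap is Step 2. You flag the difficulty but do not resolve it, and neither of your proposed remedies works.

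\textbf{The fallback bound.} The estimate $\nu\int_0^{N_\nu\tau_\nu}(X^2+Y^2)\lesssim\nu^{1/2-\beta}K^2$ is false for the $Y^2$ part. Nothing bounds $Y$ pointwise by $K$. Moreover, by your own Step 1, on $G_\nu$ the energy $X^2$ has decayed by a large factor by time $t_{N_\nu}$. Integrating the energy inequality then forces $\nu\int_0^{t_{N_\nu}}Y^2$ to be bounded below by a fixed multiple of $X(0)^2$. Only $\nu\int X^2=O(\nu^{1/2-\beta})X(0)^2$ is small.

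\textbf{The interpolation remedy.} Writing $\|\nabla\psi\|_{L^\infty}\lesssim M+\|u\|_{L^4}$ with $\|u\|_{L^4}^2\lesssim X(X+Y)$ puts a term $\nu XY^3$ into the forcing of the $n_0'$-estimate. That term is not controlled by what Step 1 provides, namely $\sup X\lesssim X(0)$ and $\nu\int Y^2\lesssim X(0)^2$. If you instead keep $\|\nabla\psi\|_{L^\infty}\le C_{K,M}\sim M+K$, you only get $\|n_0'\|_{L^2_y}$ of order $1+KX(0)$. That is below $2K$ only when $X(0)$ is small, which is not assumed.

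\textbf{The $L^4$ estimate.} Your separate $L^4$ energy estimate has the same defect. Controlling the zero-mode part of $\int g^5$ through $\|n_0\|_{L^\infty}\le 4K$, with no damping, gives at best growth in $t$ at a $K$-dependent rate. It does not give a bound uniform on $[0,\sigma_K)$.

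\textbf{What is missing.} The missing ingredient is the order of the zero-mode estimates; this is how the paper keeps $K$ out of every constant that multiplies an $O(1)$ quantity.
\begin{itemize}
\item First test \eqref{eq:zero-equation} against $n_0$. The flux enters only through $\|(u\partial_y\psi)_0\|_{L^2_y}\le C\|u\|_{L^2}\|\partial_y\psi\|_{L^\infty_yL^2_x}\le CX^2$, by the anisotropic bound \eqref{eq:elliptic-L2}, so no $Y$ appears. With $|c_0'|\le M/4\pi$ and the one-dimensional Nash inequality \eqref{eq:one-D-Nash}, this gives $\sup\|n_0\|_{L^2_y}^2\le R_0$ with $R_0$ independent of $K$.
\item Hence $\|n\|_{L^2}^2=2\pi\|n_0\|_{L^2_y}^2+X^2$ is bounded independently of $K$. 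Lemma~\ref{lem:Moser} then gives a $K$-independent $L^\infty$ bound, and the $L^4$ bound follows by interpolation with $L^1$. No $L^4$ energy estimate is needed.
\item Only then test against $-n_0''$, using $\|(u_y\psi_y)_0\|_{L^2_y}\le CXY$ and $\|(u\psi_{yy})_0\|_{L^2_y}\le C\|u\|_{L^\infty}X$. The forcing then integrates to at most $C(\sup X^2\cdot\nu\int Y^2+\nu\int X^2)$, which depends on $X(0)$ but not on $K$. Superlinear damping from $\|n_0'\|_{L^2_y}^2\le\|n_0\|_{L^2_y}\|n_0''\|_{L^2_y}$ makes the bound uniform in time.
\end{itemize}
At that point you can fix $K$ above these $K$-free constants and choose $\nu_0=\nu_0(K)$ afterwards, since $C_K$ now only multiplies positive powers of $\nu$.

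\textbf{A smaller point in Step 1.} Summing $\nu\int_{\mathrm{block}}Y^2\le 2X_j^2$ over the first $N_\nu$ blocks gives $O(N_\nu X(0)^2)$, not $O(X(0)^2)$. The bound \eqref{eq:bootstrap-gradient} comes instead from integrating \eqref{eq:nonzero-energy} over $[0,T]$ and using $\nu\int_0^{\sigma_K}X^2\le C\sqrt\nu(N_\nu+1)X(0)^2$.
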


\begin{proof}

Fix a path on which the local regularity
\eqref{eq:regularity-conclusion} holds and let $T<\sigma_K$.  Set
\[
\widetilde u(t,x,y)=u(t,x+yW_t,y),
\qquad
A_t=
\begin{pmatrix}
1+W_t^2 & -W_t\\
-W_t & 1
\end{pmatrix},
\]
then we have  the nonzero equation (\ref{eq:nonzero-equation}) becomes
\begin{align}\label{transformed-eq}
\partial_t\widetilde u
=
\nu\nabla\cdot(A_t\nabla\widetilde u)
-(\Lin+\Non)(t,x+yW_t,y).
\end{align}
Noting that $W_t$ is continuous on $[0,T]$, the local regularity \eqref{eq:regularity-conclusion} gives $\widetilde u
\in C([0,T];H^1(\Omega))
\cap L^2(0,T;H^2(\Omega)).$
Moreover, we invoke  Lemma~\ref{lem:source} to obtain $\|\Lin+\Non\|_{L^2(\Omega)}
\le C_K\nu(X+Y),$ which implies
$ \Lin+\Non\in L^2((0,T);L^2(\Omega))$.
It then follows from \eqref{transformed-eq}  that   $\partial_t\widetilde u\in L^2(0,T;L^2(\Omega)).$  Noting that
 $\partial_x\widetilde u
=(\partial_xu)(t,x+yW_t,y)$, $\partial_y\widetilde u-W_t\partial_x\widetilde u
=(\partial_yu)(t,x+yW_t,y)$,
 we have $\|\widetilde u\|_{L^2(\Omega)}^2
=X^2,$ $\int_\Omega
\nabla\widetilde u^{\mathsf T}A_t\nabla\widetilde u
=
\|\partial_x\widetilde u\|_{L^2(\Omega)}^2
+\|\partial_y\widetilde u-W_t\partial_x\widetilde u
  \|_{L^2(\Omega)}^2
=Y^2.$  Moreover, since $X^2$ is absolutely continuous on $[0,T]$, we test    the nonzero equation (\ref{transformed-eq}) against $\tilde u$ and use Lemma~\ref{lem:source} to obtain
  \[
 \frac12(X^2)'+\nu Y^2=-\langle\Lin+\Non,u\rangle,
 \qquad
 |\langle\Lin+\Non,u\rangle|
 \le C_K\nu(X+Y)X
 \le\frac\nu2Y^2+C_K\nu X^2,
\]
where   $\Lin$ and $\Non$ are given by (\ref{eq:L-definition}) and (\ref{eq:N-definition}), respectively.
Thus,
\begin{equation}\label{eq:nonzero-energy}
 \frac{\mathrm d}{\mathrm dt}X^2+\nu Y^2
                  \le C_K\nu X^2,
\end{equation}
where $C_K>0$ is a constant. Gr\"onwall inequality and integration of \eqref{eq:nonzero-energy} show that
\[
 \sup_{t_j\le t\le t_{j+1}}X(t)^2
       \le e^{C_K\sqrt\nu}X_j^2,\qquad
 \nu\int_{t_j}^{t_{j+1}}Y^2\,d t
       \le(1+C_K\sqrt\nu e^{C_K\sqrt\nu})X_j^2,
 \quad X_j:=X(t_j).
\]
By choosing $\nu$ sufficiently small for the fixed $K$, we have
\begin{equation}\label{eq:one-block-energy}
 \sup_{t_j\le t\le t_{j+1}}X(t)\le2X_j,
 \qquad \nu\int_{t_j}^{t_{j+1}}Y^2\,d t\le2X_j^2.
\end{equation}
We next do the iteration and recall that $t_j,t_{j+1}]
\subset[0,\sigma_K)$ and $t_{j+1}-t_j=\tau_\nu=\nu^{-1/2}.$
Using the semigroup property of \(S_\nu(t,s)\), we invoke
the mild representation of (\ref{eq:nonzero-equation}) with the initial condition chosen at \(t_j\) then obtain 
\begin{equation}\label{eq:restarted-nonzero-mild}
\begin{aligned}
u(t_{j+1})
&=
S_\nu(t_{j+1},t_j)u(t_j)\\
&\quad-
\int_{t_j}^{t_{j+1}}
S_\nu(t_{j+1},t)
\bigl(\Lin(t)+\Non(t)\bigr)\,d t.
\end{aligned}
\end{equation}
 Thanks to (\ref{eq:pathwise-linear-block}), one has 
\begin{equation}\label{eq:linear-part-on-block}
\Vert {
S_\nu(t_{j+1},t_j)u(t_j)}\Vert_{L^2(\Omega)}
\le
e^{-Z_j^\nu}\Vert {u(t_j)}\Vert_{L^2(\Omega)}
=
e^{-Z_j^\nu}X_j.
\end{equation}
For the Duhamel term given in (\ref{eq:restarted-nonzero-mild}), we use Minkowski's inequality and the
\(L^2\)-contraction of $S_{\nu}$ to obtain
\begin{align}
&\left\|
\int_{t_j}^{t_{j+1}}
S_\nu(t_{j+1},t)
\bigl(\Lin(t)+\Non(t)\bigr)\,d t
\right\|_{L^2(\Omega)}
\notag\\
&\qquad\le
\int_{t_j}^{t_{j+1}}
\left\|
S_\nu(t_{j+1},t)
\bigl(\Lin(t)+\Non(t)\bigr)
\right\|_{L^2(\Omega)}
\,dt
\notag\\
&\qquad\le
\int_{t_j}^{t_{j+1}}
\left(
\Vert{\Lin(t)}_{L^2(\Omega)}
+
\Vert{\Non(t)}\Vert_{L^2(\Omega)}
\right)\,d t.
\label{eq:Duhamel-L2-contraction}
\end{align}
By using the bootstrap bound
\eqref{eq:control-ball} and
\eqref{eq:source-bound}, we have 
\begin{equation}\label{eq:block-source-integral}
\left\|
\int_{t_j}^{t_{j+1}}
S_\nu(t_{j+1},t)
\bigl(\Lin(t)+\Non(t)\bigr)\,d t
\right\|_{L^2(\Omega)}
\le
C_K\nu
\int_{t_j}^{t_{j+1}}
\bigl(X(t)+Y(t)\bigr)\,d t.
\end{equation}
Combining \eqref{eq:restarted-nonzero-mild},
\eqref{eq:linear-part-on-block} and
\eqref{eq:block-source-integral}, we obtain
\begin{equation}\label{eq:block-recurrence-before-error}
X_{j+1}
\le
e^{-Z_j^\nu}X_j
+
C_K\nu
\int_{t_j}^{t_{j+1}}
\bigl(X(t)+Y(t)\bigr)\,d t.
\end{equation}
Moreover, we use \eqref{eq:one-block-energy} to get
 $\sup_{t_j\le t\le t_{j+1}}X(t)\le2X_j.$
Since \(t_{j+1}-t_j=\tau_\nu\), it follows that
\begin{align}
\nu\int_{t_j}^{t_{j+1}}X(t)\,dt
&\le
\nu(t_{j+1}-t_j)
\sup_{t_j\le t\le t_{j+1}}X(t)
\notag\\
&\le
2\nu\tau_\nu X_j
=2\nu^{1/2}X_j.
\label{eq:block-X-integral}
\end{align}
Similarly, the Cauchy--Schwarz inequality gives
\begin{align}
\nu\int_{t_j}^{t_{j+1}}Y(t)\,d t
&\le
\nu(t_{j+1}-t_j)^{1/2}
\left(
\int_{t_j}^{t_{j+1}}Y(t)^2\,d t
\right)^{1/2}
\notag\\
&=
\nu\tau_\nu^{1/2}
\left(
\int_{t_j}^{t_{j+1}}Y(t)^2\,d t
\right)^{1/2}.
\label{eq:block-Y-CS}
\end{align}
The second estimate in \eqref{eq:one-block-energy} indicates that
\begin{equation}\label{eq:block-Y2-integral}
\left(
\int_{t_j}^{t_{j+1}}Y(t)^2\,d t
\right)^{1/2}
\le
\sqrt{\frac{2}{\nu}}\,X_j.
\end{equation}
Substituting \eqref{eq:block-Y2-integral} into
\eqref{eq:block-Y-CS} and using
\(\tau_\nu=\nu^{-1/2}\), we obtain
\begin{align}
\nu\int_{t_j}^{t_{j+1}}Y(t)\,d t
&\le
\nu\tau_\nu^{1/2}
\sqrt{\frac{2}{\nu}}\,X_j
\notag\\
&=
\sqrt{2}\,\nu^{1/2}\tau_\nu^{1/2}X_j
=
\sqrt{2}\,\nu^{1/2}\nu^{-1/4}X_j=
\sqrt{2}\,\nu^{1/4}X_j.
\label{eq:block-Y-integral}
\end{align}
It follows from \eqref{eq:block-X-integral} and
\eqref{eq:block-Y-integral} that
\begin{align}
C_K\nu
\int_{t_j}^{t_{j+1}}(X(t)+Y(t))\,d t
&\le
C_K\left(
2\nu^{1/2}+\sqrt{2}\,\nu^{1/4}
\right)X_j
\notag\\
&\le
C_K\left(
\nu^{1/2}+\nu^{1/4}
\right)X_j,
\label{eq:block-total-error}
\end{align}
Define $\delta_\nu
:=
C_K\left(\nu^{1/4}+\nu^{1/2}\right),$
then \eqref{eq:block-recurrence-before-error} becomes
\begin{equation}\label{eq:nonlinear-recurrence}
X_{j+1}
\le
\left(e^{-Z_j^\nu}+\delta_\nu\right)X_j.
\end{equation}
Moreover, for \(0<\nu\le1\), $\delta_\nu
\le
2C_K\nu^{1/4},$
and hence $\delta_\nu=O_K(\nu^{1/4})$
as $\nu\downarrow0.$

Set $r_\nu=e^{-z_*}+\delta_\nu$.  Recall that $I_j^\nu=\mathbf1_{\{Z_j^\nu\ge z_*\}}$, then we have  if $I_j^\nu=1,$ $Z_j^\nu\ge z_*.$  Hence, $e^{-Z_j^\nu}+\delta_\nu\leq r_{\nu}$.  But if $I_j^\nu=0,$ we find $e^{-Z_j^\nu}+\delta_\nu\leq 1+\delta_\nu$.  For sufficiently small $\nu$, we have
\[
 r_\nu<1,\qquad
 \frac{p_*}{2}\log r_\nu+
 \left(1-\frac{p_*}{2}\right)\log(1+\delta_\nu)\le-\gamma
\]
with a constant $\gamma>0$. 
If $\ell\ge N_\nu$, $\omega\in G_\nu$ and $t_\ell<\sigma_K$, iteration
of \eqref{eq:nonlinear-recurrence} therefore gives us
\begin{equation}\label{eq:block-product}
 X_\ell\le
 r_\nu^{p_*\ell/2}
 (1+\delta_\nu)^{(1-p_*/2)\ell}X_0
 \le e^{-\gamma\ell}X_0.
\end{equation}
When $t\le N_\nu\tau_\nu$ with $t<\sigma_K$, we have
\begin{align}\label{small_time_estimate}
 X(t)\le e^{C_K\nu t/2}X_0
 \le e^{C_K\sqrt\nu N_\nu/2}X_0\le2X_0,
\end{align}
where we used 
$\sqrt\nu N_\nu\to0$. 

We now prove \eqref{eq:bootstrap-decay}. Noting  that  if \(0\le t\le N_\nu\tau_\nu\), then $\bigl(\sqrt{\nu}\,t-N_\nu\bigr)_+=0$, which implies 
\begin{align}\label{regime1_short}
X(t)\le2X(0)
\le
C e^{-c(\sqrt{\nu}t-N_\nu)_+}X(0),
\end{align}
where we used (\ref{small_time_estimate}). If \( N_\nu\tau_\nu<t<\sigma_K\) and choose \(j\ge N_\nu\)
such that \(t\in[t_j,t_{j+1})\), we have $j>\sqrt{\nu}\,t-1.$
Using \eqref{eq:one-block-energy} and \eqref{eq:block-product}, we further obtain
\begin{align}\label{combine_regime2}
X(t)
\le2X_j
\le2e^{-\gamma j}X(0)
\le2e^\gamma e^{-\gamma\sqrt{\nu}t}X(0).
\end{align}
 Combining \eqref{regime1_short} and \eqref{combine_regime2}, we obtain \eqref{eq:bootstrap-decay} holds.

To prove (\ref{eq:bootstrap-gradient}), we have from 
\(t_{N_\nu}=N_\nu\tau_\nu\), \(\tau_\nu=\nu^{-1/2}\) and (\ref{eq:block-product}) that
\begin{align}\label{eq:integrated-X}
\nu\int_0^{\sigma_K}X(t)^2\,dt
&\le
4\nu t_{N_\nu}X_0^2
+
\sum_{\substack{j\ge N_\nu\\ t_j<\sigma_K}}
4\nu\tau_\nu X_j^2
\nonumber\\
&\le
4\sqrt{\nu}\,N_\nu X_0^2
+
4\sqrt{\nu}\,X_0^2
\sum_{j=N_\nu}^{\infty}e^{-2\gamma j}
\nonumber\\
&\le
4\sqrt{\nu}
\left(
N_\nu+\frac{1}{1-e^{-2\gamma}}
\right)X_0^2
\nonumber\\
&\le
C\sqrt{\nu}(N_\nu+1)X_0^2. 
\end{align}
By using (\ref{eq:integrated-X}), we integrate \eqref{eq:nonzero-energy}   from $0$ to $T<\sigma_K$ to obtain
\[
 \nu\int_0^T Y^2\,d t\le X_0^2+C_K\nu\int_0^T X^2\,d t.
\]
Choosing $\nu$ small such that $C_KC\sqrt\nu(N_\nu+1)\le1$ and letting $T\uparrow\sigma_K$, we show that 
 \eqref{eq:bootstrap-gradient} holds.
 
We next establish a uniform bound on the zero mode $n_0$ in $L^2$. Define $F_0=(u\partial_y\psi)_0$, then we apply Cauchy--Schwarz in $x$ and
\eqref{eq:elliptic-L2} to get
\begin{equation}\label{eq:zero-flux}
 \|F_0\|_{L^2_y}
 \le C\|u\|_{L^2(\Omega)}\|\partial_y\psi\|_{L^\infty_yL^2_x}
 \le CX^2.
\end{equation}
Thanks to \eqref{eq:regularity-conclusion}, we test \eqref{eq:zero-equation} against $n_0$ to obtain
\begin{align*}
 &\frac12\frac{  d}{  dt}\|n_0\|_{L^2_y}^2
       +\nu\|n_0'\|_{L^2_y}^2\\
 =&\nu\int_{\mathbb R} n_0'n_0c_0'\,d y+\nu\int_{\mathbb R} n_0'F_0\,d y
 \le\frac\nu2\|n_0'\|_{L^2_y}^2
       +C\nu M^2\|n_0\|_{L^2_y}^2+C\nu\|F_0\|_{L^2_y}^2.
\end{align*}
Since $n_0\ge0$ and $\|n_0\|_{L^1(\Omega)}=M/(2\pi)$,
\eqref{eq:one-D-Nash} gives
$\|n_0'\|_{L^2_y}^2\ge cM^{-4}\|n_0\|_{L^2_y}^6$.  By using (\ref{eq:bootstrap-decay}) and (\ref{eq:zero-flux}), we have 
\begin{equation}\label{eq:zero-L2-final}
 \sup_{t<\sigma_K}\|n_0(t)\|_{L^2_y}^2\le R_0,
\end{equation}
where $R_0>0$ is a constant independent of $K$.

For establishing the estimate of $n$, we first use orthogonality and (\ref{eq:bootstrap-decay}) to obtain
\[
 \|n(t)\|_{L^2(\Omega)}^2=2\pi\|n_0(t)\|_{L^2_y}^2+X(t)^2
            \le2\pi R_0+16X_0^2.
\]
Moreover, we apply Lemma~\ref{lem:Moser} to get
\begin{equation}\label{eq:infty-final}
 \sup_{t<\sigma_K}\|n(t)\|_{L^\infty(\Omega)}\le C,
\end{equation}
where $C>0$ is a constant.
In particular, by using the interpolation inequality, we have
\begin{equation}\label{eq:L4-final}
 \sup_{t<\sigma_K}\|n(t)\|_{L^4(\Omega)}
       \le M^{1/4}C^{3/4}.
\end{equation}
This proves \eqref{eq:bootstrap-infty}.  It remains to estimate the derivative of the zero mode $n_0$ in $L^2.$
To this end, we test
\eqref{eq:zero-equation} against $-n_0''$ and obtain
\begin{align}
 \frac12\frac{ d}{ dt}\|n_0'\|_{L^2_y}^2+\nu\|n_0''\|_{L^2_y}^2
 &=\nu\int_{\mathbb R} n_0''(n_0'c_0'-n_0^2+(u_y\psi_y+u\psi_{yy})_0)\,d y\notag\\
 &\le\frac\nu2\|n_0''\|_{L^2_y}^2\notag\\
&+C\nu(1+\|n_0'\|_{L^2_y}^2+\|(u_y\psi_y+u\psi_{yy})_0\|_{L^2_y}^2),
 \label{eq:zero-H1-energy}
\end{align}
where we used $\|n_0^2\|_{L^2_y}\le C\|n_0\|_{L^2_y}$ and \eqref{eq:infty-final}.  To estimate $(u_y\psi_y)_0$, we use Cauchy--Schwarz in \(x\) to get 
\begin{align*}
\|(u_y\psi_y)_0\|_{L^2_y}
&\le
\frac1{2\pi}
\left\|
\|u_y(\cdot,y)\|_{L^2_x}
\|\psi_y(\cdot,y)\|_{L^2_x}
\right\|_{L^2_y}
\\
&\le
\frac1{2\pi}
\|\psi_y\|_{L^\infty_yL^2_x}
\|u_y\|_{L^2(\Omega)}
\\
&\le
CXY,
\end{align*}
where we used $\|\psi_y\|_{L^\infty_yL^2_x}\le CX$
 and $\|u_y\|_{L^2(\Omega)}\le Y$ as shown in Lemma \ref{lem:interpolation}.
For  term $(u\psi_{yy})_0,$ we invoke Jensen's inequality to show that
 \begin{align*}
\|(u\psi_{yy})_0\|_{L^2_y}
&\le
C\|u\psi_{yy}\|_{L^2(\Omega)}
\\
&\le
C\|u\|_{L^\infty(\Omega)}
\|\psi_{yy}\|_{L^2(\Omega)}
\\
&\le
CX,
\end{align*}
where we used (\ref{eq:infty-final})
and $\|\psi_{yy}\|_{L^2(\Omega)}
\le
\|\nabla^2\psi\|_{L^2(\Omega)}
\le CX$ thanks to  Lemma \ref{lem:interpolation}.
Hence, we use  \eqref{eq:integrated-X} with  $\sqrt\nu(N_\nu+1)\le1$ and \eqref{eq:bootstrap-gradient} to further obtain
\begin{align}\label{Fprime_estimate}
 \|F_0'\|_{L^2(\Omega)}^2\le C(X^2+Y^2),\qquad
 \nu\int_0^{\sigma_K}\|F_0'\|_{L^2(\Omega)}^2\,d t\le C,
\end{align}
where $C>0$ is a constant. Thanks to \eqref{eq:zero-L2-final}, one has $\|n_0''\|_{L^2_y}^2\ge \Vert n_0'\Vert_{L^2_y}^2/R_0$.
Then, combining this with (\ref{Fprime_estimate}), we have from \eqref{eq:zero-H1-energy}  that
\begin{equation}\label{eq:zero-H1-final}
 \sup_{t<\sigma_K}\|n_0'(t)\|_{L^2_y}\le C.
\end{equation}
Now we collect \eqref{eq:zero-L2-final}, \eqref{eq:zero-H1-final} and \eqref{eq:L4-final} to get
\[
\sup_{t<\sigma_K}
\left(
\|n_0(t)\|_{H^1_y}+\|n(t)\|_{L^4(\Omega)}
\right)
\le
C,
\]
where $C$ is independent of \(K\). By fixing \(K\) so that
\[
K>
\max\left\{
1,\,
\|n_0(0)\|_{H^1_y}+\|n_{\rm in}\|_{L^4(\Omega)},\,
\frac{C}{2}
\right\}.
\]
Then, we choose
\(\nu_0=\nu_0(K)>0\) sufficiently small such that when $0<\nu<\nu_0$, on \(G_\nu\),
\begin{equation*}
\sup_{t<\sigma_K}
\left(
\|n_0(t)\|_{H^1_y}+\|n(t)\|_{L^4(\Omega)}
\right)
\le C<2K,
\end{equation*}
which is (\ref{eq:half-control}).

We finally show that $\sigma_K=\tau_*=\infty$.  If \(\sigma_K<\tau_*\), continuity and the definition of \(\sigma_K\)
would imply $\|n_0(\sigma_K)\|_{H^1_y}+\|n(\sigma_K)\|_{L^4(\Omega)}=4K.$
On the other hand, letting \(t\uparrow\sigma_K\) in
\eqref{eq:half-control} gives $\|n_0(\sigma_K)\|_{H^1_y}+\|n(\sigma_K)\|_{L^4(\Omega)}
\le2K,$
which is impossible. Therefore \(\sigma_K=\tau_*\).  In addition, if \(\tau_*<\infty\), then \eqref{eq:infty-final} gives $\sup_{t<\tau_*}\|n(t)\|_\infty\le B_\infty<\infty,$
contradicting (\ref{eq:Linfty-blowup-local}). Hence $\sigma_K=\tau_*=\infty.$

\end{proof}

We are ready to finish the proof of Theorem~\ref{thm:global-original}, which is
\begin{proof}[Proof of Theorem~\ref{thm:global-original}]
Take $A_0=\lceil\nu_0^{-1}\rceil$ with $\nu_0$ from
Proposition~\ref{prop:bootstrap}, and set $G_{\nu}=G_{A^{-1}}$, where $A$ is given in (\ref{eq:KS-local}) and $G_{\nu}$ is defined in (\ref{G_nu_def}).
On $G_{A^{-1}}$, by choosing $N_{\nu}=\lceil\nu^{-\beta}\rceil$ for $\beta\in(0,\frac{1}{4}),$ we have 
$\nu^{-\beta}\le N_\nu<\nu^{-\beta}+1$ and $e^{-c(x-N_\nu)_+}\le e^c e^{-c(x-\nu^{-\beta})_+}$, then thanks to Lemma \ref{prop:bootstrap}, we obtain the maximal local-in-time solution constructed in Theorem \ref{thm:local-complete} satisfies 
\eqref{eq:bootstrap-decay}--\eqref{eq:bootstrap-infty} and hence exists globally in time.  In addition, by using \eqref{eq:exceptional-event}, one has
\[
 \sum_{A=A_0}^{\infty}\mathbb P(G_{A^{-1}}^c)
       \le C\sum_{A=A_0}^{\infty}e^{-cA^\beta}<\infty.
\]
Define $\Omega_{\rm BC}
:=
\left\{
\omega:
\omega\notin G_{A^{-1}}
\text{ for only finitely many integers }A\ge A_0
\right\}.$
By the first Borel--Cantelli lemma, $\mathbb P(\Omega_{\rm BC})=1.$  Then, for every \(\omega\in\Omega_{\rm BC}\), define
\[
A_*(\omega)
=
1+\max\left(
\{A\ge A_0, A\in\mathbb N:\omega\notin G_{A^{-1}}\}\cup\{A_0-1\}
\right).
\]
For every \(m\ge A_0\), $\{A_*\le m\}
=
\bigcap_{A=m}^{\infty}G_{A^{-1}}\in\mathcal F,$
so \(A_*\) is measurable. Moreover, $\{A_*<\infty\}
=
\bigcup_{m=A_0}^{\infty}
\bigcap_{A=m}^{\infty}G_{A^{-1}}
=
\Omega_{\rm BC}.$
Hence, $\mathbb P(A_*<\infty)=1.$ More precisely, for every \(\omega\in\Omega_{\rm BC}\), $A_*(\omega)<\infty$ and  $\omega\in G_{A^{-1}}$ {for every integer }$A\ge A_*(\omega)$.
Lemma~\ref{prop:bootstrap} therefore gives all the asserted
conclusions for every such \(A\).  This proves the theorem.
\end{proof}

\section{Conclusion}
In this paper, we have investigated the enhanced dissipation mechanism induced by stochastic Couette flow in Keller--Segel models. In two dimensions, we have shown that, almost surely, \eqref{eq1} admits a unique global mild solution for sufficiently large \(A\).   We emphasize that, in contrast to the high-probability delay of blow-up established in \cite{FlandoliGaleatiLuo}, we obtain almost-sure suppression of blow-up for the two-dimensional Keller–Segel equation driven by stochastic Couette flow. The main technical novelty of our proof lies in combining pathwise estimates for solutions of the passive scalar equation with a decomposition into time blocks, rather than relying solely on estimates in expectation.

Several open problems remain for future investigation. A natural question is whether our results for stochastic Couette flow extend to more general stochastic shear flows, in analogy with the deterministic setting studied by Bedrossian and He \cite{BedrossianHe2017}. Other directions include extensions to higher dimensions and the study of coupled chemotaxis–fluid systems in which the fluid velocity is governed by the Navier–Stokes equations.
\bibliographystyle{abbrv}
\bibliography{ref}

@misc{CotiZelatiHairerVillringer2025,
  title         = {A Stochastic {RAGE} Theorem and Enhanced Dissipation
                   for Transport Noise},
  author        = {Coti Zelati, Michele and Hairer, Martin and
                   Villringer, David},
  year          = {2025},
  eprint        = {2507.11422},
  archivePrefix = {arXiv},
  primaryClass  = {math.AP},
  doi           = {10.48550/arXiv.2507.11422},
  url           = {https://arxiv.org/abs/2507.11422}
}

@article{SimingHe,
author = {Y. Gong and S. He},
title = {On the $8\pi$-Critical-Mass Threshold of a {P}atlak--{K}eller--{S}egel--{N}avier--{S}tokes System},
journal = {SIAM J. Math. Anal.},
volume = {53},
number = {3},
pages = {2925-2956},
year = {2021},
}

@article{CastroLear2023,
  author  = {Castro, {\'A}ngel and Lear, Daniel},
  title   = {Traveling Waves Near {Couette} Flow for the {2D Euler} Equation},
  journal = {Comm. Math. Phys.},
  year    = {2023},
  volume  = {400},
  pages   = {2005--2079},
  doi     = {10.1007/s00220-023-04636-6}
}

@article{FlandoliGaleatiLuo,
  author  = {F. Flandoli and L. Galeati and D. Luo},
  title   = {Delayed blow-up by transport noise},
  journal = {Comm. Partial Differential Equations},
  volume  = {46},
  number  = {9},
  pages   = {1757--1788},
  year    = {2021},
  url     = {https://arxiv.org/abs/2009.13005}
}

@article{Keller1970,
  title={Initiation of slime mold aggregation viewed as an instability},
  author={E. Keller and L. Segel},
  journal={J. Theor. Biol.},
  volume={26},
  number={3},
  pages={399--415},
  year={1970},
  publisher={Elsevier}
}

@article{Keller1971,
  title={Model for chemotaxis},
  author={E. Keller and L. Segel},
  journal={J. Theoret. Biol.},
  volume={30},
  number={2},
  pages={225--234},
  year={1971},
  publisher={Elsevier}
}

@article{patlak1953random,
  title={Random walk with persistence and external bias},
  author={C. Patlak},
  journal={Bull. Math. Biophys.},
  volume={15},
  number={3},
  pages={311--338},
  year={1953},
  publisher={Springer}
}

@article{kong2024global,
  title={Global existence and aggregation of chemotaxis--fluid systems in dimension two},
  author={F. Kong  and C. Lai and J. Wei},
  journal={J. Differential Equations},
  volume={400},
  pages={1--89},
  year={2024},
  publisher={Elsevier}
}

@article{Horstmann2003,
  title={From 1970 until present: the {K}eller-{S}egel model in chemotaxis and its consequences I},
  author={D. Horstmann},
  journal={Jahresber Deutsch. Math.-Verein.},
  volume={105},
  pages={103--165},
  year={2003}
}

@article{horstmann2004,
  title={From 1970 until present: the {K}eller-{S}egel model in chemotaxis and its consequences II},
  author={D. Horstmann},
  journal={Jahresber Deutsch. Math.-Verein.},
  volume={106},
  pages={51--69},
  year={2004}
}

@article{childress1981,
  title={Nonlinear aspects of chemotaxis},
  author={S. Childress and K. Percus},
  journal={Math. Biosci.},
  volume={56},
  number={3-4},
  pages={217--237},
  year={1981},
  publisher={Elsevier}
}

@article{nanjundiah1973,
  title={Chemotaxis, signal relaying and aggregation morphology},
  author={V. Nanjundiah},
  journal={J. Theor. Biol.},
  volume={42},
  number={1},
  pages={63--105},
  year={1973},
  publisher={Elsevier}
}

@article{herrero1996,
  title={Chemotactic collapse for the {K}eller-{S}egel model},
  author={M. Herrero and J. Vel{\'a}zquez},
  journal={J. Math. Biol.},
  volume={35},
  number={2},
  pages={177--194},
  year={1996},
  publisher={Springer}
}

@article {hillen2009user,
    AUTHOR = {Hillen, T. and Painter, K. J.},
     TITLE = {A user's guide to {PDE} models for chemotaxis},
   JOURNAL = {J. Math. Biol.},
  FJOURNAL = {Journal of Mathematical Biology},
    VOLUME = {58},
      YEAR = {2009},
    NUMBER = {1-2},
     PAGES = {183--217},
      ISSN = {0303-6812,1432-1416},
   MRCLASS = {92C17 (35K57)},
  MRNUMBER = {2448428},
       DOI = {10.1007/s00285-008-0201-3},
       URL = {https://doi.org/10.1007/s00285-008-0201-3},
}

@article{senba2000some,
  title={Some structures of the solution set for a stationary system of chemotaxis},
  author={T. Senba and T. Suzuki},
  journal={Adv. Math. Sci. Appl.},
  volume={10},
  number={1},
  pages={191--224},
  year={2000}
}

@article{wang2002steady,
  title={{Steady state solutions of a reaction-diffusion system modeling chemotaxis}},
  author={G. Wang and J. Wei},
  journal={Math. Nachr.},
  volume={233},
  number={1},
  pages={221--236},
  year={2002},
  publisher={Wiley Online Library}
}

@article{winkler2014stabilization,
  title={{S}tabilization in a two-dimensional chemotaxis-{N}avier-{S}tokes system},
  author={M. Winkler},
  journal={Arch. Ration. Mech. Anal.},
  volume={211},
  pages={455--487},
  year={2014},
  publisher={Springer}
}

@article{zhai20202d,
  title={{2D} stochastic chemotaxis-{N}avier-{S}tokes system},
  author={J. Zhai and T. Zhang},
  journal={J. Math. Pures Appl.},
  volume={138},
  pages={307--355},
  year={2020},
  publisher={Elsevier}
}

@article{zhang2025keller,
  title={On the {K}eller-{S}egel models interacting with a stochastically forced incompressible viscous flow in {$R^2$}},
  author={L. Zhang and B. Liu},
  journal={J. Differential Equations},
  volume={414},
  pages={487--554},
  year={2025},
  publisher={Elsevier}
}

@article{kong2026global,
  title={Global Well-posedness of the {2D} Stochastic Self-consistent {K}eller-{S}egel-{N}avier-{S}tokes System with Subcritical Cellular Mass},
  author={F. Kong and C. Lai and K. Tawri},
  journal={arXiv preprint arXiv:2605.17114},
  year={2026}
}

@article{duan2010global,
  title={Global solutions to the coupled chemotaxis-fluid equations},
  author={R. Duan and A. Lorz and P. Markowich},
  journal={Commun. Partial Differ. Equ.},
  volume={35},
  number={9},
  pages={1635--1673},
  year={2010},
  publisher={Taylor \& Francis}
}

@article{BedrossianHe2017,
  title={Suppression of blow-up in {P}atlak--{K}eller--{S}egel via shear flows},
  author={J. Bedrossian and S. He},
  journal={SIAM J. Math. Anal.},
  volume={49},
  number={6},
  pages={4722--4766},
  year={2017},
  publisher={SIAM}
}

@article{dolbeault2004optimal,
  title={Optimal critical mass in the two dimensional {K}eller--{S}egel model in ${R^2}$},
  author={J. Dolbeault and B. Perthame },
  journal={C. R. Math. Acad. Sci. Paris},
  volume={339},
  number={9},
  pages={611--616},
  year={2004},
  publisher={Elsevier}
}

@article {blanchet2006two,
    AUTHOR = {Blanchet, A. and Dolbeault, J. and Perthame, B.},
     TITLE = {Two-dimensional {K}eller-{S}egel model: optimal critical mass
              and qualitative properties of the solutions},
   JOURNAL = {Electron. J. Differential Equations},
  FJOURNAL = {Electronic Journal of Differential Equations},
      YEAR = {2006},
     PAGES = {No. 44, 32},
      ISSN = {1072-6691},
   MRCLASS = {35Q80 (35B45 35D05 92C17)},
  MRNUMBER = {2226917},
MRREVIEWER = {Roberto\ Natalini},
}

@article{biler20068pi,
  title={{The 8$\pi$-problem for radially symmetric solutions of a chemotaxis model in the plane}},
  author={P. Biler and G. Karch and P. Lauren{\c{c}}ot and T. Nadzieja},
  journal={Math. Models Methods Appl. Sci.},
  volume={29},
  number={13},
  pages={1563--1583},
  year={2006},
  publisher={Wiley Online Library}
}

@article{velazquez2004point,
  title={{Point dynamics in a singular limit of the {K}eller--{S}egel model I: Motion of the concentration regions}},
  author={J. Vel{\'a}zquez},
  journal={SIAM J. Appl. Math.},
  volume={64},
  number={4},
  pages={1198--1223},
  year={2004},
  publisher={SIAM}
}

@article{blanchet2008infinite,
  title={{Infinite time aggregation for the critical {P}atlak-{K}eller-{S}egel model in {$\mathbb R^2$}}},
  author={A. Blanchet and J. Carrillo and N. Masmoudi},
  journal={Commun. Pure Appl. Math.},
  volume={61},
  number={10},
  pages={1449--1481},
  year={2008},
  publisher={Wiley Online Library}
}

@article{davila2020existence,
  title={{Existence and stability of infinite time blow-up in the {K}eller-{S}egel system}},
  author={J. Davila and M. del Pino and J. Dolbeault and M. Musso and J. Wei},
  journal={Arch. Ration. Mech. Anal.},
  volume={248},
  number={4},
  pages={61},
  year={2024},
  publisher={Springer}
}

@article{KiselevXu2016,
  title={Suppression of chemotactic explosion by mixing},
  author={A. Kiselev  and  X. Xu },
  journal={Arch. Ration. Mech. Anal.},
  volume={222},
  number={2},
  pages={1077--1112},
  year={2016},
  publisher={Springer}
}

@article{he2023enhanced,
  title={Enhanced dissipation and blow-up suppression in a chemotaxis-fluid system},
  author={S. He},
  journal={SIAM J. Math. Anal.},
  volume={55},
  number={4},
  pages={2615--2643},
  year={2023},
  publisher={SIAM}
}

\end{document}